\documentclass{siamltex}
\usepackage{graphicx} 
\usepackage{hyperref} 
\usepackage{amsmath,amssymb,verbatim} 
\usepackage{placeins} 
\usepackage{color}
\usepackage{cite}
\usepackage{subfigure}
\usepackage{hyperref}

\newcommand{\Ja}{J_{\alpha}}
\newcommand{\Phm}{\Phi_{\mu}}
\newcommand{\Phn}{\Phi_{\nu}}
\newcommand{\bbE}{\mathbb E}

\definecolor{darkred}{rgb}{.7,0,0}

\definecolor{darkgreen}{rgb}{0,0.5,0}

\definecolor{darkblue}{rgb}{0,0,0.7}

\definecolor{darkred}{rgb}{0.9,0.1,0.1}

\DeclareMathOperator{\spa}{span}

\DeclareMathOperator{\bigo}{O}

\DeclareMathOperator{\Erf}{Erf}
\DeclareMathOperator{\Erfc}{Erfc}

\newtheorem{algorithm}[theorem]{Algorithm}

\numberwithin{equation}{section}

%%%%%
% Curly letters
%%%%%
\newcommand{\cA}{\mathcal A}

\newcommand{\cG}{\mathcal{G}}
\newcommand{\cH}{\mathcal{H}}

%%%%%
% Matbb letters
%%%%%
\newcommand{\E}{{\mathbb E}}
\newcommand{\bbP}{{\mathbb P}}

\newcommand{\R}{\mathbb{R}}

%%%%%
% Greek letters
%%%%%
\newcommand{\eps}{\varepsilon}

%%%%%
% Convenient shortcuts
%%%%%

\newcommand{\Dkl}{D_{{\rm KL}}}

\newcommand{\Dnm}{\Dkl(\nu \| \mu)}

\newcommand{\HS}{\mathcal{HS}(\cH)}

\newcommand{\Cni}{C_0^{-1}}

\newcommand{\Hg}{\cH_K}
\newcommand{\la}{\langle}
\newcommand{\ra}{\rangle}

\newcommand{\pg}{P}

\renewcommand{\epsilon}{\eps}

\title{Algorithms for Kullback-Leibler Approximation of Probability Measures in
Infinite Dimensions}
 
\author{F.~J.~Pinski\footnotemark[2] \and G.~Simpson\footnotemark[3]
  \and A.~M.~Stuart\footnotemark[4] \and H.~Weber\footnotemark[4]
\footnotetext[2]{Department of Physics, University of Cincinnati, 
  Cincinnati, OH 45221, USA}
\footnotetext[3]{Department of Mathematics, Drexel University,
  Philadelphia, PA 19104, USA}
\footnotetext[4]{Mathematics Institute, University of Warwick,
  Coventry CV4 7AL, UK}
}

\begin{document}

\maketitle

\begin{abstract} 
  In this paper we study algorithms to find a Gaussian approximation
  to a target measure defined on a Hilbert space of functions; the
  target measure itself is defined via its density with respect to a
  reference Gaussian measure. We employ the Kullback-Leibler
  divergence as a distance and find the best Gaussian
  approximation by minimizing this distance. It then follows that the
  approximate Gaussian must be equivalent to the Gaussian reference
  measure, defining a natural function space setting for the
  underlying calculus of variations problem. We introduce a
  computational algorithm which is well-adapted to the required
  minimization, seeking to find the mean as a function, and
  parameterizing the covariance in two different ways: through low
  rank perturbations of the reference covariance; and through
  Schr\"odinger potential perturbations of the inverse reference
  covariance. Two applications are shown: to a nonlinear inverse
  problem in elliptic PDEs, and to a conditioned diffusion process. We
  also show how the Gaussian approximations we obtain may be used to
  produce improved pCN-MCMC methods which are not only well-adapted to
  the high-dimensional setting, but also behave well with respect to
  small observational noise (resp. small temperatures) in the inverse
  problem (resp. conditioned diffusion).
\end{abstract}

\section{Introduction}
\label{sec:I}

Probability measures on infinite dimensional spaces arise in a variety
of applications, including the Bayesian approach to inverse problems
\cite{S10a} and conditioned diffusion processes
\cite{HSV11}. Obtaining quantitative information from such problems is
computationally intensive, requiring approximation of the infinite
dimensional space on which the measures live. We present a
computational approach applicable to this context: we demonstrate a
methodology for computing the best approximation to the measure, from
within a subclass of Gaussians. In addition we show how this best
Gaussian approximation may be used to speed-up Monte Carlo-Markov
chain (MCMC) sampling. The measure of ``best'' is taken to be the
Kullback-Leibler (KL) divergence, or relative entropy, a methodology
widely adopted in machine learning applications
\cite{bishop2006pattern}. In the recent paper~\cite{PSSW13},
KL-approximation by Gaussians was studied using the calculus of
variations. The theory from that paper provides the mathematical
underpinnings for the algorithms presented here.

\subsection{Abstract Framework}

Assume we are given a measure $\mu$ on the separable Hilbert space
$(\cH, \la \cdot , \cdot \ra, \, \| \cdot \|)$ equipped with the Borel
$\sigma$-algebra, specified by its density with respect to a reference
measure $\mu_0$. We wish to find the closest element $\nu$ to $\mu$,
with respect to KL divergence, from a subset $\cA$ of the Gaussian
probability measures on $\cH$. We assume the reference measure $\mu_0$
is itself a Gaussian $\mu_0=N(m_0,C_0)$ on $\cH$.  The measure $\mu$
is thus defined by
\begin{equation}
  \frac{d \mu}{ d \mu_0} (u)= \frac{1}{Z_\mu} \exp \big( - \Phm(u)\big), \label{e:target}
\end{equation}
where we assume that $\Phm:X \to \R$ is continuous on some Banach
space $X$ of full measure with respect to $\mu_0$, and that
$\exp(-\Phm(x))$ is integrable with respect to $\mu_0$. Furthermore,
$Z_\mu =\E^{\mu_0} \exp \big( - \Phm(u)\big)$ ensuring that $\mu$ is
indeed a {\em probability} measure.  We seek an approximation
$\nu=N(m,C)$ of $\mu$ which minimizes $\Dnm$, the KL divergence
between $\nu$ and $\mu$ in $\cA$. Under these assumptions it is
necessarily the case that $\nu$ is equivalent\footnote{Two measures
  are equivalent if they are mutually absolutely continuous.}  to
$\mu_0$ (we write $\nu \sim \mu_0$) since otherwise $\Dnm=\infty.$
This imposes restrictions on the pair $(m,C)$, and we build these
restrictions into our algorithms.  Broadly speaking, we will seek to
minimize over {\em all} sufficiently regular functions $m$, whilst we
will parameterize $C$ either through operators of finite rank, or
through a function appearing as a potential in an inverse covariance
representation.

Once we have found the best Gaussian approximation we will use this to
improve upon known MCMC methods.  Here, we adopt the perspective of
considering only MCMC methods that are well-defined in the
infinite-dimensional setting, so that they are robust to
finite-dimensional approximation \cite{CRSW13}. The best Gaussian
approximation is used to make Gaussian proposals within MCMC which are
simple to implement, yet which contain sufficient information about
$\Phm$ to yield significant reduction in the autocovariance of the
resulting Markov chain, when compared with the methods developed in
\cite{CRSW13}.

\subsection{Relation to Previous Work}

In addition to the machine learning applications mentioned above
\cite{bishop2006pattern}, approximation with respect to KL divergence
has been used in a variety of applications in the physical sciences,
including climate science \cite{Gershgorin:2012hu}, coarse graining
for molecular dynamics \cite{katsoulakis2013information,Shell:2008cj}
and data assimilation \cite{archambeau2007gaussian}.

On the other hand, improving the efficiency of MCMC algorithms is a
topic attracting a great deal of current interest, as many important
PDE based inverse problems result in target distributions $\mu$ for
which $\Phi_\mu$ is computationally expensive to evaluate.  In
\cite{Martin:2012fj}, the authors develop a stochastic Newton MCMC
algorithm, which resembles our improved pCN-MCMC Algorithm \ref{a2} in
that it uses Gaussian approximations that are adapted to the problem
within the proposal distributions.  However, while we seek to find
minimizers of KL in an offline computation, the work in
\cite{Martin:2012fj} makes a quadratic approximation of $\Phi_\mu$ at
each step along the MCMC sequence; in this sense it has similarities
with the Riemannian Manifold MCMC methods of
\cite{girolami2011riemann}.

As will become apparent, a serious question is how to characterize,
numerically, the covariance operator of the Gaussian measure $\nu$.
Recognizing that the covariance operator is compact, with decaying
spectrum, it may be well-approximated by a low rank matrix. Low rank
approximations are used in \cite{Martin:2012fj,Spantini:2014tb}, and
in the earlier work \cite{Flath:2011gm}.  In \cite{Flath:2011gm} the
authors discuss how, even in the case where $\mu$ is itself Gaussian,
there are significant computational challenges motivating the low rank
methodology.

Other active areas in MCMC methods for high dimensional problems
include the use of polynomial chaos expansions for proposals
\cite{Marzouk:2007vi}, and local interpolation of $\Phi_\mu$ to reduce
computational costs \cite{Conrad:2014vc}.  For methods which go beyond
MCMC, we mention the paper \cite{ElMoselhy:2012hn} in which the
authors present an algorithm for solving the optimal transport PDE
relating $\mu_0$ to $\mu$.

\subsection{Outline}

In Section \ref{s:scalar_example}, we examine these algorithms in the
context of a scalar problem, motivating many of our ideas.  The
general methodology is introduced in Section \ref{sec:G}, where we
describe the approximation of $\mu$ defined via \eqref{e:target} by a
Gaussian, summarizing the calculus of variations framework which
underpins our algorithms. We describe the problem of Gaussian
approximations in general, and then consider two specific
paramaterizations of the covariance which are useful in practice, the
first via finite rank perturbation of the covariance of the reference
measure $\mu_0$, and the second via a Schr\"odinger potential shift
from the inverse covariance of $\mu_0$. Section \ref{sec:RM} describes
the structure of the Euler-Lagrange equations for minimization, and
recalls the Robbins-Monro algorithm for locating the zeros of
functions defined via an expectation. In Section \ref{sec:MCMC} we
describe how the Gaussian approximation found via KL minimization can
be used as the basis for new MCMC methods, well-defined on function
space and hence robust to discretization, but also taking into account
the change of measure via the best Gaussian approximation.  Section
\ref{sec:N} contains illustrative numerical results, for a Bayesian
inverse problem arising in a model of groundwater flow, and in a
conditioned diffusion process, prototypical of problems in molecular
dynamics. We conclude in Section \ref{sec:C}.

\section{Scalar Example}
\label{s:scalar_example}

The main challenges and ideas of this work can be exemplified in a
scalar problem, which we examine here as motivation. Consider the measure $\mu^\eps$ defined via its density
with respect to Lebesgue measure:
\begin{equation}
  \label{e:scalar_dist}
  \mu^\eps(dx) = \frac{1}{Z_\eps} \exp\left(-\eps^{-1}V(x)\right)dx, \quad V:\R \to \R.
\end{equation}
$\eps>0$ is a small parameter.  Furthermore, let the potential $V$ be
such that $\mu^\eps$ is non-Gaussian.  As a concrete example, take
\begin{equation}
  \label{e:scalar_potential}
  V(x) =  x^4 + \tfrac{1}{2}x^2. 
\end{equation}
We now explain our ideas in the context of this simple example,
referring to algorithms which are detailed later; additional details
are given in Section \ref{s:scalar}.

In order to link to the infinite dimensional setting, where Lebesgue
measure is not defined and Gaussian measure is used as the reference
measure, we write $\mu^\eps$ via its density with respect to a unit
Gaussian $\mu_0 = N(0,1)$:
\[
\frac{d\mu^\eps}{d\mu_0} =\frac{\sqrt{2\pi}}{Z_\eps}
\exp\left(-{\eps}^{-1}V(x)+\tfrac{1}{2}x^2\right).
\]
We find the best fit $\nu = N(m, \sigma^2)$, optimizing $\Dnm$ over
$m\in \R$ and $\sigma>0$, noting that $\nu$ may be written as
\[
\frac{d\nu}{d\mu_0}=
\frac{\sqrt{2\pi}}{\sqrt{2\pi\sigma^2}}\exp\left(-\tfrac{1}{2\sigma^2}(x-m)^2
  +\tfrac{1}{2}x^2 \right).\] The change of measure is then
\begin{equation}
  \frac{d\mu^\eps}{d\nu} = \frac{\sqrt{2\pi \sigma^2}}{Z_\eps}\exp
  \left(-{\eps}^{-1}V(x) + \tfrac{1}{2\sigma^2 }(x-m)^2\right).
\end{equation}
For potential \eqref{e:scalar_potential}, $\Dkl$ can be integrated
analytically, yielding,
\begin{equation}
  \label{e:Dnm_scalar}
  \Dkl(\nu||\mu^\eps) = \tfrac{1}{2}\eps^{-1}\left(2m^4 + m^2 + 12 m^2 \sigma^2 + \sigma^2 +
    6\sigma^4 \right) -\tfrac{1}{2}+ \log{Z_\eps}-\log{\sqrt{2\pi \sigma^2}}.
\end{equation}
In subsection \ref{ss:mins} we illustrate an algorithm to find the
best Gaussian approximation numerically whilst subsection
\ref{ss:mcmcs} demonstrates how this minimizer maybe used to improve
MCMC methods.  Appendix \ref{s:SE} contains further
details of the numerical results, as well as a theoretical analysis of
the improved MCMC method for this problem.

\subsection{Estimation of the Minimizer}
\label{ss:mins}
The Euler-Lagrange equations for \eqref{e:Dnm_scalar} can then be
solved to obtain a minimizer $(m,\sigma)$ which satisfies $m=0$ and
\begin{equation}
  \label{e:sig_scalar}
  \sigma^2 = \tfrac{1}{24}\left(\sqrt{1+48\eps}-1\right)= \eps - 12\eps^2
  + \bigo(\eps^3).
\end{equation}
In more complex problems, $\Dnm$ is not analytically tractable and
only defined via expectation. In this setting, we rely on the
Robbins-Monro algorithm (Algorithm \ref{a:RMforKL}) to compute
solution of the Euler-Lagrange equations defining minimizers.  Figure
\ref{f:scalar_conv} depicts the convergence of the Robbins-Monro
solution towards the desired root at $\eps=0.01$, $(m,\sigma) \approx
(0, 0.0950)$ for our illustrative scalar example. It also shows that
$\Dnm$ is reduced.

\begin{figure}
  \begin{center}
    \subfigure[Convergence of
    $m_n$]{\includegraphics[width=6.25cm]{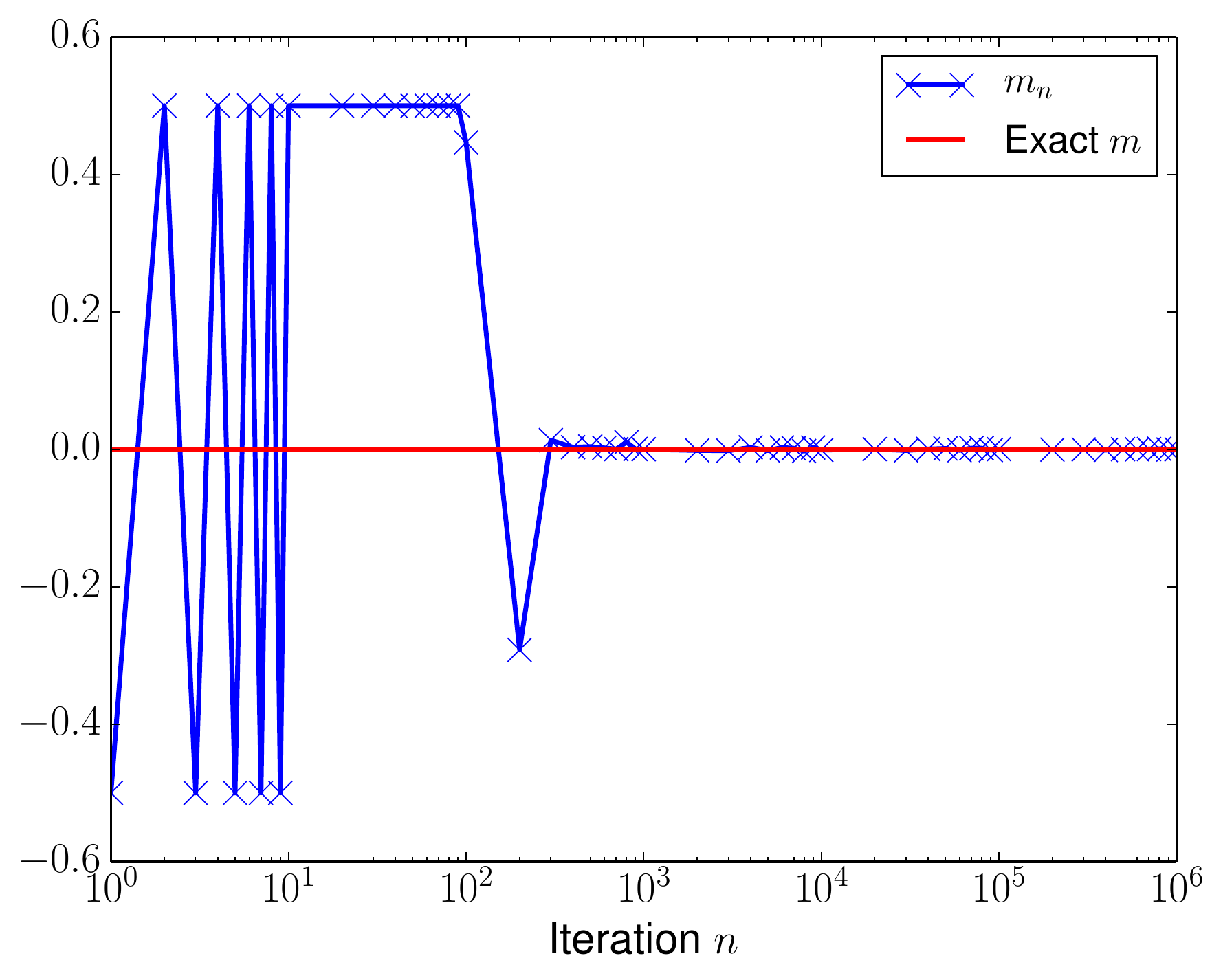}}
    \subfigure[Convergence of
    $\sigma_n$]{\includegraphics[width=6.25cm]{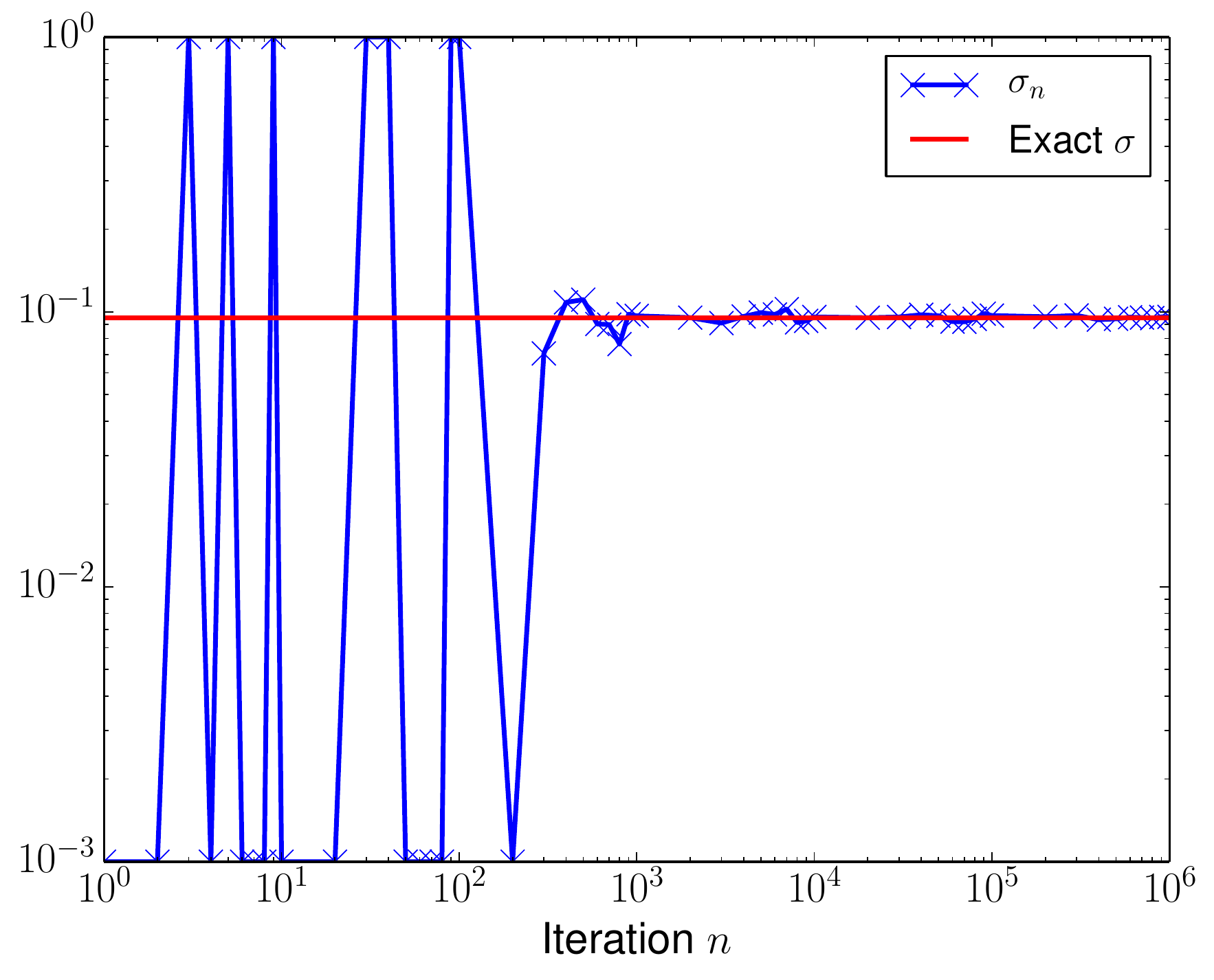}}

    \subfigure[Minimization of
    $\Dkl$]{\includegraphics[width=6.25cm]{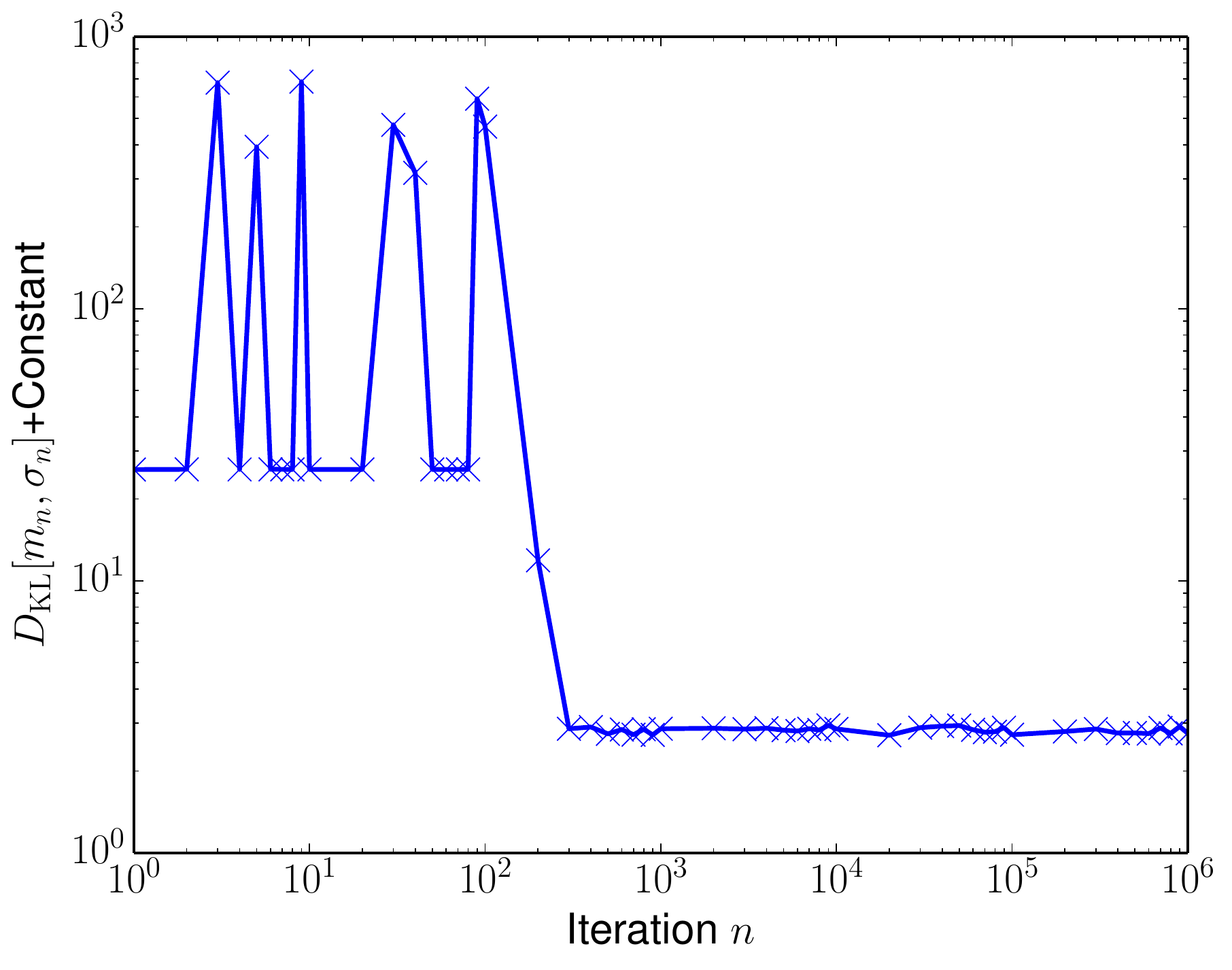}}
  \end{center}
  \caption{Convergence of $m_n$ and $\sigma_n$ towards the values
    found via deterministic root finding for the scalar problem with
    potential \eqref{e:scalar_potential} at $\eps =0.01$.  The
    iterates are generated using Algorithm \ref{a:RMforKL},
    Robbins-Monro applied to KL minimization.  Also plotted are values
    of KL divergence along the iteration sequence.  The true optimal
    value is recovered, and KL divergence is reduced.  To ensure
    convergence, $m_n$ is constrained to $[-.5,.5]$ and $\sigma_n$ is
    constrained to $[10^{-3}, 10^0]$.}
  \label{f:scalar_conv}
\end{figure}

\subsection{Sampling of the Target Distribution}
\label{ss:mcmcs}
Having obtained values of $m$ and $\sigma$ that minimize $\Dnm$, we
may use $\nu$ to develop an improved MCMC sampling algorithm for the
target measure $\mu^\eps$. We compare the performance of the standard
pCN method of Algorithm \ref{a1}, which uses no information about the
best Gaussian fit $\nu$, with the improved pCN Algorithm \ref{a2},
based on knowledge of $\nu.$ The improved performance, gauged by
acceptance rate and autocovariance, is shown in Figure
\ref{f:scalar_pCN_conv}.

\begin{figure}
  \begin{center}
    \subfigure[Acceptance
    Rate]{\includegraphics[width=6.25cm]{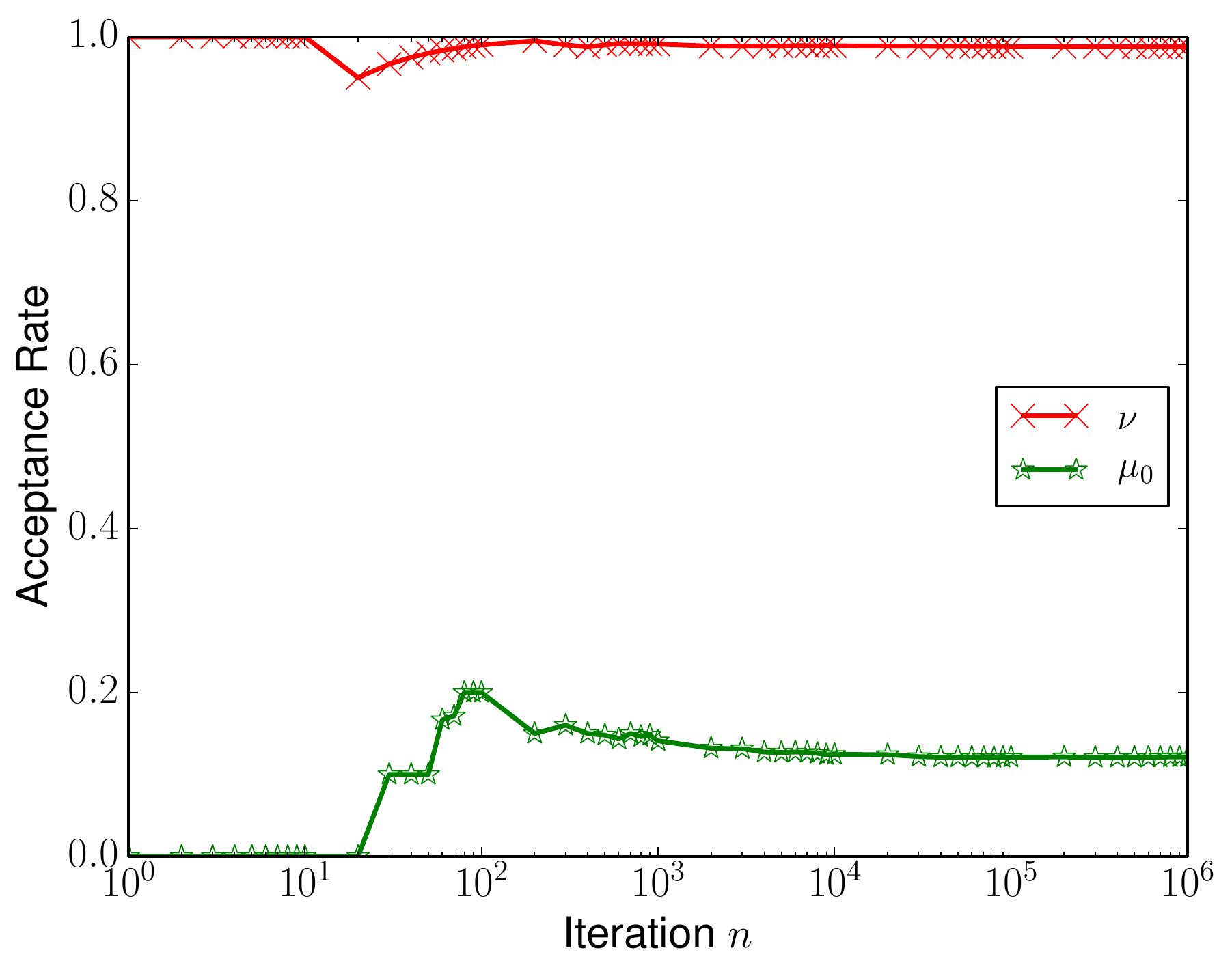}}
    \subfigure[Autocovariance]{\includegraphics[width=6.25cm]{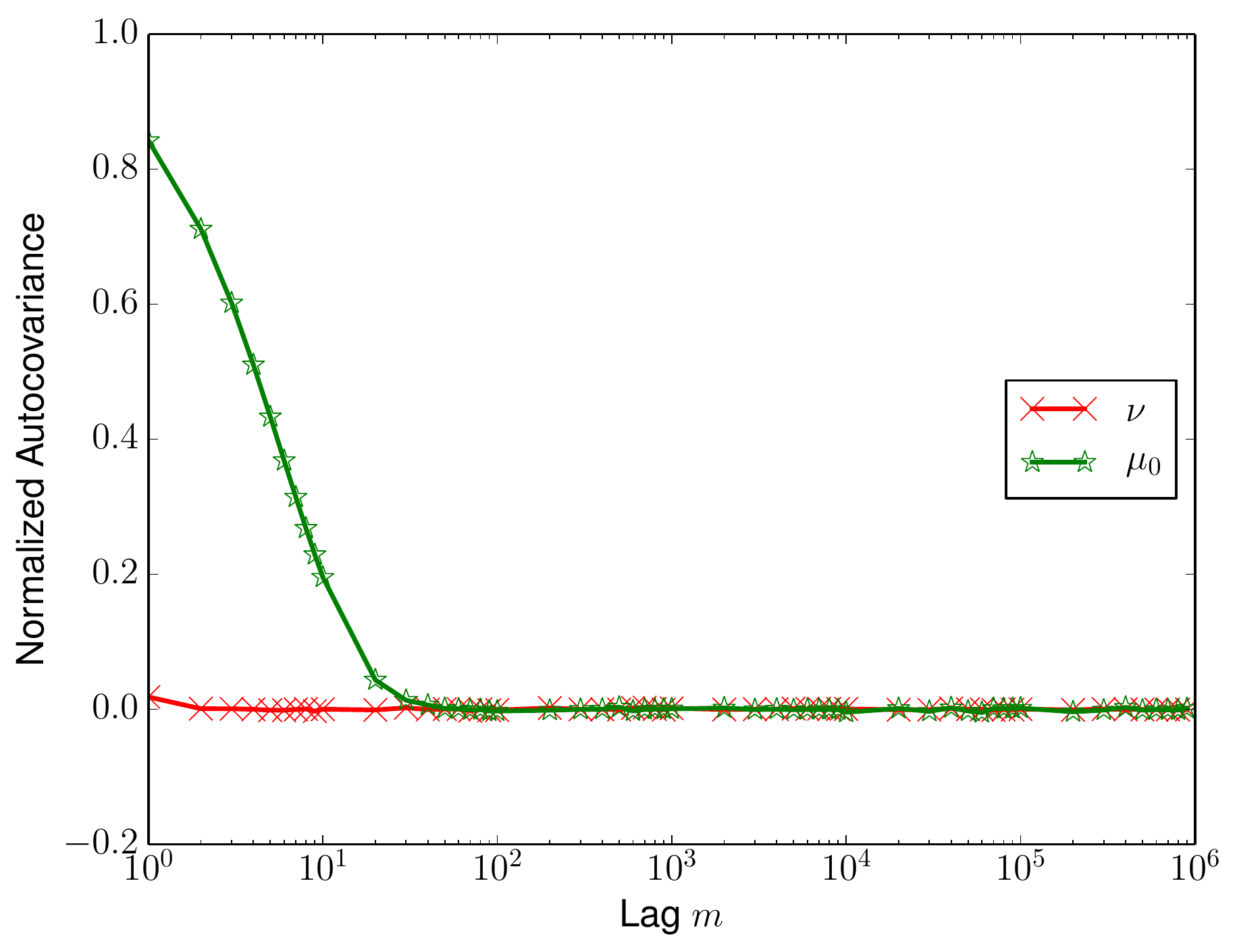}}
  \end{center}
  \caption{Acceptance rates and autocovariances for sampling from
    \eqref{e:scalar_dist} with potential \eqref{e:scalar_potential} at
    $\eps =0.01$.  The curves labeled $\nu$ correspond to the samples
    generated using our improved MCMC, Algorithm \ref{a2}, which uses
    the KL optimized $\nu$ for proposals.  The curves labeled $\mu_0$
    correspond to the samples generated using Algorithm \ref{a1},
    which relies on $\mu_0$ for proposals.  Algorithm \ref{a2} shows
    an order of magnitude improvement over Algorithm \ref{a1}. For
    clarity, only a subset of the data is plotted in the figures.}
  \label{f:scalar_pCN_conv}
\end{figure}

All of this is summarized by Figure \ref{f:scalar_summary}, which
shows the three distributions $\mu^\eps$, $\mu_0$ and KL optimized
$\nu$, together with a histogram generated by samples from the
KL-optimized MCMC Algorithm \ref{a2}.  Clearly, $\nu$ better
characterizes $\mu^\eps$ than $\mu_0$, and this is reflected in the
higher acceptance rate and reduced autocovariance.  Though this is
merely a scalar problem, these ideas are universal.  In all of our
examples, we have a non-Gaussian distribution we wish to sample from,
an uninformed reference measure which gives poor sampling performance,
and an optimized Gaussian which better captures the target measure and
can be used to improve sampling.

\begin{figure}
  \begin{center}
    \includegraphics[width=8cm]{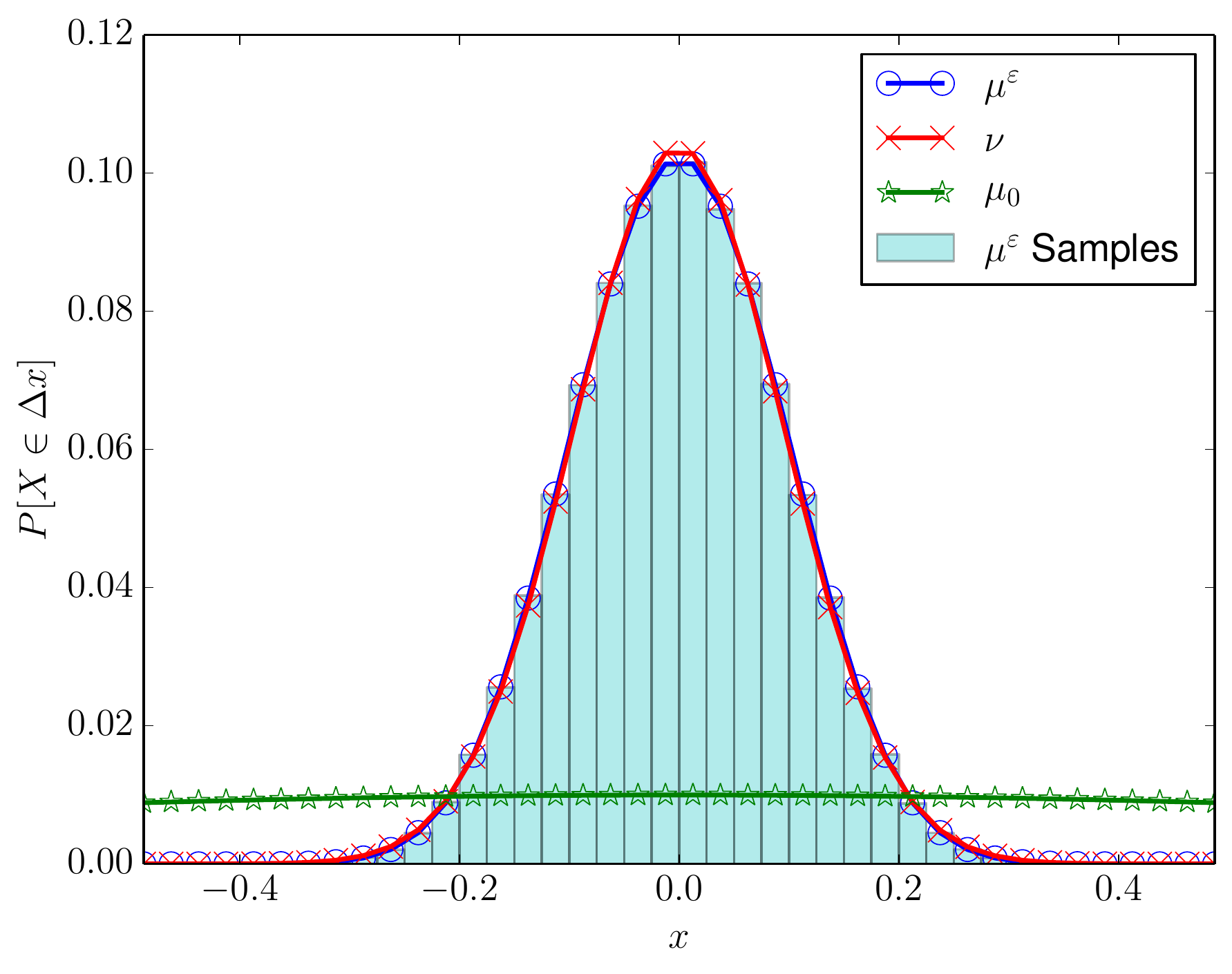}
  \end{center}
  \caption{Distributions of $\mu^\eps$ (target), $\mu_0$ (reference)
    and $\nu$ (KL-optimized Gaussian) for the scalar problem with
    potential \eqref{e:scalar_potential} at $\eps =0.01$.  Posterior
    samples have also been plotted, as a histogram.
    By inspection, $\nu$ better captures $\mu^\eps$, leading to
    improved performance. $\Delta x = 0.025$.}
  \label{f:scalar_summary}
\end{figure}

\section{Parameterized Gaussian Approximations}
\label{sec:G}

We start in subsection \ref{ssec:GS} by describing some general
features of the KL distance. Then in subsection \ref{ssec:GA} we
discuss the case where $\nu$ is Gaussian. Subsections \ref{ssec:FR}
and \ref{ssec:SP} describe two particular parameterizations of the
Gaussian class that we have found useful in practice.

\subsection{General Setting}
\label{ssec:GS}

Let $\nu$ be a measure defined by
\begin{equation}
  \frac{d \nu}{ d \mu_0} (u)= \frac{1}{Z_\nu} \exp \big( - \Phn(u)\big), \label{e:targeta}
\end{equation}
where we assume that $\Phn:X \to \R$ is continuous on $X.$ We aim to
choose the best approximation $\nu$ to $\mu$ given by \eqref{e:target}
from within some class of measures; this class will place restrictions
on the form of $\Phn.$ Our best approximation is found by choosing the
free parameters in $\nu$ to minimize the KL divergence between $\mu$
and $\nu$.  This is defined as
\begin{equation}
  \label{eq:KL}
  \Dkl(\nu\|\mu)=\int_{H}\log\Bigl(\frac{d\nu}{d\mu}(u)\Bigr)\nu(du)
  =\bbE^{\nu} \log\Bigl(\frac{d\nu}{d\mu}(u)\Bigr).
\end{equation}
Recall that $\Dkl(\cdot\|\cdot)$ is not symmetric in its two arguments
and our reason for choosing $\Dkl(\nu\|\mu)$ relates to the
possibility of capturing multiple modes individually; minimizing
$\Dkl(\mu\|\nu)$ corresponds to moment matching in the case where
$\cA$ is the set of all Gaussians \cite{bishop2006pattern,PSSW13}.

Provided $\mu_0 \sim\nu$, we can write
\begin{equation}
  \label{eq:mu2}
  \frac{d\mu}{d\nu}(u)=\frac{Z_{\nu}}{Z_{\mu}}\exp\bigl(-\Delta(u)\bigr),
\end{equation}
where
\begin{equation}
  \label{e:D}
  \Delta(u)=\Phm(u)-\Phn(u).
\end{equation}
Integrating this identity with respect to $\nu$ gives
\begin{equation}
  \label{eq:zm}
  \frac{Z_{\mu}}{Z_{\nu}}
  =\int_{H}\exp\bigl(-\Delta(u)\bigr)\nu(du)
  =\bbE^{\nu}\exp\bigl(-\Delta(u)\bigr).
\end{equation}
Combining \eqref{eq:KL} with \eqref{eq:mu2} and \eqref{eq:zm}, we have
\begin{equation}
  \label{eq:target}
  \Dkl(\nu\|\mu)=\bbE^{\nu} \Delta(u)+\log\Bigl(\bbE^{\nu}
  \exp\bigl(-\Delta(u)\bigr)\Bigr).
\end{equation}
The computational task in this paper is to minimize \eqref{eq:target}
over the parameters that characterize our class of approximating
measures $\cA$, which for us will be subsets of Gaussians.  These
parameters enter $\Phn$ and the normalization constant $Z_{\nu}.$ It
is noteworthy, however, that the normalization constants $Z_{\mu}$ and
$Z_{\nu}$ do not enter this expression for the distance and are hence
not explicitly needed in our algorithms.

To this end, it is useful to find the Euler-Lagrange equations of
\eqref{eq:target}.  Imagine that $\nu$ is parameterized by $\theta$
and that we wish to differentiate $J(\theta):=\Dkl(\nu\|\mu)$ with
respect to $\theta.$ We rewrite $J(\theta)$ as an integral with
respect to $\mu$, rather than $\nu$, differentiate under the integral,
and then convert back to integrals with respect to $\nu.$ From
\eqref{eq:mu2}, we obtain
\begin{equation}
  \label{eq:a}
  \frac{Z_{\nu}}{Z_{\mu}}=\bbE^{\mu} e^{\Delta}.
\end{equation}
Hence, from \eqref{eq:mu2},
\begin{equation}
  \label{eq:b}
  \frac{d\nu}{d\mu}(u)=\frac{e^{\Delta}}{\bbE^{\mu} e^{\Delta}}.
\end{equation}
Thus we obtain, from \eqref{eq:KL},
\begin{equation}
  \label{eq:KL2}
  J(\theta)=
  \bbE^{\mu} \Bigl(\frac{d\nu}{d\mu}(u)\log\Bigl(\frac{d\nu}{d\mu}(u)\Bigr)\Bigr)=\frac{\bbE^{\mu}\bigl(e^{\Delta}(\Delta-\log \bbE^{\mu} e^{\Delta})\bigr)}{\bbE^{\mu} e^{\Delta}},
\end{equation}
and
$$J(\theta)=\frac{\bbE^{\mu}\bigl(e^{\Delta}\Delta\bigr)}{\bbE^{\mu}\bigl(e^{\Delta}\bigr)}-\log \bbE^{\mu} e^{\Delta}.$$
Therefore, with $D$ denoting differentiation with respect to $\theta$,
$$DJ(\theta)=\frac{\bbE^{\mu}\bigl(e^{\Delta}\Delta\, D\Delta\bigr)}{\bbE^{\mu}\bigl(e^{\Delta}\bigr)}-\frac{\bbE^{\mu}\bigl(e^{\Delta}\Delta\bigr)\bbE^{\mu}\bigl(e^{\Delta}D\Delta\bigr)}{\Bigl(\bbE^{\mu}\bigl(e^{\Delta}\bigr)\Bigr)^2}.
$$
Using \eqref{eq:b} we may rewrite this as integration with respect to
$\nu$ and we obtain
\begin{equation}
  \label{e:DJ}
  DJ(\theta)=\bbE^{\nu}(\Delta\,
  D\Delta)-(\bbE^{\nu}\Delta)(\bbE^{\nu}D\Delta).
\end{equation}
Thus, this derivative is zero if and only if $\Delta$ and $D\Delta$
are uncorrelated under $\nu.$

\subsection{Gaussian Approximations}
\label{ssec:GA}

Recall that the reference measure $\mu_0$ is the Gaussian
$N(m_0,C_0).$ We assume that $C_0$ is a strictly positive-definite
trace class operator on $\cH$ \cite{Bog}. We let
$\{e_j,\lambda_j^2\}_{j=1}^\infty$ denote the eigenfunction/eigenvalue
pairs for $C_0$. Positive (resp. negative) fractional powers of $C_0$
are thus defined (resp. densely defined) on $\cH$ by the spectral
theorem and we may define $\cH^1:=D(C_0^{-\frac12})$, the
Cameron-Martin space of measure $\mu_0$. We assume that $m_0 \in
\cH^1$ so that $\mu_0$ is equivalent to $N(0,C_0)$, by the
Cameron-Martin Theorem \cite{Bog}.  We seek to approximate $\mu$ given
in \eqref{e:target} by $\nu \in \cA$, where $\cA$ is a subset of the
Gaussian measures on $\cH$.  It is shown in \cite{PSSW13} that this
implies that $\nu$ is equivalent to $\mu_0$ in the sense of measures
and this in turn implies that $\nu=N(m,C)$ where $m\in E$ and
\begin{equation}\label{e:covariance}
  \Gamma:= C^{-1} - \Cni 
\end{equation}
satisfies
\begin{equation}\label{e:FHH}
  % \big\| \Gamma \big\|_{\mathcal{HS}(\cH^{1},\cH^{-1})}^2 :=
  \big\| C_0^{\frac12} \Gamma C_0^{\frac12} \big\|_{\HS}^2     < \infty;
\end{equation}
here $\HS$ denotes the space of Hilbert-Schmidt operators on $\cH$.

For practical reasons, we do not attempt to recover $\Gamma$ itself,
but instead introduce low dimensional parameterizations.  Two such
parameterizations are introduced in this paper.  In one, we introduce
a finite rank operator, associated with a vector $\phi \in \R^n$.  In
the other, we employ a multiplication operator characterized by a
potential function $b$.  In both cases, the mean $m$ is an element of
$\cH^1$.  Thus minimization will be over either $(m,\phi)$ or $(m,b)$.

In this Gaussian case the expressions for $\Dkl$ and its derivative,
given by equations \eqref{eq:target} and \eqref{e:DJ}, can be
simplified.  Defining
\begin{equation}
  \label{e:Phinu}
  \Phi_\nu(u) = {- \langle u-m,
    m-m_0\rangle_{C_0} + \tfrac{1}{2}\langle u-m, \Gamma(u-m)\rangle -
    \tfrac{1}{2}\|m-m_0\|_{C_0}^2},
\end{equation}
we observe that, assuming $\nu \sim \mu_0$,
\begin{equation}
  \begin{split}
    \frac{d\nu}{d\mu_0}%&\propto \exp\left \{-\tfrac{1}{2}\langle u-m,
    % u-m\rangle_{C} + \tfrac{1}{2}\langle u-m_0, u-m_0\rangle_{C_0}
    % \right\}\\
    & \propto \exp \bigl(- \Phi_\nu(u) \bigr).
  \end{split}
\end{equation}
This may be substituted into the definition of $\Delta$ in
\eqref{e:D}, and used to calculate $J$ and $DJ$ according to
\eqref{eq:KL2} and \eqref{e:DJ}. However, we may derive alternate
expressions as follows. Let $\rho_0 = N(0, C_0)$, the centered version
of $\mu_0$, and $\nu_0=M(0,C)$ the centered version of $\nu.$ Then,
using the Cameron-Martin formula,
\begin{equation}
  \label{e:Znu0}
  Z_\nu = \bbE^{\mu_0}\exp({-\Phi_\nu}) = \bbE^{\rho_0}\exp({-\Phi_{\nu_0}})  =\Bigl( \bbE^{\nu_0}\exp({\Phi_{\nu_0}})\Bigr)^{-1}=Z_{\nu_0},
\end{equation}
where
\begin{equation}
  \label{e:Phi0}
  \Phi_{\nu_0} =\tfrac{1}{2}\langle u, \Gamma u\rangle.
\end{equation}
We also define a reduced $\Delta$ function which will play a role in
our computations:
\begin{equation}
  \label{e:D0}
  \Delta_0(u) \equiv{\Phi_\mu(u+m) - \tfrac{1}{2}\langle u,
    \Gamma
    u\rangle}.
\end{equation}
The consequence of these calculations is that, in the Gaussian case,
\eqref{eq:target} is
\begin{equation}
  \label{e:dkl_gauss}
  \begin{split}
    \Dkl(\nu||\mu) &= \bbE^{\nu}\Delta - \log Z_{\nu_0} + \log Z_\mu\\
    & =\bbE^{\nu_0}[\Delta_0] + \tfrac{1}{2}\| m - m_0\|_{C_0}^2 +
    \log \bbE^{\nu_0}\exp(\tfrac{1}{2}\langle u, \Gamma u\rangle)+
    \log Z_\mu.
  \end{split}
\end{equation}
Although the normalization constant $Z_\mu$ now enters the expression
for the objective function, it is irrelevant in the minimization since
it does not depend on the unknown parameters in $\nu$.  To better see
the connection between \eqref{eq:target} and \eqref{e:dkl_gauss}, note
that
\begin{equation}
  \frac{Z_\mu}{Z_{\nu_0}}=\frac{Z_\mu}{Z_{\nu}} =
  \frac{\E^{\mu_0}\exp(-\Phi_\mu)}{\E^{\mu_0}\exp(-\Phi_\nu)} = \E^{\nu}\exp(-\Delta).
\end{equation}

Working with \eqref{e:dkl_gauss}, the Euler-Lagrange equations to be
solved are:
\begin{subequations}
  \label{e:DJ_Gauss}
  \begin{align}
    \label{e:DJ_Gauss_m}
    D_mJ(m,\theta) &= \bbE^{\nu_0}D_u\Phi_\mu(u+m)  + C_0^{-1} (m-m_0),\\
    \label{e:DJ_Gauss_C}
    D_\theta J(m,\theta) & = \bbE^{\nu_0}(\Delta_0\,
    D_\theta\Delta_0)-(\bbE^{\nu_0}\Delta_0)(\bbE^{\nu_0}D_\theta\Delta_0).
  \end{align}
\end{subequations}
Here, $\theta$ is any of the parameters that define the covariance
operator $C$ of the Gaussian $\nu$.  Equation \eqref{e:DJ_Gauss_m} is
obtained by direct differentiation of \eqref{e:dkl_gauss}, while
\eqref{e:DJ_Gauss_C} is obtained in the same way as \eqref{e:DJ}.
These expressions are simpler for computations for two reasons.
First, for the variation in the mean, we do not need the full
covariance expression of \eqref{e:DJ}.  Second, $\Delta_0$ has fewer
terms to compute.

% \bbE^{\nu}(\Delta\, D\Delta)-(\bbE^{\nu}\Delta)(\bbE^{\nu}D\Delta)
\subsection{Finite Rank Parameterization}
\label{ssec:FR}

Let $\pg$ denote orthogonal projection onto $\Hg := \spa\{e_1, \ldots
, e_K\}$ the span of the first $K$ eigenvectors of $C_0$ and define
$Q=I-P.$ We then parameterize the covariance $C$ of $\nu$ in the form
\begin{equation}
  \label{e:lowrank}
  C^{-1}=\bigl(QC_0Q\bigr)^{-1}+\chi, \quad \chi=\sum_{i,j \le K} \gamma_{ij} e_i \otimes e_j.
\end{equation}
In words $C^{-1}$ is given by the inverse covariance $C_0^{-1}$ of
$\mu_0$ on $Q\cH$, and is given by $\chi$ on $P\cH.$ Because $\chi$ is
necessarily symmetric it is essentially parametrized by a vector
$\phi$ of dimension $n=\frac12 K(K+1).$ We minimize $J(m,\phi):=
\Dkl(\nu\|\mu)$ over $(m,\phi) \in \cH^1 \times \R^n.$ This is a
well-defined minimization problem as demonstrated in Example 3.7 of
\cite{PSSW13} in the sense that minimizing sequences have weakly
convergent subsequences in the admissible set.  Minimizers need not be
unique, and we should not expect them to be, as multimodality is to be
expected, in general, for measures $\mu$ defined by \eqref{e:target}.

\subsection{Schr\"odinger Parameterization}
\label{ssec:SP}

In this subsection we assume that $\cH$ comprises a Hilbert space of
functions defined on a bounded open subset of $\R^d$.  We then seek
$\Gamma$ given by \eqref{e:covariance} in the form of a multiplication
operator so that $(\Gamma u)(x)=b(x)u(x).$ Whilst minimization over
the pair $(m,\Gamma)$, with $m \in \cH^1$ and $\Gamma$ in the space of
linear operators satisfying \eqref{e:FHH}, is well-posed
\cite{PSSW13}, minimizing sequences $\{m_k,\Gamma_k\}_{k \ge 1}$ with
$(\Gamma_k u)(x)=b_k(x)u(x)$ can behave very poorly with respect to
the sequence $\{b_k\}_{k \ge 1}.$ For this reason we regularize the
minimization problem and seek to minimize
$$\Ja(m,b)=J(m,b)+\tfrac{\alpha}{2}\|b\|_r^2$$
where $J(m,b):= \Dkl(\nu\|\mu)$ and $\|\cdot\|_r$ denotes the Sobolev
space $H^r$ of functions on $\R^d$ with $r$ square integrable
derivatives, with boundary conditions chosen appropriately for the
problem at hand.  The minimization of $J_\alpha(m,b)$ over $(m,b) \in
\cH \times H^r$ is well-defined; see Section 3.3 of \cite{PSSW13}.

\section{Robbins-Monro Algorithm}
\label{sec:RM}

In order to minimize $\Dnm$ we will use the Robbins-Monro algorithm
\cite{Robbins:1950ua,Asmussen:2010aa,Pasupathy:2011cs,Kushner:2003aa}.
In its most general form this algorithm calculates zeros of functions
defined via an expectation. We apply it to the Euler-Lagrange
equations to find critical points of a non-negative objective
function, defined via an expectation. This leads to a form of gradient
descent in which we seek to integrate the equations
\begin{equation*}
  \dot m = -D_m\Dkl, \quad  \dot \theta = - D_\theta \Dkl
\end{equation*}
until they have reached a critical point. This requires two
approximations.  First, as \eqref{e:DJ_Gauss} involve expectations,
the right hand sides of these differential equations are evaluated
only approximately, by sampling.  Second, a time discretization must
be introduced.  The key idea underlying the algorithm is that,
provided the step-length of the algorithm is sent to zero judiciously,
the sampling error averages out and is diminished as the step length
goes to zero.

\subsection{Background on Robbins-Monro}
\label{s:RMback}
In this section we review some of the structure in the Euler-Lagrange
equations for the desired minimization of $\Dnm$. We then describe the
particular variant of the Robbins-Monro algorithm that we use in
practice. Suppose we have a parameterized distribution, $\nu_\theta$,
from which we can generate samples, and we seek a value $\theta$ for
which
\begin{equation}
  \label{e:RM_generic}
  f(\theta) \equiv \E^{\nu_\theta}[Y] = 0, \quad Y \sim \nu_\theta.
\end{equation}
Then an estimate of the zero, $\theta_\star$, can be obtained via the
recursion
\begin{equation}
  \label{e:RM1}
  \theta_{n+1} = \theta_n - a_n \sum_{m=1}^M \tfrac{1}{M} Y_m^{(n)}, \quad
  Y_m^{(n)} \sim \nu_{\theta_n},\quad \text{i.i.d.}
\end{equation}
Note that the two approximations alluded to above are included in this
procedure: sampling and (Euler) time-discretization.  The methodology
may be adapted to seek solutions to
\begin{equation}
  f(\theta) \equiv \E^{\nu}[F(Y;\theta)]=0, \quad Y\sim \nu,
\end{equation}
where $\nu$ is a given, fixed, distribution independent of the
parameter $\theta$. (This setup arises, for example, in
\eqref{e:DJ_Gauss_m}, where $\nu_0$ is fixed and the parameter in
question is $m.$) Letting $Z = F(Y;\theta)$, this induces a
distribution $\eta_\theta(dz) = \nu(F^{-1}(dz;\theta))$, where the
pre-image is with respect to the $Y$ argument.  Then $f(\theta) =
\E^{\eta_\theta}[Z]$ with $Z \sim \eta_\theta$, and this now has the
form of \eqref{e:RM_generic}.  As suggested in the extensive
Robbins-Monro literature, we take the step sequence to satisfy
\begin{equation}
  \label{e:seq_cond}
  \sum_{n=1}^\infty a_n = \infty, \quad \sum_{n=1}^\infty a_n^2 < \infty.
\end{equation}
A suitable choice of $\{a_n\}$ is thus $a_n = a_0 n^{-\gamma}$,
$\gamma \in (1/2,1]$.  The smaller the value of $\gamma$, the more
``large'' steps will be taken, helping the algorithm to explore the
configuration space.  On the other hand, once the sequence is near the
root, the smaller $\gamma$ is, the larger the Markov chain variance
will be.  In addition to the choice of the sequence $a_n$,
\eqref{e:RM_generic} introduces an additional parameter, $M$, the
number of samples to be generated per iteration.  See
\cite{Byrd:2012bd,Asmussen:2010aa} and references therein for
commentary on sample size.

The conditions needed to ensure convergence, and what kind of
convergence, have been relaxed significantly through the years.  In
their original paper, Robbins and Monro assumed that $Y\sim
\mu_\theta$ were almost surely uniformly bounded, with a constant
independent of $\theta$.  If they also assumed that $f(\theta)$ was
monotonic and $f'(\theta_\star) >0$, they could obtain convergence in
$L^2$.  With somewhat weaker assumptions, but still requiring that the
zero be simple, Blum developed convergence with probability one,
\cite{Blum:1954tf} .  All of this was subsequently generalized to the
arbitrary finite dimensional case; see
\cite{Asmussen:2010aa,Kushner:2003aa,Pasupathy:2011cs}.

As will be relevant to this work, there is the question of the
applicability to the infinite dimensional case when we seek, for
instance, a mean function in a separable Hilbert space.  This has also
been investigated; see \cite{Yin:1990wv, Dvoretzky:1986ca} along with
references mentioned in the preface of \cite{Kushner:2003aa}.  In this
work, we do not verify that our problems satisfy convergence criteria;
this is a topic for future investigation.

A variation on the algorithm that is commonly applied is the
enforcement of constraints which ensure $\{\theta_n\}$ remain in some
bounded set; see \cite{Kushner:2003aa} for an extensive discussion.
We replace \eqref{e:RM1} by
\begin{equation}
  \label{e:RMcon}
  \theta_{n+1} = \Pi_D\left[\theta_n - a_n \sum_{m=1}^M \tfrac{1}{M} Y_m^{(n)}\right], \quad
  Y_m^{(n)} \sim \nu_{\theta_n},\quad \text{i.i.d.},
\end{equation}
where $D$ is a bounded set, and $\Pi_D(x)$ computes the point in $D$
nearest to $x$.  This is important in our work, as the parameters that
define must correspond to covariance operators. They must be positive
definite, symmetric, and trace-class. Our method automatically
produces symmetric trace-class operators, but the positivity has to be
enforced by a projection.

\subsection{Robbins-Monro Applied to KL}

We seek minimizers of $\Dkl$ as stationary points of the associated
Euler-Lagrange equations, \eqref{e:DJ_Gauss}.  Before applying
Robbins-Monro to this problem, we observe that we are free to
precondition the Euler-Lagrange equations.  In particular, we can
apply bounded, positive, invertible operators so that pre-conditioned
gradient will lie in the same function space as the parameter; this
makes the iteration scheme well posed.  For \eqref{e:DJ_Gauss_m}, we
have found pre-multiplying by $C_0$ to be sufficient.  For
\eqref{e:DJ_Gauss_C}, the operator will be problem specific, depending
on how $\theta$ parameterizes $C$, and also if there is a
regularization.  We denote the preconditioner for the second equation
by $B_\theta$.  Thus, the preconditioned Euler-Lagrange equations are
\begin{subequations}
  \label{e:preDJ_Gauss}
  \begin{align}
    \label{e:preDJ_Gauss_m}
    0= &C_0\bbE^{\nu_0}D_u\Phi_\mu(u+m)  +  (m-m_0),\\
    \label{e:preDJ_Gauss_C}
    0= &B_\theta\left[\bbE^{\nu_0}(\Delta_0\,
      D_\theta\Delta_0)-(\bbE^{\nu_0}\Delta_0)(\bbE^{\nu_0}D_\theta\Delta_0)\right].
  \end{align}
\end{subequations}
We must also ensure that $m$ and $\theta$ correspond to a well defined
Gaussian; $C$ must be a covariance operator.  Consequently, the
Robbins-Monro iteration scheme is:
\begin{algorithm}
  \label{a:RMforKL}
  \begin{enumerate}
  \item Set $n=0$.  Pick $m_0$ and $\theta_0$ in the admissible set,
    and choose a sequence $\{a_n\}$ satisfying \eqref{e:seq_cond}
  \item Update $m_n$ and $\theta_n$ according to:
    \begin{subequations}
      \label{e:RM_preKL}
      \begin{align}
        m_{n+1} &= \Pi_m\left[m_n - a_n
          \left\{C_0\left(\sum_{\ell=1}^M \tfrac{1}{M}\cdot
              D_u\Phi_\mu(u_\ell)\right) + m_n - m_0\right\}\right],\\
        \begin{split}
          \theta_{n+1} &= \Pi_\theta\left[\theta_n - a_n B_\theta
            \left\{\sum_{\ell=1}^M \tfrac{1}{M}\cdot
              \Delta_0(u_\ell) D_\theta \Delta_0(u_\ell)\right.\right.\\
          &\hspace{2cm}\left. \left.-\left(\sum_{\ell=1}^M
                \tfrac{1}{M}\cdot \Delta_0(u_\ell)
              \right)\left(\sum_{\ell=1}^M \tfrac{1}{M}\cdot D_\theta
                \Delta_0(u_\ell)\right)\right\}\right].
        \end{split}
      \end{align}
    \end{subequations}
  \item $n\to n+1$ and return to 2
  \end{enumerate}
\end{algorithm}
Typically, we have some {\it a priori} knowledge of the magnitude of
the mean.  For instance, $m\in H^1([0,1];\R^1)$ may correspond to a
mean path, joining two fixed endpoints, and we know it to be
confined to some interval $[\underline{m},\overline{m}]$.  In this
case we choose
\begin{equation}
  \Pi_m(f)(t) = \min\{\max\{f(t),\underline{m}\},\overline{m}\}, \quad 0<t<1.
\end{equation}
For $\Pi_\theta$, it is necessary to compute part of the spectrum of
the operator that $\theta$ induces, check that it is positive, and if
it is not, project the value to something satisfactory.  In the case
of the finite rank operators discussed in Section \ref{ssec:FR}, the
matrix $\boldsymbol{\gamma}$ must be positive.  One way of handing
this, for symmetric real matrices is to make the following choice:
\begin{equation}
  \Pi_\theta(A) = X \diag\{\min\{  \max\{\boldsymbol{\lambda}, \underline
  \lambda\}, \overline{\lambda}\}\}X^T,
\end{equation}
where $A = X \diag\{\boldsymbol{\lambda}\}X^{T}$ is the spectral
decomposition, and $\underline{\lambda}$ and $\overline{\lambda}$ are
constants chosen {\it a priori}.  It can be shown that this projection
gives the closest, with respect to the Frobenius norm, symmetric
matrix with spectrum constrained to $[\underline{\lambda},
\overline{\lambda}]$, \cite{Higham:1988jg}.\footnote{Recall that the
  Frobenius norm is the finite dimensional analog of the
  Hilbert-Schmidt norm.}

\section{Improved MCMC Sampling}
\label{sec:MCMC}

The idea of the Metropolis-Hastings variant of MCMC is to create an
ergodic Markov chain which is reversible, in the sense of Markov
processes, with respect to the measure of interest; in particular the
measure of interest is invariant under the Markov chain.  In our case
we are interested in the measure $\mu$ given by \eqref{e:target}.
Since this measure is defined on an infinite dimensional space it is
advisable to use MCMC methods which are well-defined in the infinite
dimensional setting, thereby ensuring that the resulting methods have
mixing rates independent of the dimension of the finite dimensional
approximation space. This philosophy is explained in the paper
\cite{CRSW13}. The pCN algorithm is perhaps the simplest MCMC method
for \eqref{e:target} meeting these requirements. It has the following
form:

\begin{algorithm}

  Define $a_{\mu}(u,v):=\min\{1,\exp\bigl(\Phm(u)-\Phm(v)\bigr)\}.$

  \begin{enumerate}

  \item Set $k=0$ and Pick $u^{(0)}$

  \item $v^{(k)}=m_0+\sqrt{(1-\beta^2)}(u^{(k)}-m_0)+\beta \xi^{(k)},
    \quad \xi^{(k)} \sim N(0,C_0)$

  \item Set $u^{(k+1)}=v^{(k)}$ with probability
    $a_{\mu}(u^{(k)},v^{(k)})$

  \item Set $u^{(k+1)}=u^{(k)}$ otherwise

  \item $k \to k+1$ and return to 2

  \end{enumerate}
  \label{a1}
\end{algorithm}

This algorithm has a spectral gap which is independent of the
dimension of the discretization space under quite general assumptions
on $\Phi_\mu$ \cite{hairer2011spectral}.  However, it can still behave
poorly if $\Phi_\mu$, or its gradients, are large. This leads to poor
acceptance probabilities unless $\beta$ is chosen very small so that
proposed moves are localized; either way, the correlation decay is
slow and mixing is poor in such situations.  This problem arises
because the underlying Gaussian $\mu_0$ used in the algorithm
construction is far from the target measure $\mu$. This suggests a
potential resolution in cases where we have a good Gaussian
approximation to $\mu$, such as the measure $\nu$.  Rather than basing
the pCN approximation on \eqref{e:target} we base it on
\eqref{eq:mu2}; this leads to the following algorithm:

\begin{algorithm}

  Define
  $a_{\nu}(u,v):=\min\{1,\exp\bigl(\Delta(u)-\Delta(v)\bigr)\}.$

  \begin{enumerate}

  \item Set $k=0$ and Pick $u^{(0)}$

  \item $v^{(k)}=m+\sqrt{(1-\beta^2)}(u^{(k)}-m)+\beta \xi^{(k)},
    \quad \xi^{(k)} \sim N(0,C)$

  \item Set $u^{(k+1)}=v^{(k)}$ with probability
    $a_{\nu}(u^{(k)},v^{(k)})$

  \item Set $u^{(k+1)}=u^{(k)}$ otherwise

  \item $k \to k+1$ and return to 2

  \end{enumerate}
  \label{a2}
\end{algorithm}

We expect $\Delta$ to be smaller than $\Phi$, at least in regions of
high $\mu$ probability. This suggests that, for given $\beta$,
Algorithm \ref{a2} will have better acceptance probability than
Algorithm \ref{a1}, leading to more rapid sampling. We show in what
follows that this is indeed the case.

\section{Numerical Results}
\label{sec:N}

In this section we describe our numerical results. These concern both
solution of the relevant minimization problem, to find the best
Gaussian approximation from within a given class using Algorithm
\ref{a:RMforKL} applied to the two parameterizations given in
subsections \ref{ssec:FR} and \ref{ssec:SP}, together with results
illustrating the new pCN Algorithm \ref{a2} which employs the best
Gaussian approximation within MCMC. We consider two model problems: a
Bayesian Inverse problem arising in PDEs, and a Conditioned Diffusion
problem motivated by molecular dynamics.  Some details on the path
generation algorithms used in these two problems are given in Appendix
\ref{a:samples}.

\subsection{Bayesian Inverse Problem}
\label{ssec:BIP}

We consider an inverse problem arising in groundwater flow. The
forward problem is modelled by the Darcy constitutive model for porous
medium flow. The objective is to find $p \in V:=H^1$ given by the
equation
\begin{subequations}
  \label{e:perm}
  \begin{align}
    -\nabla\cdot \bigl(\exp(u)\nabla p\bigr)&=0, \quad x\in D,\\
    p&=g, \quad x\in \partial D.
  \end{align}
\end{subequations}
The inverse problem is to find $u \in X=L^\infty(D)$ given noisy
observations
$$y_j=\ell_j(p)+\eta_j,$$
where $\ell_j \in V^*$, the space of continuous linear functionals on
$V$.  This corresponds to determining the log permeability from
measurements of the hydraulic head (height of the
water-table). Letting $\cG(u) = {\ell}(p(\cdot;u))$, the solution
operator of \eqref{e:perm} composed with the vector of linear
functionals ${\ell} = (\ell_j)^T$.  We then write, in vector form,
$$y=\cG(u)+\eta.$$
We assume that $\eta \sim N(0,\Sigma)$ and place a Gaussian prior
$N(m_0,C_0)$ on $u$. Then the Bayesian inverse problem has the form
\eqref{e:target} where
$$\Phi(u):=\frac12\bigl\|\Sigma^{-\frac12}\bigl(y-\cG(u)\bigr)\bigr\|^2.$$

We consider this problem in dimension one, with $\Sigma = \gamma^2 I$,
and employing pointwise observation at points $x_j$ as the linear
functionals $\ell_j$.  As prior we take the Gaussian $\mu_0 = N(0,
C_0)$, with
\begin{equation*}
  C_0 = \delta \left(-\frac{d^2}{dx^2} \right)^{-1},
\end{equation*}
restricted to the subspace of $L^2(0,1)$ of periodic mean zero
functions.  In one dimension we may solve the forward problem
\eqref{e:perm} on $D=(0,1)$, with $p(0) = p^-$ and $p(1) = p^+$
explicitly to obtain
\begin{equation}
  \label{e:perm_fwd_soln}
  p(x;u) = (p^+ - p^-) \frac{J_x(u)}{J_1(u)} + p^-, \quad J_x(u) \equiv
  \int_0^x \exp(-u(z))dz,
\end{equation}
and
\begin{equation}
  \Phi(u) = \frac{1}{2\gamma^2}\sum_{j=1}^\ell |p(x_j;u) - y_j|^2
\end{equation}
Following the methodology of \cite{Hinze:2009aa}, to compute
$D_u\Phi(u)$, we must solve the adjoint problem for $q$:
\begin{equation}
  \label{e:perm_adj}
  -\frac{d}{dx}\left(\exp(u) \frac{dq}{dx} \right)= -
  \frac{1}{\gamma^2}\sum_{j=1}^\ell (p(x_j;u) - y_j)\delta_{x_j}, \quad
  q(0) = q(1)=0.
\end{equation}
Again, we can write the solution explicitly via quadrature:
\begin{equation}
  \label{e:perm_adj_soln}
  \begin{split}
    q(x;u) &= K_x(u) - \frac{K_1(u) J_x(u)}{J_1(u)}, \\
    K_x(u)&\equiv \sum_{j=1}^\ell \frac{p(x_j;u) - y_j}{\gamma^2}
    \int_{0}^x \exp(-u(z)) H(z-x_j)dz
  \end{split}
\end{equation}
Using \eqref{e:perm_fwd_soln} and \eqref{e:perm_adj_soln},
\begin{equation}
  D_u\Phi(u) = \exp(u) \frac{d p(x;u)}{dx}\frac{d q(x;u)}{dx}.
\end{equation}

For this application we use a finite rank approximation of the
covariance of the approximating measure $\nu$, as explained in
subsection \ref{ssec:FR}.  In computing with the finite rank matrix
\eqref{e:lowrank}, it is useful, for good convergence, to work with $B
= \boldsymbol{\gamma}^{-1/2}$.  The preconditioned derivatives,
\eqref{e:preDJ_Gauss}, also require $D_B\Delta_0$, where $\Delta_0$ is
given by \eqref{e:D0}.  To characterize this term, if $v = \sum_i v_i
e_i$, we let ${\bf v} = (v_1, \ldots v_N)^T$ be the first $N$
coefficients.  Then for the finite rank approximation,
\begin{equation}
  \Phi_{\nu_0}(v) = \frac{1}{2}\left\langle v,(C^{-1} -
    C_0^{-1})v\right\rangle=\frac{1}{2} {\bf v}^T (\boldsymbol{\gamma} -
  \diag(\lambda_1^{-1},\ldots \lambda_N^{-1})) {\bf v}.
\end{equation}
Then using our parameterization with respect to the matrix $B$,
\begin{equation}
  D_B \Delta_0(v) =D_B (\Phi(m+v) - \Phi_{\nu_0}(v)) =
  \frac{1}{2}\left[B^{-1} {\bf v} ( B^{-2} {\bf v})^T  +B^{-2} {\bf v} ( B^{-1} {\bf v})^T\right].
\end{equation}
As a preconditioner for \eqref{e:preDJ_Gauss_C} we found that it was
sufficient to multiply by $\lambda_N$.

We solve this problem with Ranks $K=$ 2, 4, 6, first minimizing $\Dkl$,
and then running the pCN Algorithm \ref{a2} to sample from $\mu_y$.
The common parameters are:
\begin{itemize}
\item $\gamma = 0.1$, $\delta=1$, $p^{-}=0$ and $p^+=2$;
\item There are $2^7$ uniformly spaced grid points in $[0,1)$;
\item \eqref{e:perm_fwd_soln} and \eqref{e:perm_adj_soln} are solved
  via trapezoidal rule quadrature;
\item The true value of $u(x) = 2\sin(2\pi x)$;
\item The dimension of the data is four, with samples at $x=0.2, 0.4,
  0.6, 0.8$;
\item $m_0= 0$ and $B_0 = \diag(\lambda_n)$, $n \leq \text{Rank}$;
\item $\int \dot m^2$ is estimated spectrally;
\item $10^5$ iterations of the Robbins-Monro algorithm are performed
  with $10^2$ samples per iteration;
\item $a_0 = .1$ and $a_n = a_0 n^{-3/5}$;
\item The eigenvalues of $\sigma$ are constrained to the interval
  $[10^{-4}, 10^0]$ and the mean is constrained to $[-5,5]$;
\item pCN Algorithms \ref{a1} and \ref{a2} are implemented with $\beta
  =0.6$, and $10^6$ iterations.
\end{itemize}

The results of the $\Dkl$ optimization phase of the problem, using the
Robbins-Monro Algorithm \ref{a:RMforKL}, appear in Figure
\ref{f:bayesinverse_kl}.  This figure shows: the convergence of $m_n$ in the Rank 2
case; the convergence of the eigenvalues of $B$ for Ranks 2, 4, and 6; and the minimization of
$\Dkl$.  We only present the convergence of the mean in the
Rank 2 case, as the others are quite similar.  At the termination of
the Robbins-Monro step, the $B_n$ matrices are:
\begin{align}
  {B}_n &= \begin{pmatrix}
    0.0857  &  0.00632\\
    - & 0.105
  \end{pmatrix}\\
  {B}_n &= \begin{pmatrix}
    0.0864  &   0.00500  &  -0.00791  & -0.00485\\
    -              &    0.106   &  0.00449   &-0.00136\\
    -               & -                    &     0.0699  &-0.000465\\
    - & - & - & 0.0739
  \end{pmatrix}\\
  {B}_n &= \begin{pmatrix}
    0.0870  &  0.00518 &   -0.00782 &   -0.00500&   -0.00179&   -0.00142\\
    - &      0.106&    0.00446 &   -0.00135&     0.00107&    0.00166\\
    - & - & 0.0701 & -0.000453 & -0.00244&
    9.81\times 10^{-5}\\
    - &  - &  - &     0.0740 &   -0.00160&    0.00120\\
    -  &   - &  - &  - & 0.0519 &   -0.00134\\
    - & - & - & - & -& 0.0523
  \end{pmatrix}
\end{align}
Note there is consistency as the rank increases, and this is reflected
in the eigenvalues of the $B_n$ shown in Figure
\ref{f:bayesinverse_kl}.  As in the case of the scalar problem, more
iterations of Robbins-Monro are computed than are needed to ensure
convergence.

\begin{figure}

  \begin{center}
    \subfigure[Convergence of $m_n(x)$ for Rank
    2]{\includegraphics[width=6.25cm]{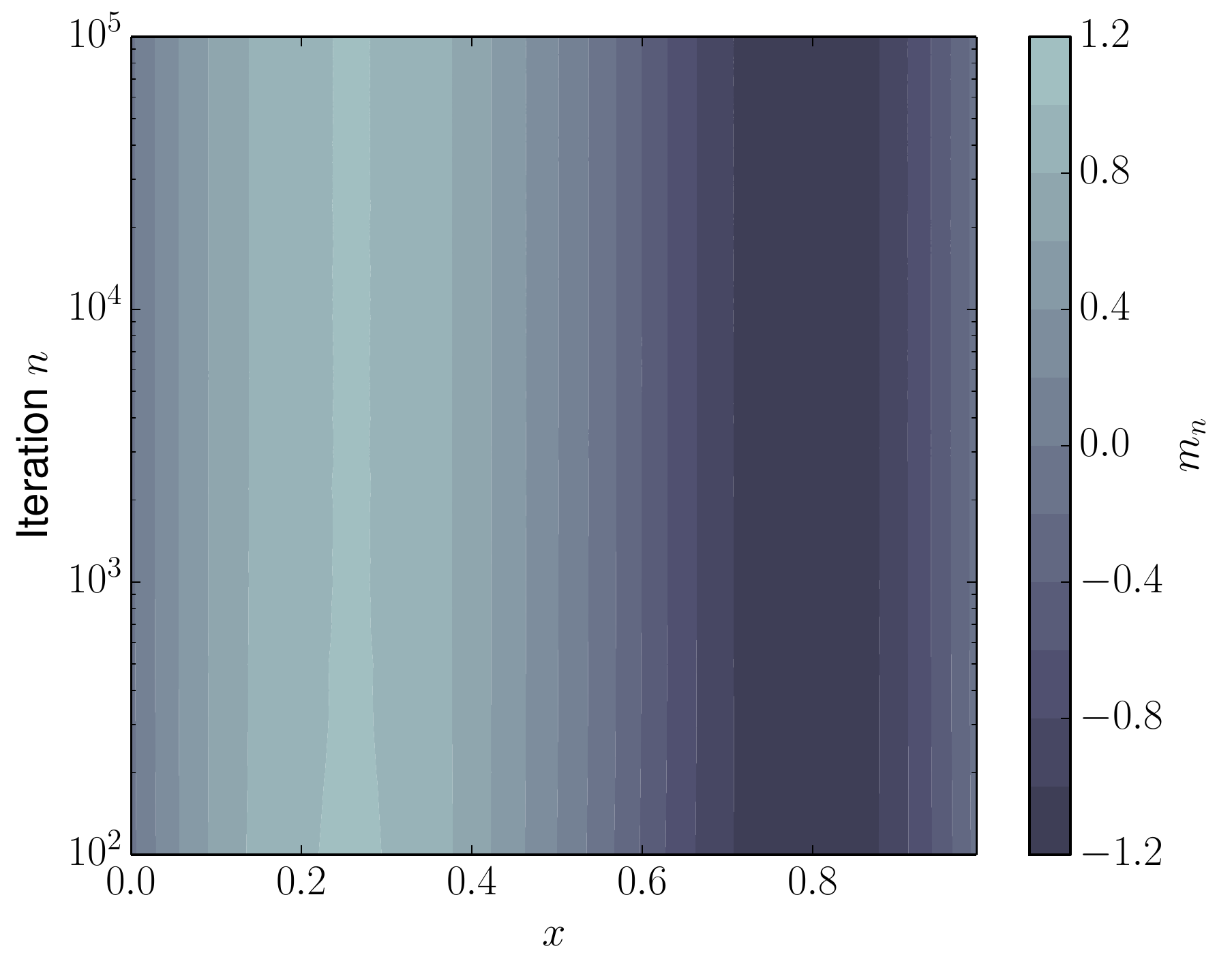}}
    \subfigure[$m_n(x)$ for Rank 2 at Particular
    Iterations]{\includegraphics[width=6.25cm]{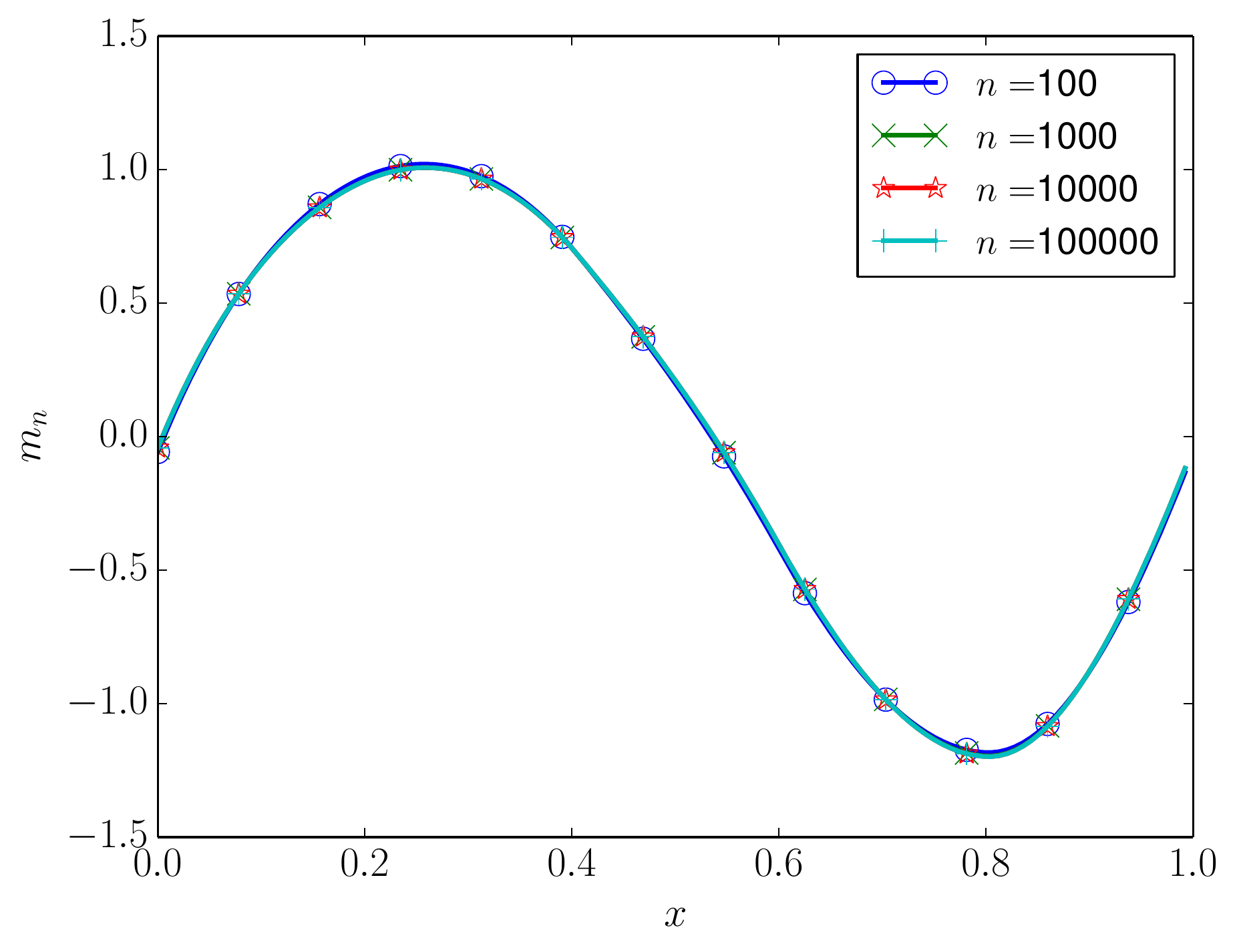}}

    \subfigure[Convergence of the eigenvalues of
    $B_n$]{\includegraphics[width=6.25cm]{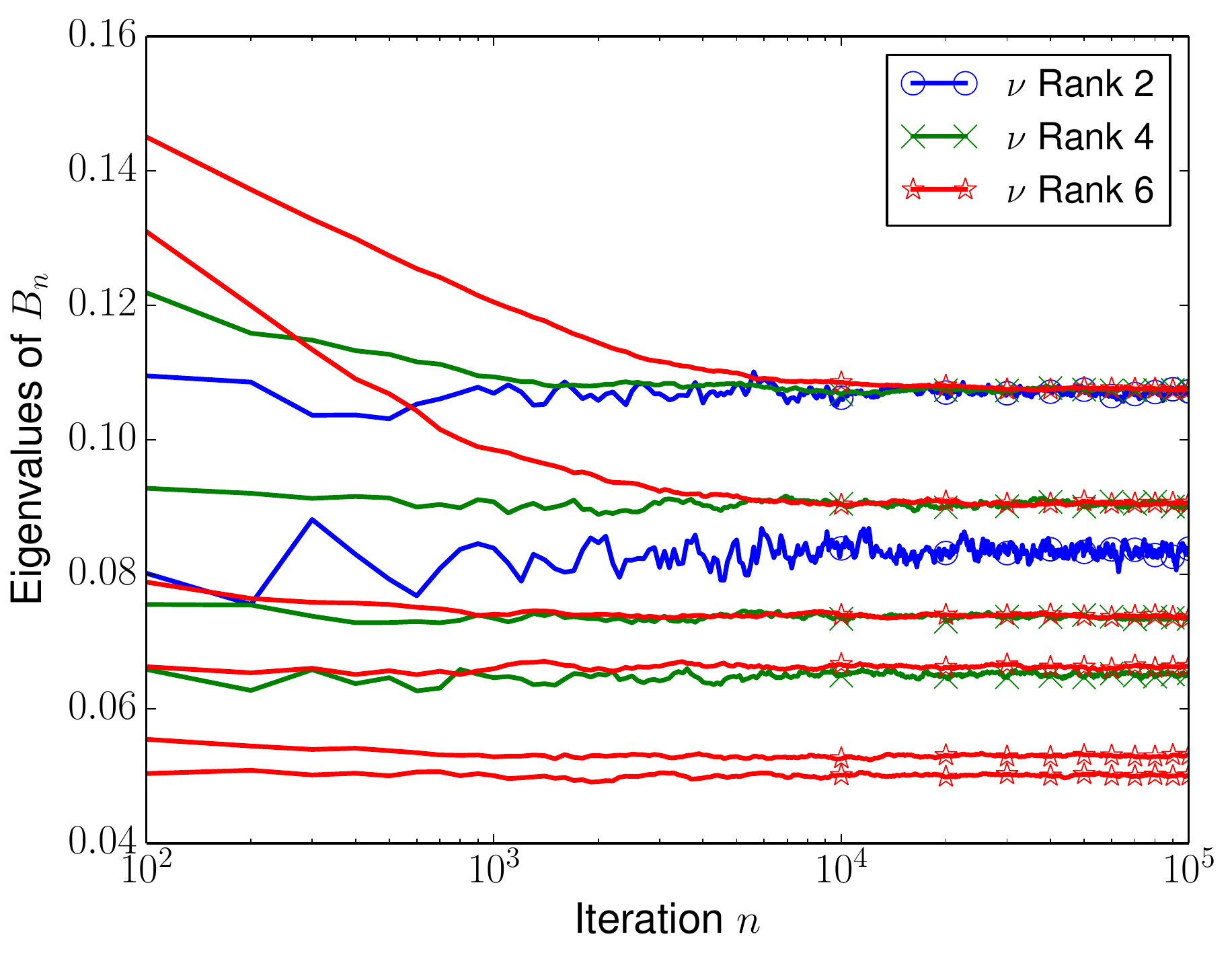}}
    \subfigure[Minimization of
    $\Dkl$]{\includegraphics[width=6.25cm]{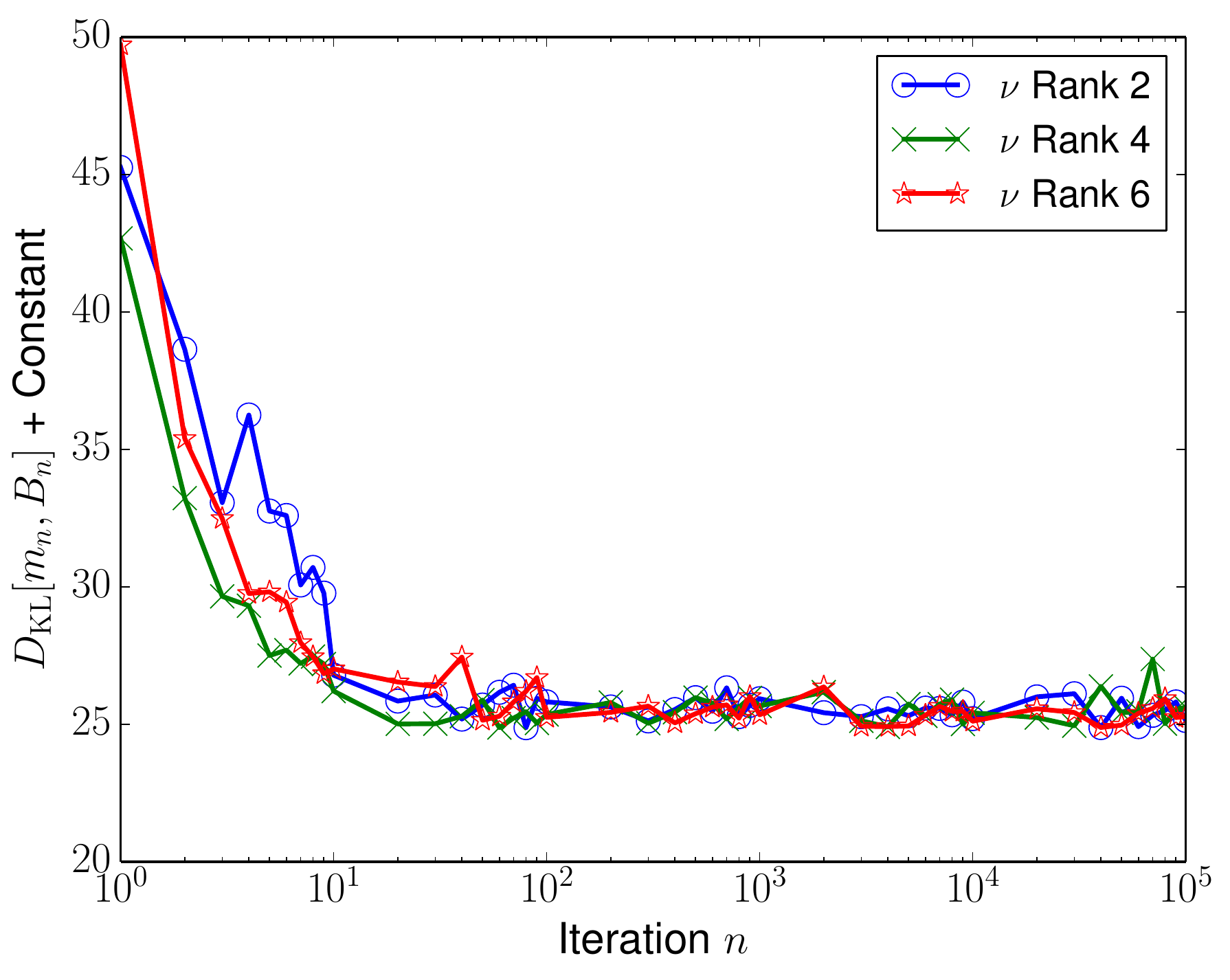}}

  \end{center}

  \caption{Convergence of the Robbins-Monro Algorithm \ref{a:RMforKL}
    applied to the Bayesian Inverse problem. Figures (a) and (b) show the
    convergence of $m_n$ in the case of Rank 2, while Figure (c) shows
    the convergence of the eigenvalues of $B_n$ for Ranks 2, 4 and
    6. Figure (d) shows the minimization of $\Dkl$. The observational
    noise is $\gamma = 0.1$.  The figures indicate that Rank 2 has
    converged after $10^2$ iterations; Rank 4 has converged after
    $10^3$ iterations; and Rank 6 has converged after $10^4$
    iterations.}

  \label{f:bayesinverse_kl}

\end{figure}

The posterior sampling, by means of Algorithms \ref{a1} and \ref{a2},
is described in Figure~\ref{f:bayesinverse_post}.  There is good
posterior agreement in the means and variances in all cases, and the
low rank priors provide not just good means but also variances.  This
is reflected in the high acceptance rates and low auto covariances;
there is approximately an order of magnitude in improvement in using
Algorithm \ref{a2}, which is informed by the best Gaussian
approximation, and Algorithm \ref{a1}, which is not.

However, notice in Figure~\ref{f:bayesinverse_kl} that the posterior,
even when $\pm$ one standard deviation is included, does not capture
the truth.  The results are more favorable when we consider the
pressure field, and this hints at the origin of the disagreement.  The
values at $x=0.2$ and 0.4, and to a lesser extent at 0.6, are
dominated by the noise.  Our posterior estimates reflect the
limitations of what we are able to predict given our assumptions.  If
we repeat the experiment with smaller observational noise, $\gamma =
0.01$ instead of $0.1$, we see better agreement, and also variation in
performance with respect to approximations of different ranks.  These
results appear in Figure \ref{f:bayesinverse_post01}.  In this smaller
noise case, there is a two order magnitude improvement in performance.

\begin{figure}
  \begin{center}
    \subfigure[Log permeability
    $u(x)$]{\includegraphics[width=6.25cm]{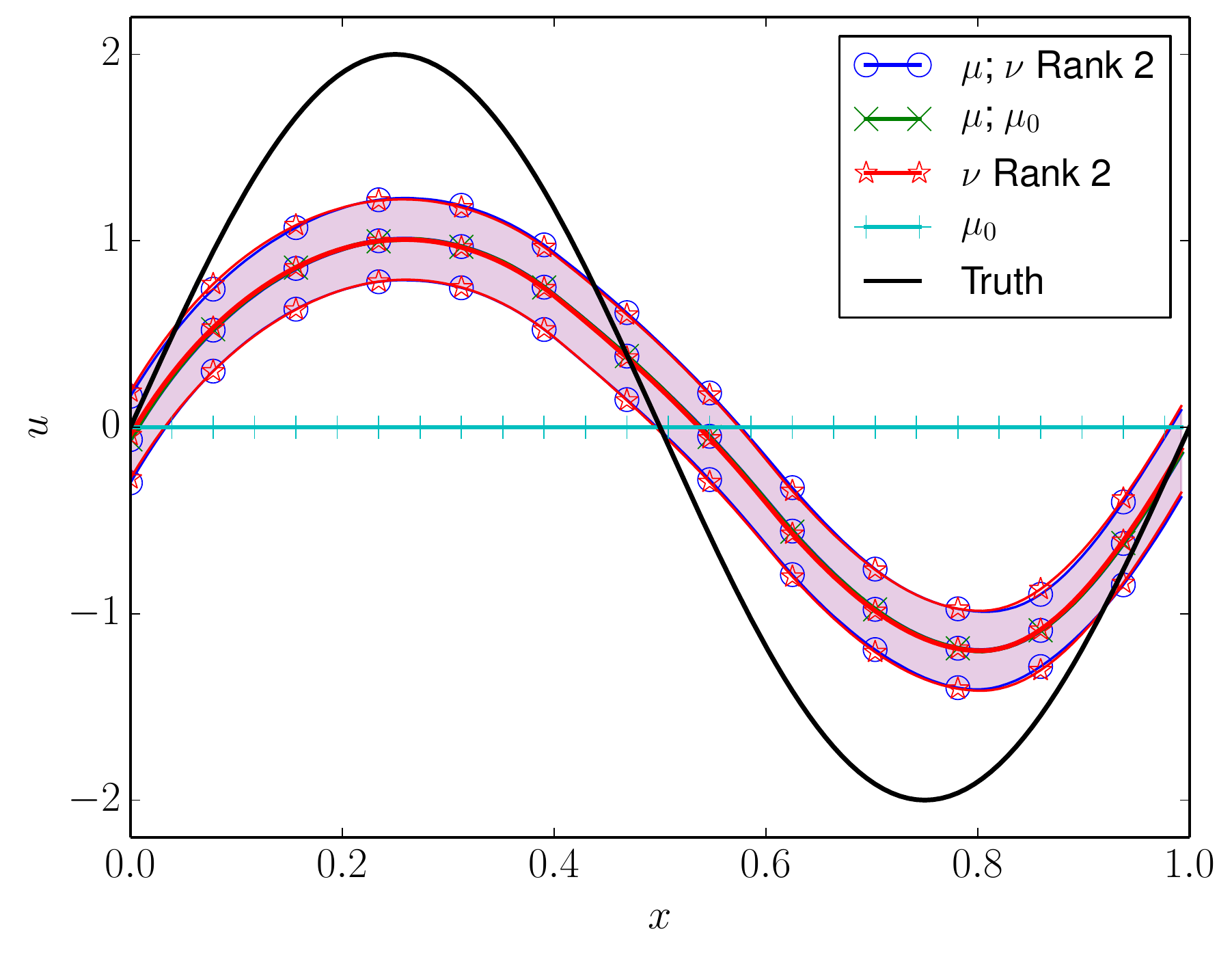}}
    \subfigure[Pressure
    $p(x)$]{\includegraphics[width=6.25cm]{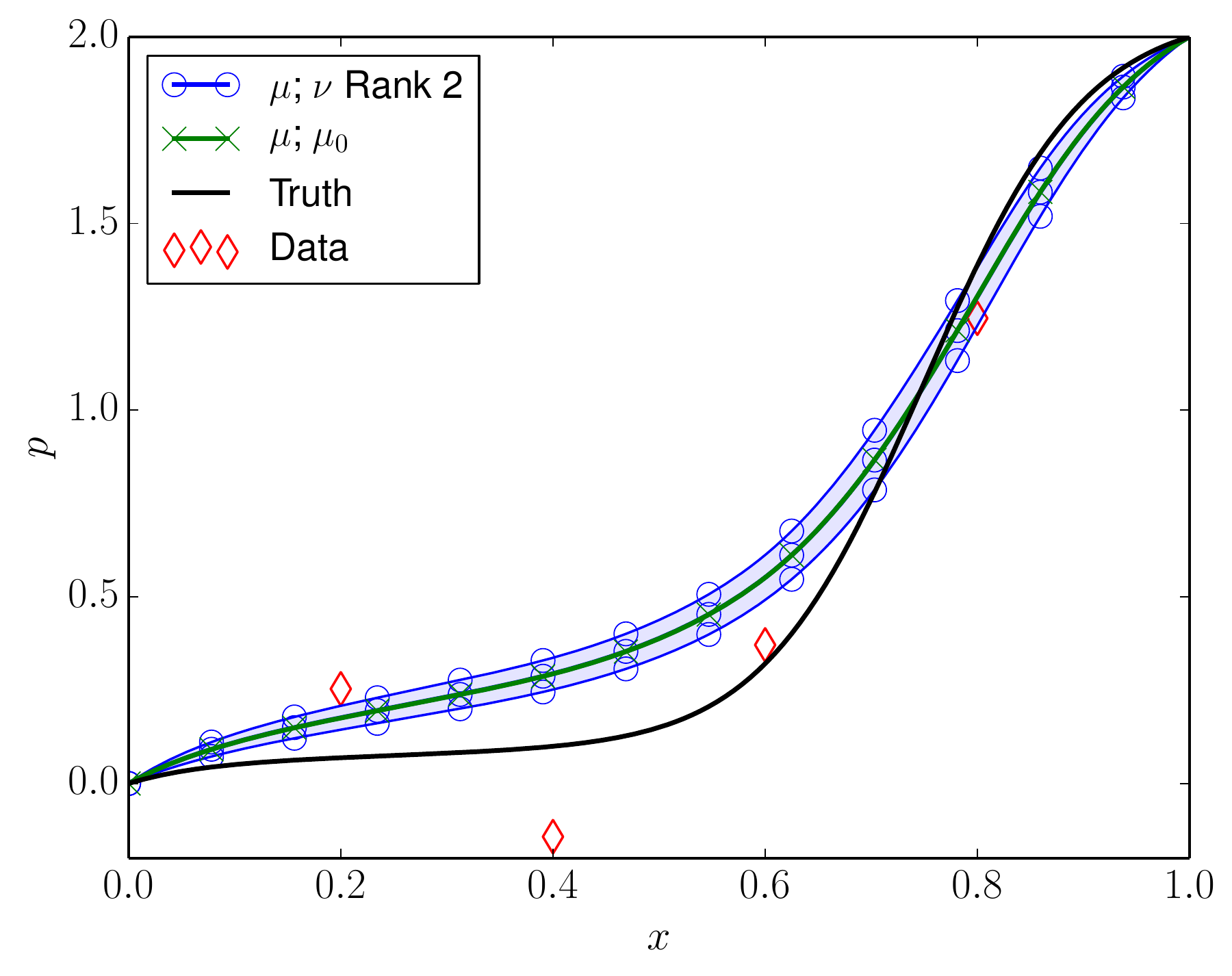}}

    \subfigure[Acceptance
    Rate]{\includegraphics[width=6.25cm]{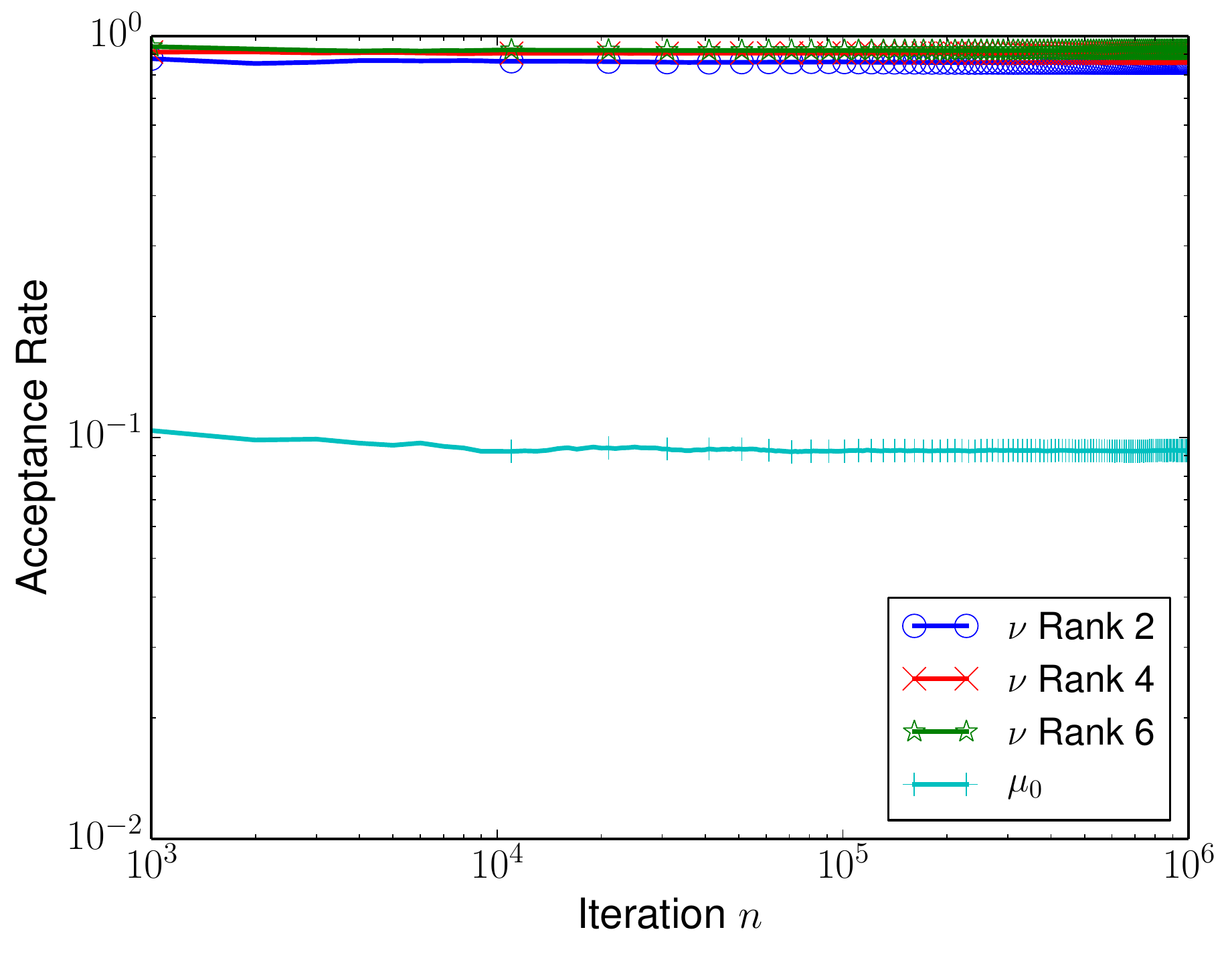}}
    \subfigure[Autocovariance]{\includegraphics[width=6.25cm]{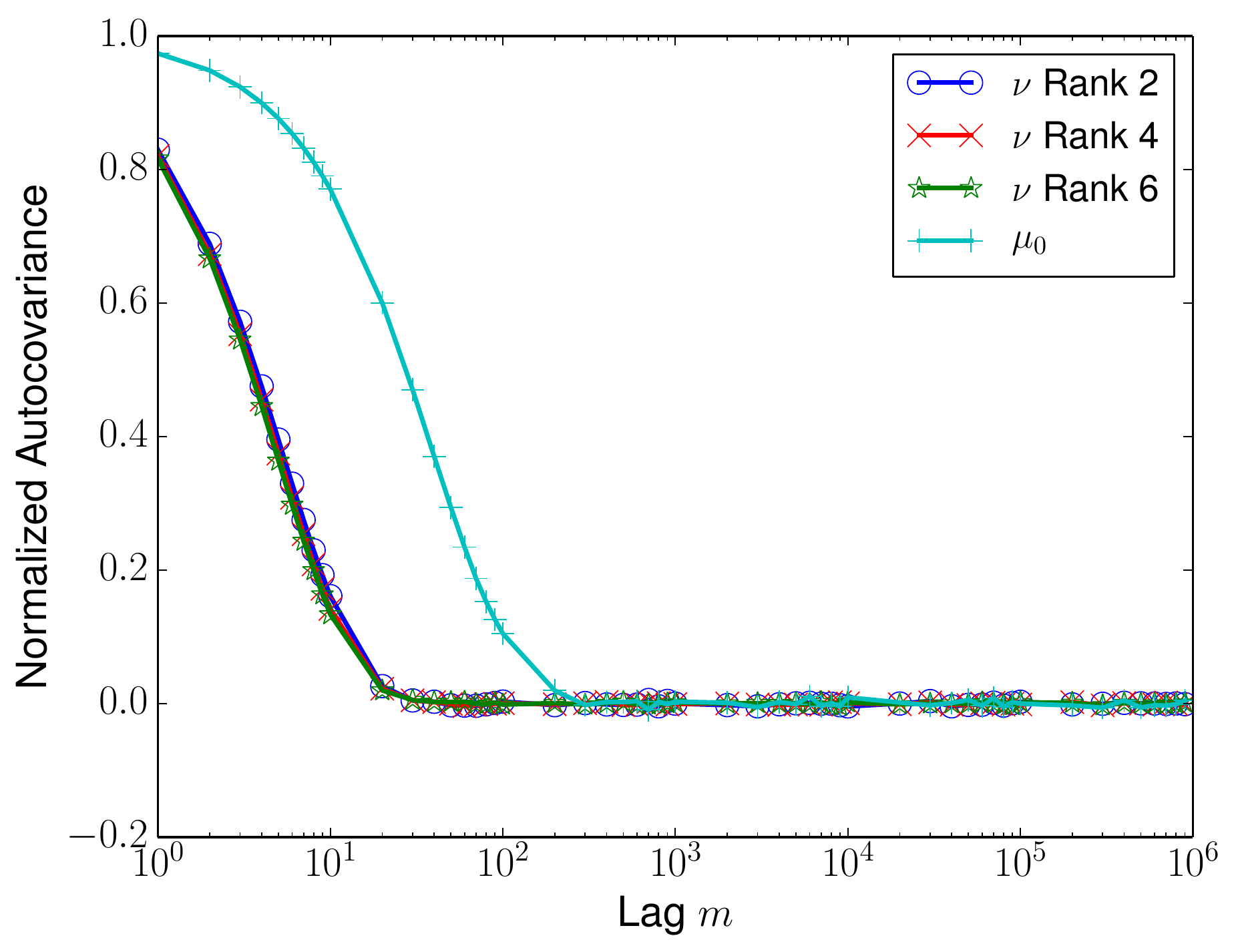}}

  \end{center}

  \caption{Behavior of MCMC Algorithms \ref{a1} and \ref{a2} for the
    Bayesian Inverse problem with observational noise $\gamma = 0.1$.
    The true posterior distribution, $\mu$, is sampled using $\mu_0$
    (Algorithm \ref{a1}) and $\nu$, with Ranks 2, 4 and 6 (Algorithm
    \ref{a2}). The resulting posterior approximations are labeled
    $\mu;\;\mu_0$ (Algorithm \ref{a1}) and $\mu;\;\nu$ Rank 2,
    (Algorithm \ref{a2}).  The notation $\mu_0$ and $\nu$ Rank $K$ is
    used for the prior and best Gaussian approximations of the
    corresponding rank.  The distributions of $u(x)$, in Figure (a),
    for the optimized $\nu$ Rank 2 and the posterior $\mu$ overlap,
    but are still far from the truth.  The results for Ranks 4 and 6
    are similar.  Figures (c) and (d) compare the performance of
    Algorithm \ref{a2} when using $\nu$ Rank K for the proposal, with
    $K=$2, 4, and 6, against Algorithm \ref{a1}.  $\nu$ Rank 2 gives an
    order of magnitude improvement in posterior sampling over $\mu_0$.
    There is not significant improvement when using $\nu$ Ranks 4 and
    6 over using Rank 2.  Shaded regions enclose $\pm$ one standard
    deviation.}
  \label{f:bayesinverse_post}
\end{figure}

\begin{figure}
  \begin{center}
    \subfigure[Log permeability
    $u(x)$]{\includegraphics[width=6.25cm]{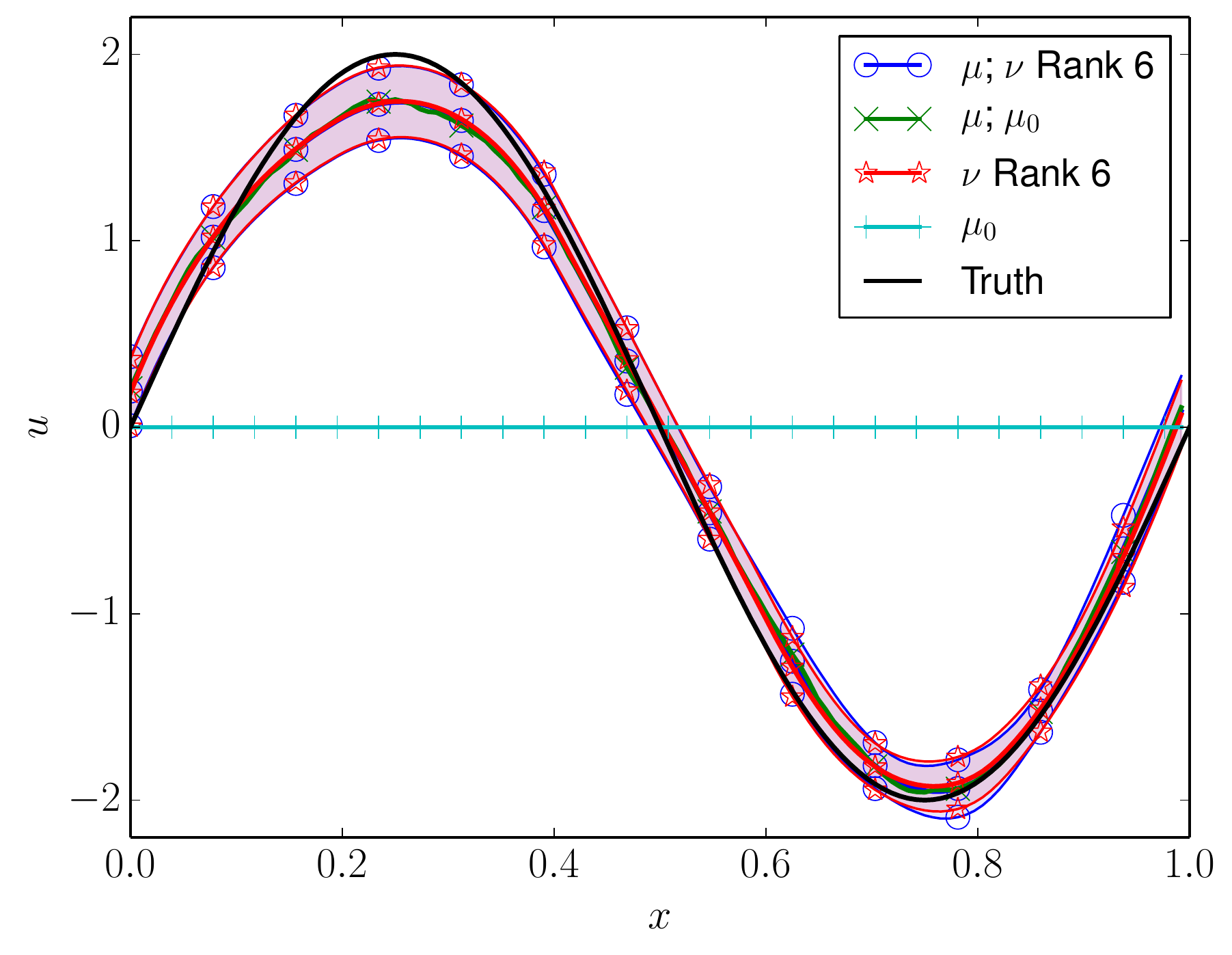}}
    \subfigure[Pressure
    $p(x)$]{\includegraphics[width=6.25cm]{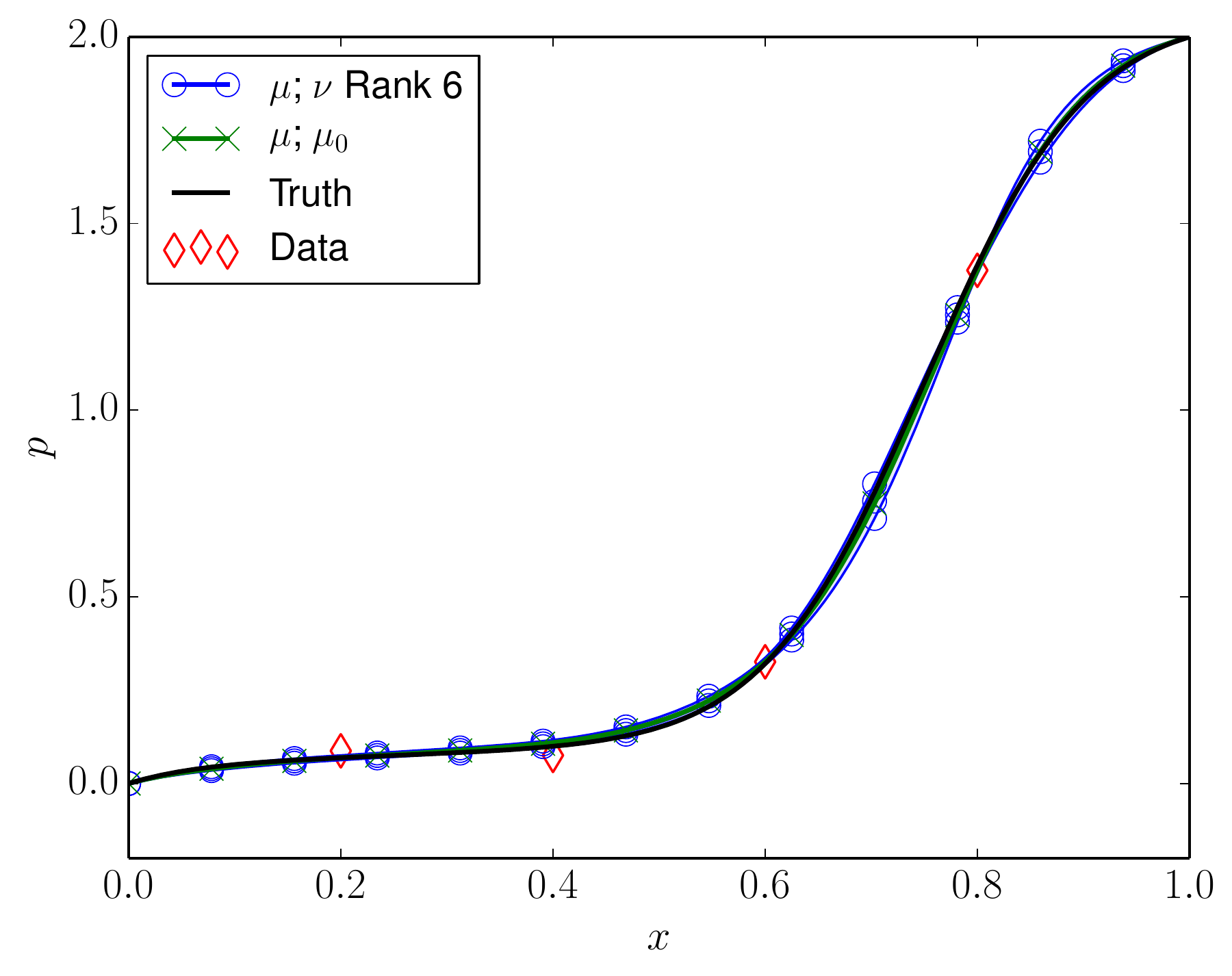}}

    \subfigure[Acceptance
    Rate]{\includegraphics[width=6.25cm]{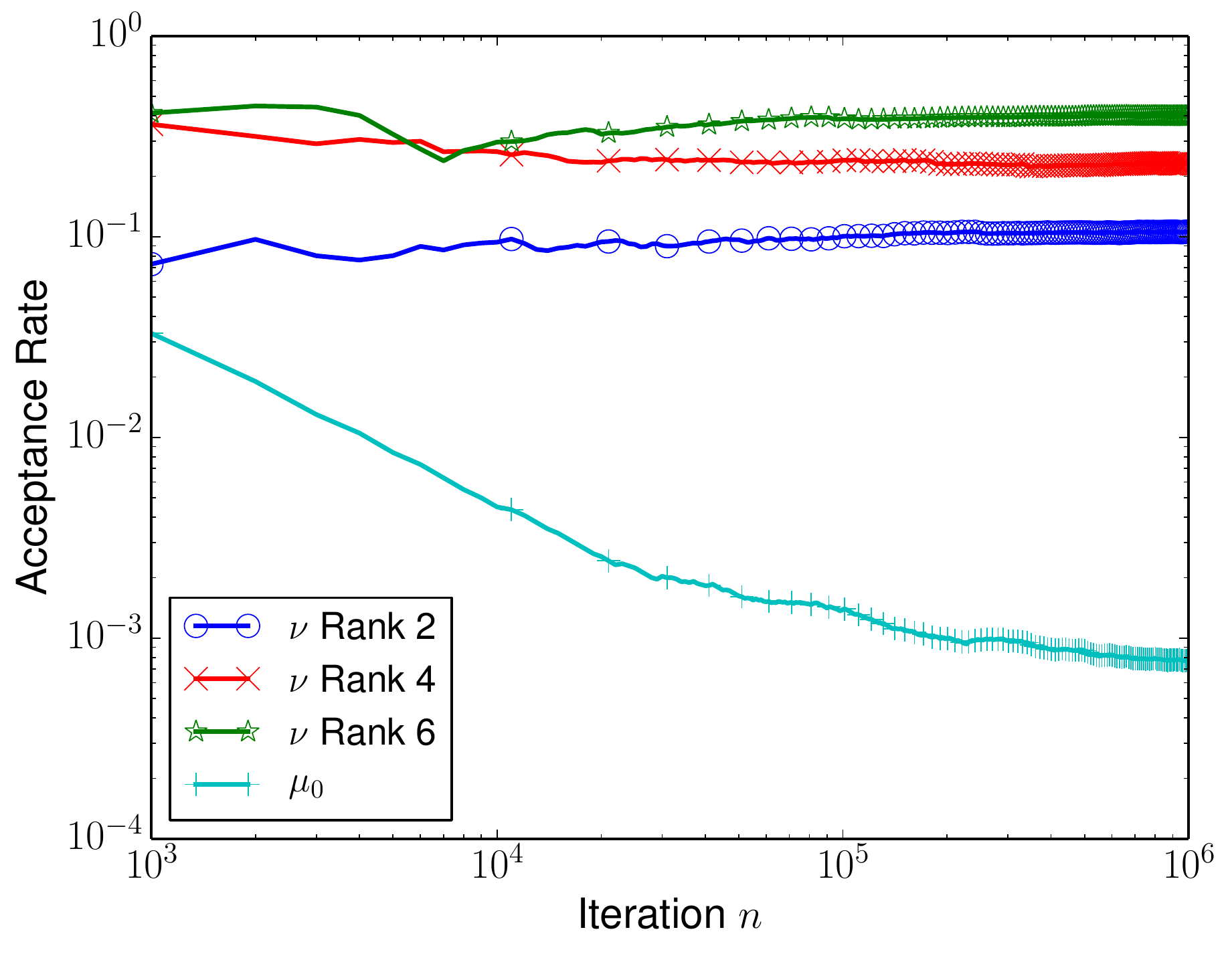}}
    \subfigure[Autocovariance]{\includegraphics[width=6.25cm]{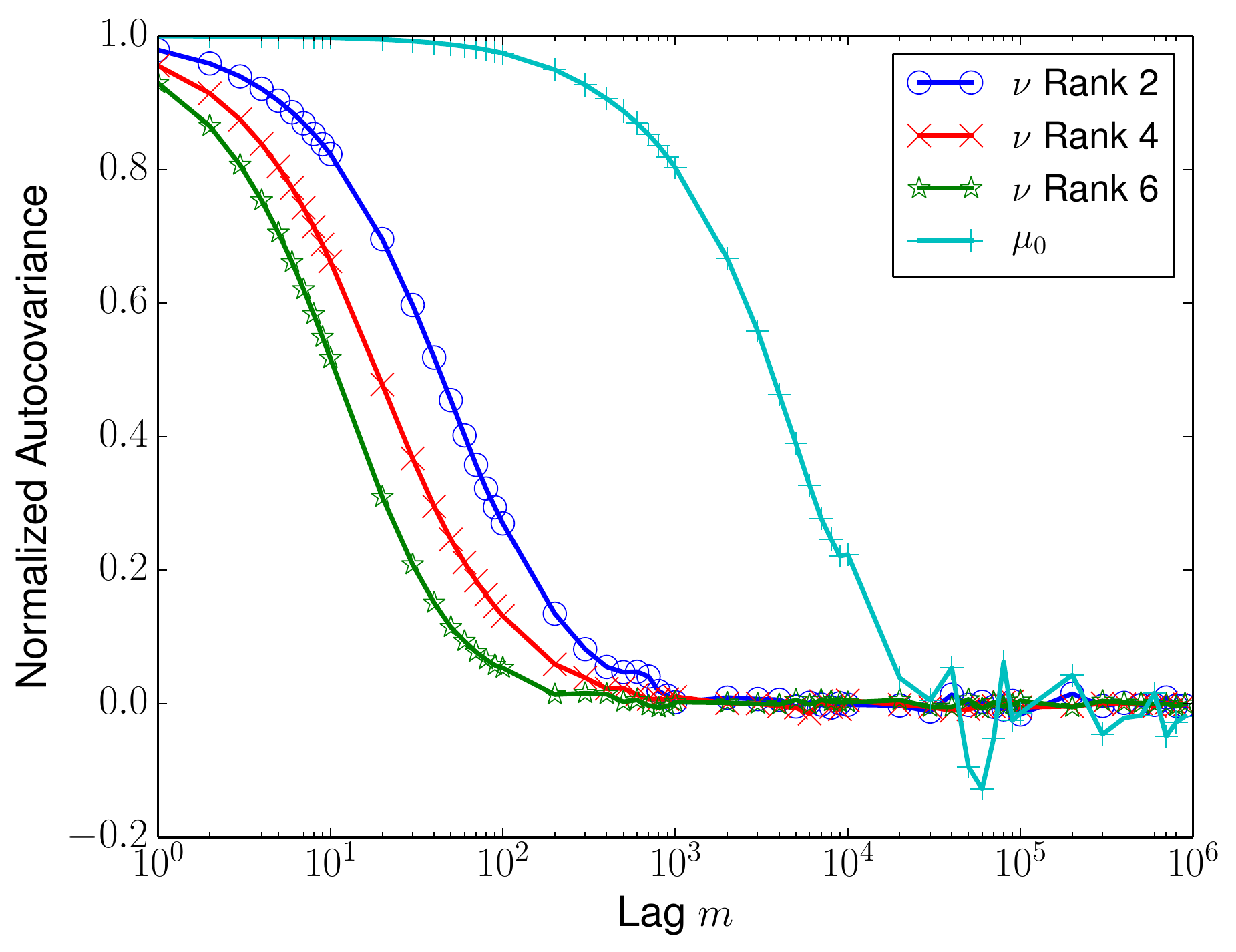}}

  \end{center}

  \caption{Behavior of MCMC Algorithms \ref{a1} and \ref{a2} for the
    Bayesian Inverse problem with observational noise $\gamma =
    0.01$. Notation as in Figure \ref{f:bayesinverse_post}.  The
    distribution of $u(x)$, shown in Figure (a), for both the optimized Rank 6 $\nu$, and
    the posterior $\mu$ overlap, and are close to the truth.  Unlike
    the case of $\gamma=0.1$, Figures (c) and (d) show improvement in
    using $\nu$ Rank 6 within Algorithm \ref{a2}, over Ranks 2 and 4.
    However, all three cases of Algorithm \ref{a2} are at
    least two orders of magnitude better than Algorithm \ref{a1},
    which uses only $\mu_0$.  Shaded regions enclose $\pm$ one
    standard deviation.}
  \label{f:bayesinverse_post01}
\end{figure}

\subsection{Conditioned Diffusion Process}
\label{ssec:CDP}

Next, we consider measure $\mu$ given by \eqref{e:target} in the case
where $\mu_0$ is a unit Brownian bridge connecting $0$ to $1$ on the
interval $(0,1)$, and
$$\Phi = \frac{1}{4\epsilon^2}\int_0^1 \Bigl(1-u(t)^2\Bigr)^2 dt,$$
a double well potential. This also has an interpretation as a
conditioned diffusion \cite{reznikoff2005invariant}.  Note that $m_0 =
t$ and $C_0^{-1} = -\tfrac{1}{2}\tfrac{d^2}{dt^2}$ with
$D(C_0^{-1})=H^2(I)\cap H^1_0(I)$ with $I=(0,1).$

We seek the approximating measure $\nu$ in the form $N(m(t),C)$ with
$(m,B)$ to be varied, where
\[
C^{-1} = C_0^{-1} + \tfrac{1}{2\epsilon^2}B
\]
and $B$ is either constant,$B \in \R$, or $B:I \to \R$ is a function
viewed as a multiplication operator.

We examine both cases of this problem, performing the optimization,
followed by pCN sampling.  The results were then compared against the
uninformed prior, $\mu_0 = N(m_0, C_0)$.  For the constant $B$ case,
no preconditioning on $B$ was performed, and the initial guess was $B
= 1$. For $B = B(t)$, a Tikhonov-Phillips regularization was
introduced,
\begin{equation}
  \Dkl^\alpha = \Dkl + \frac{\alpha}{2}\int \dot B^2 dt, \quad \alpha = 10^{-2}.
\end{equation}

For computing the gradients \eqref{e:preDJ_Gauss} and estimating
$\Dkl$,
\begin{subequations}
  \begin{align}
    D_m\Phi(v+m) &= \tfrac{1}{2\eps^2} (v+m)[(v+m)^2-1], \\
    \label{e:516b}
    D_B\Phi_{\nu_0}(v) &= \begin{cases}
      \frac{1}{4\eps^2}\int_0^1 v^2 dt & \text{$B$ constant}\\
      \frac{1}{4\eps^2} v^2 &\text{$B(t)$}
    \end{cases}.
  \end{align}
\end{subequations}
No preconditioning is applied for \eqref{e:516b} in the case that $B$
is a constant, while in the case that $B(t)$ is variable, the
preconditioned gradient in $B$ is
\[
\left\{ -\alpha\tfrac{d^2}{dt^2}\right\}^{-1}\left(\E^{\nu_0}(\Delta_0
  D_\theta \Delta_0) - \E^{\nu_0}(\Delta_0) \E^{\nu_0}(D_\theta
  \Delta_0)\right)+ B.
\]
Because of the regularization, we must invert $-d^2/dt^2$, requiring
the specification of boundary conditions.  By a symmetry argument, we
specify the Neumann boundary condition, $B'(0) = 0$.  At the other
endpoint, we specify the Dirichlet condition $B(1)= V''(1)=2$, a ``far
field'' approximation.  

The common parameters used are:
\begin{itemize}
\item The temperature $\epsilon = 0.05$;
\item There were $99$ uniformly spaced grid points in $(0,1)$;
\item As the endpoints of the mean path are 0 and 1, we constrained
  our paths to lie in $[0, 1.5]$;
\item $B$ and $B(t)$ were constrained to lie in $[10^{-3}, 10^1]$, to
  ensure positivity of the spectrum;
\item The standard second order centered finite difference scheme was
  used for $C_0^{-1}$;
\item Trapezoidal rule quadrature was used to estimate $\int_0^1 \dot
  m^2$ and $\int_0^1 \dot B^2 dt$, with second order centered
  differences used to estimate the derivatives;
\item $m_0(t) = t$, $B_0 = 1$, $B_0(t) =V''(1) $, the right endpoint
  value;
\item $10^5$ iterations of the Robbins-Monro algorithm are performed
  with $10^2$ samples per iteration;
\item $a_0=2$ and $a_n = a_0 n^{-3/5}$;
\item pCN Algorithms \ref{a1} and \ref{a2} are implemented with $\beta
  =0.6$, and $10^6$ iterations.
\end{itemize}
Our results are favorable, and the outcome of the Robbins-Monro
Algorithm \ref{a:RMforKL} is shown in Figures \ref{f:constB} and
\ref{f:varB} for the additive potentials $B$ and $B(t)$, respectively.
The means and potentials converge in both the constant and variable
cases.  Figure \ref{f:ac_dkl} confirms that in both cases, $\Dkl$ and
$\Dkl^\alpha$ are reduced during the algorithm.

\begin{figure}
  \begin{center}
    \subfigure[$m_n(t)$ versus
    $n$.]{\includegraphics[width=6.25cm]{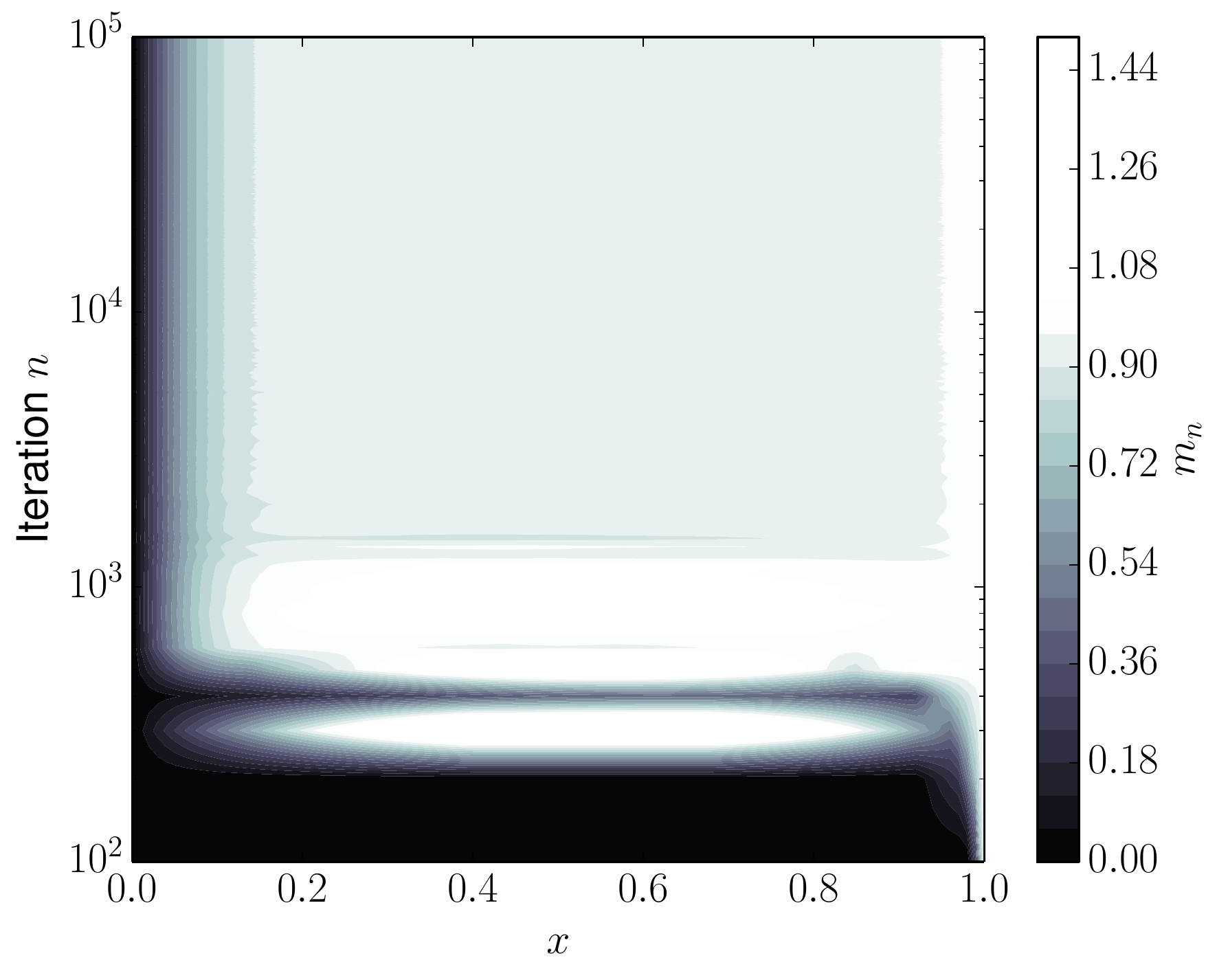}}
    \subfigure[$m_n(t)$ at Particular
    Iterations]{\includegraphics[width=6.25cm]{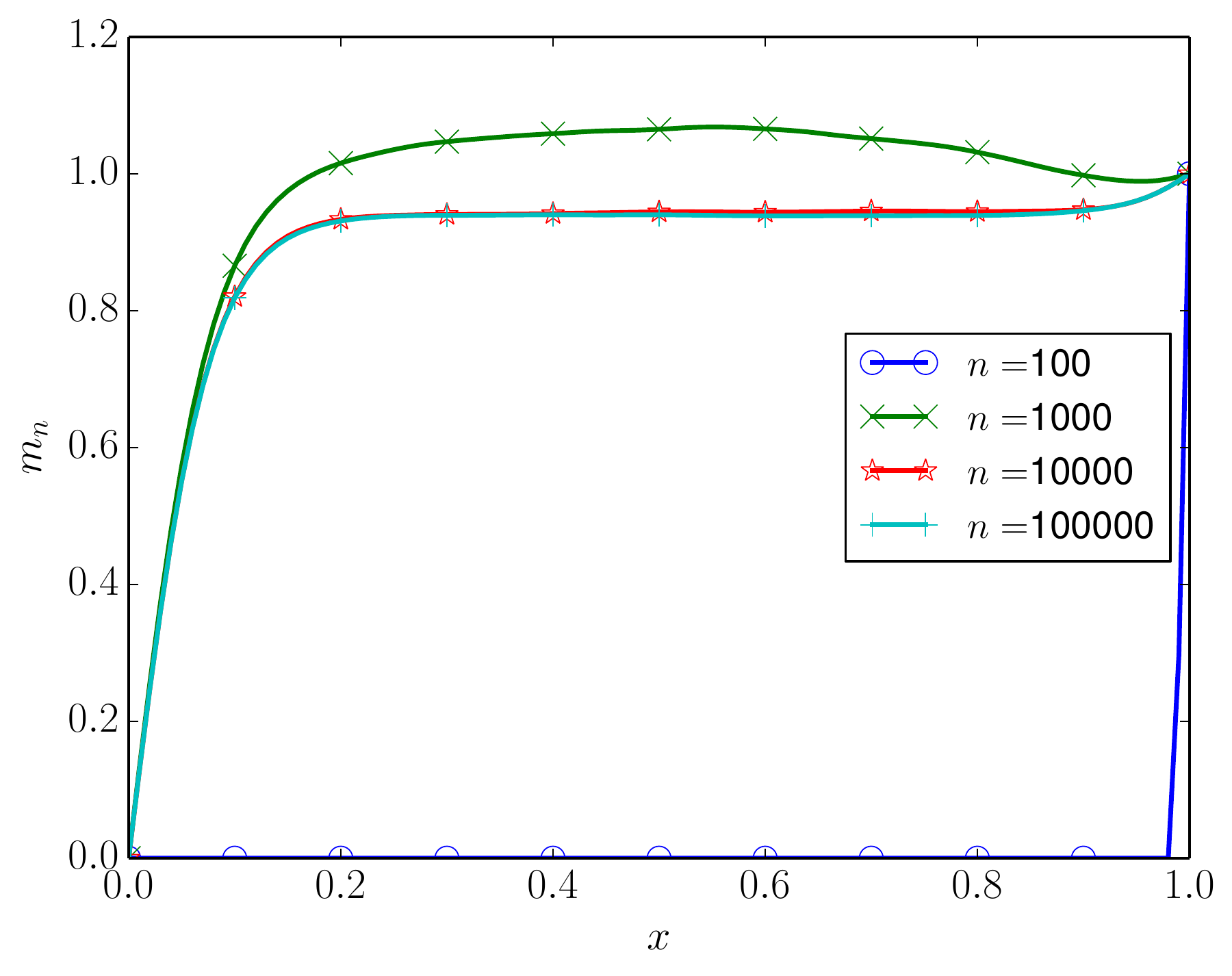}}

    \subfigure[$B_n$ versus
    $n$.]{\includegraphics[width=6.25cm]{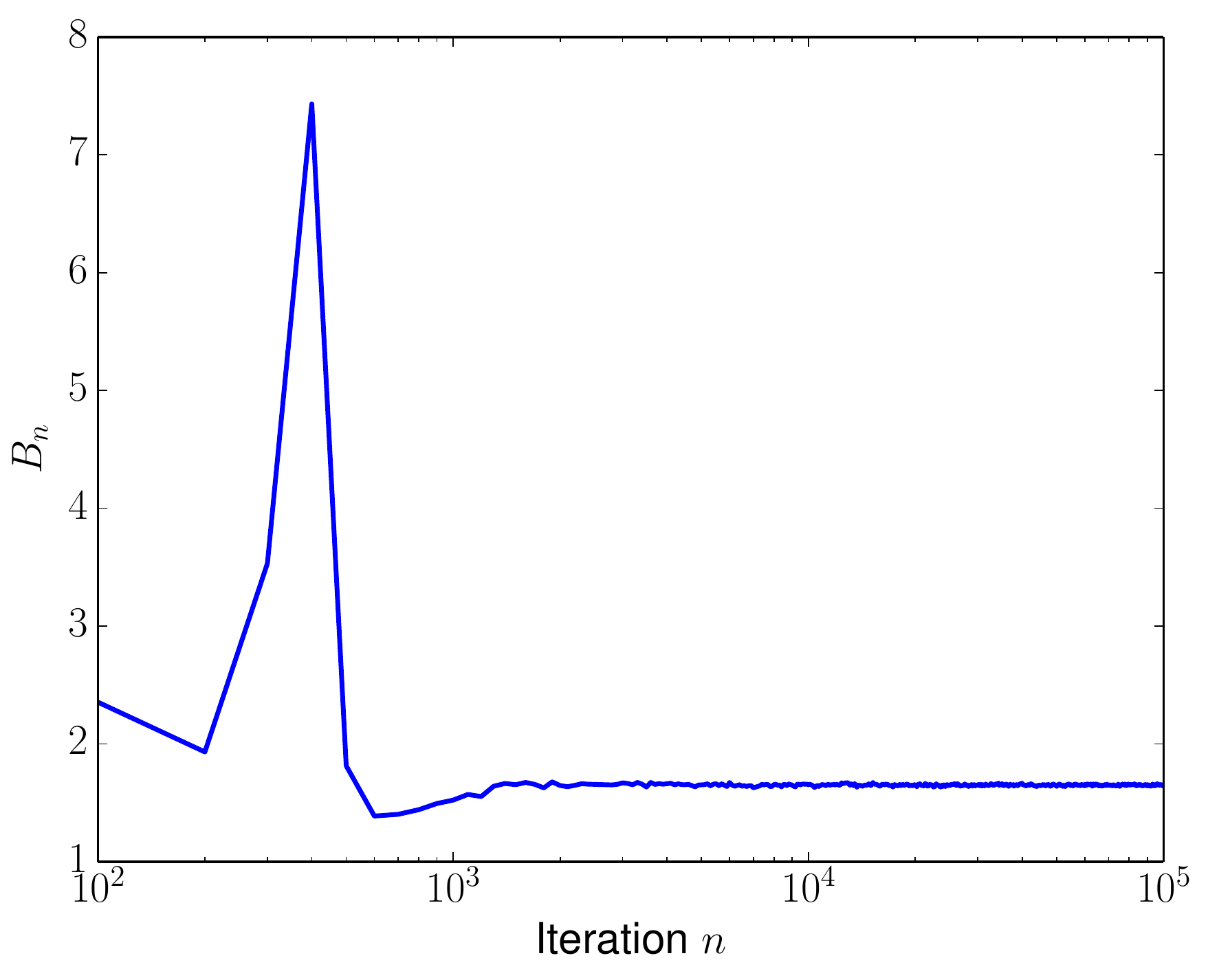}}
  \end{center}

  \caption{Convergence of the Robbins-Monro Algorithm \ref{a:RMforKL}
    applied to the Conditioned Diffusion problem, in the case of
    constant inverse covariance potential $B$.  Figure (a) shows
    evolution of $m_n(t)$ with $n$; Figure (b) shows $m_n(t)$ at
    particular $n$.  Figure (c) shows convergence of the $B_n$ constant.}
  \label{f:constB}
\end{figure}

\begin{figure}
  \begin{center}

    \subfigure[$m_n(t)$ versus
    $n$.]{\includegraphics[width=6.25cm]{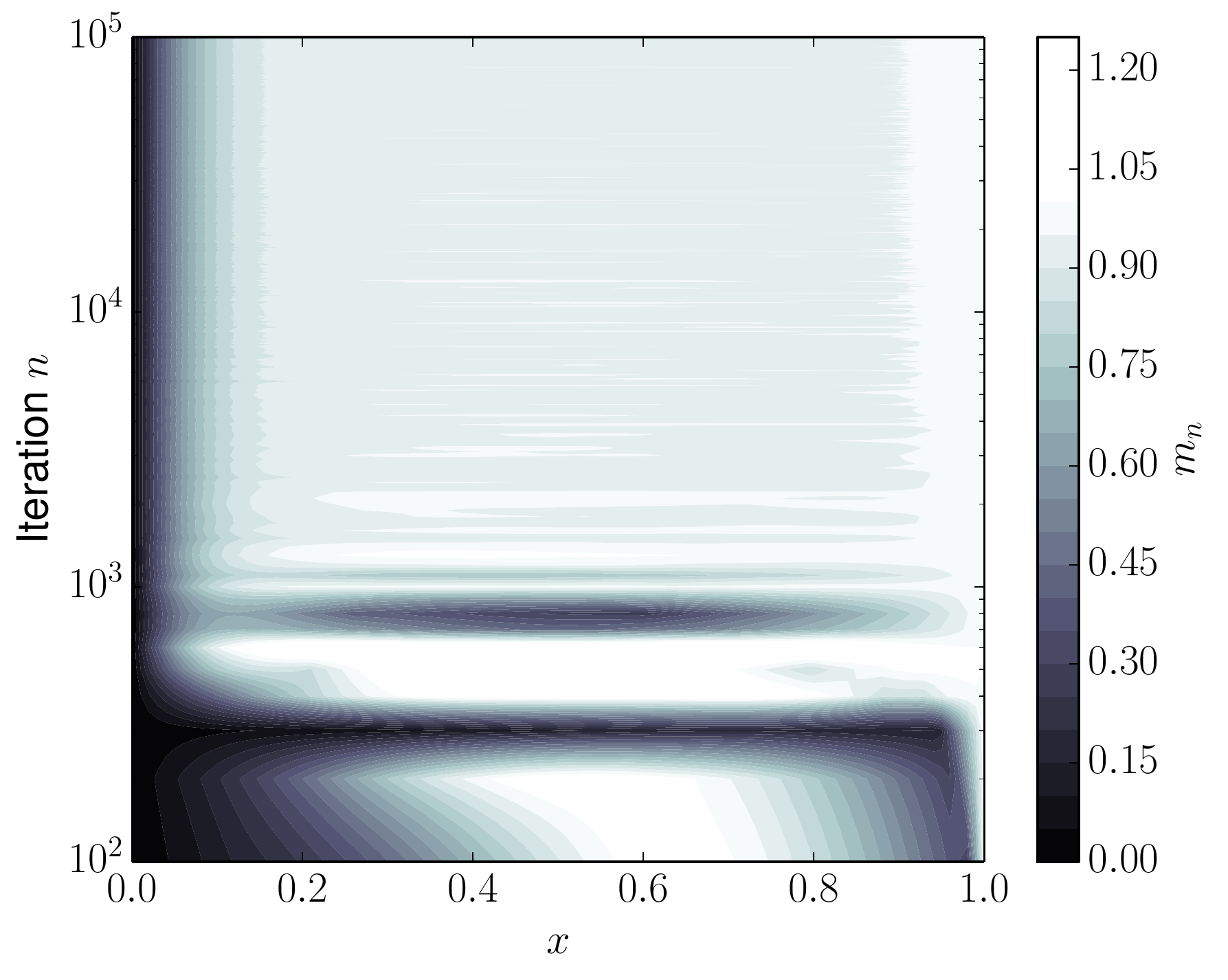}}
    \subfigure[$m_n(t)$ at Particular
    Iterations.]{\includegraphics[width=6.25cm]{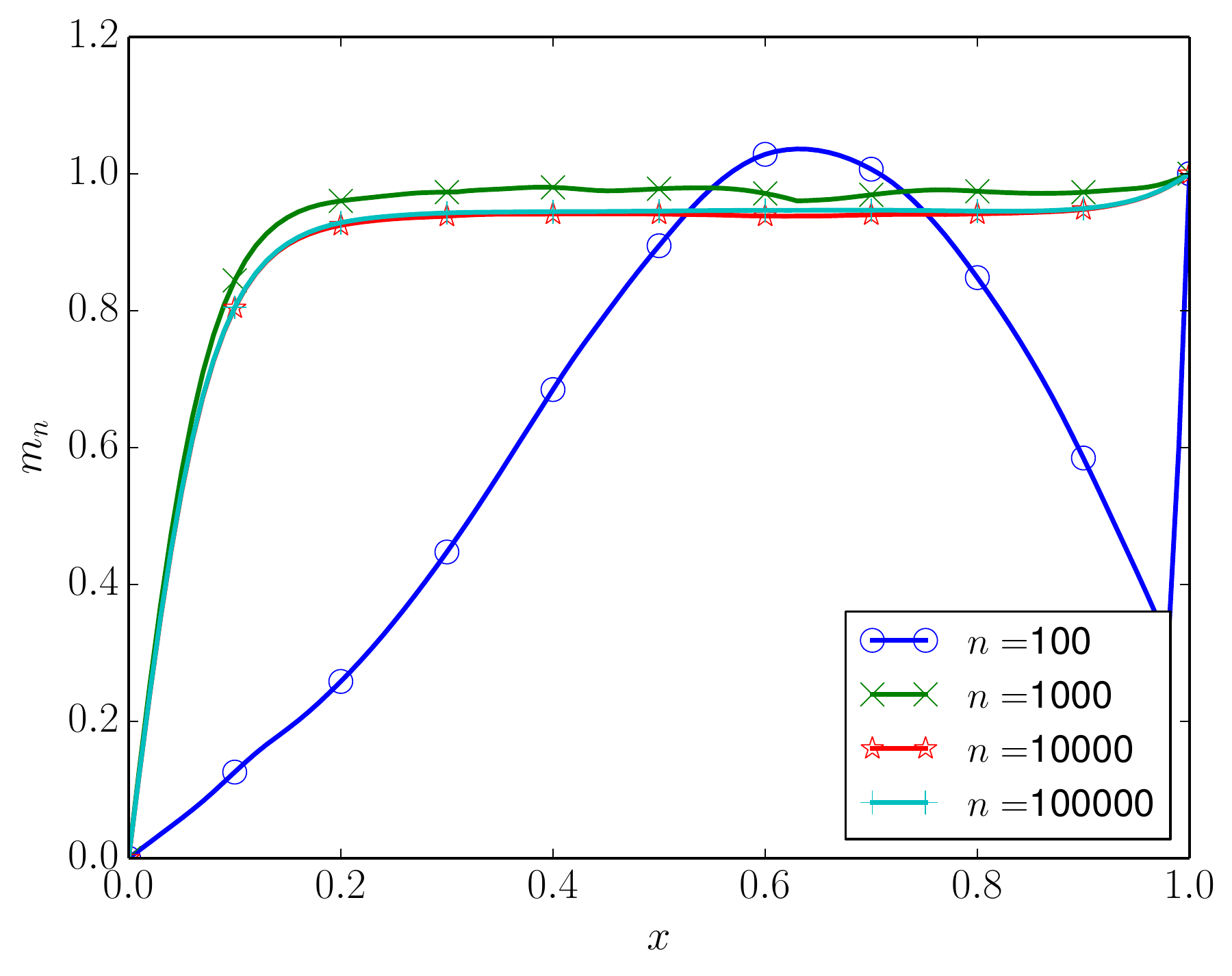}}

    \subfigure[$B_n(t)$ versus
    $n$.]{\includegraphics[width=6.25cm]{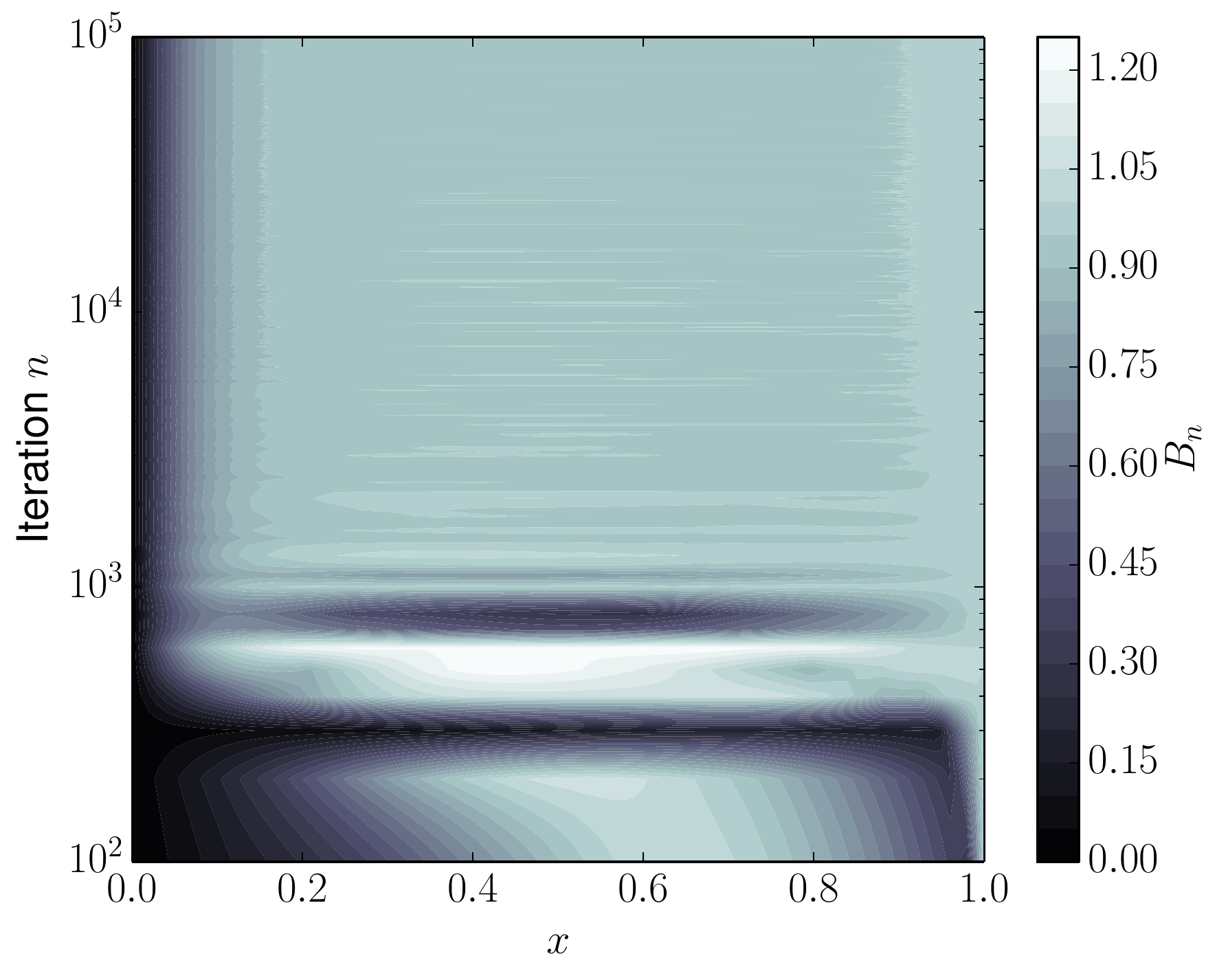}}
    \subfigure[$B_n(t)$ at Particular
    Iterations. ]{\includegraphics[width=6.25cm]{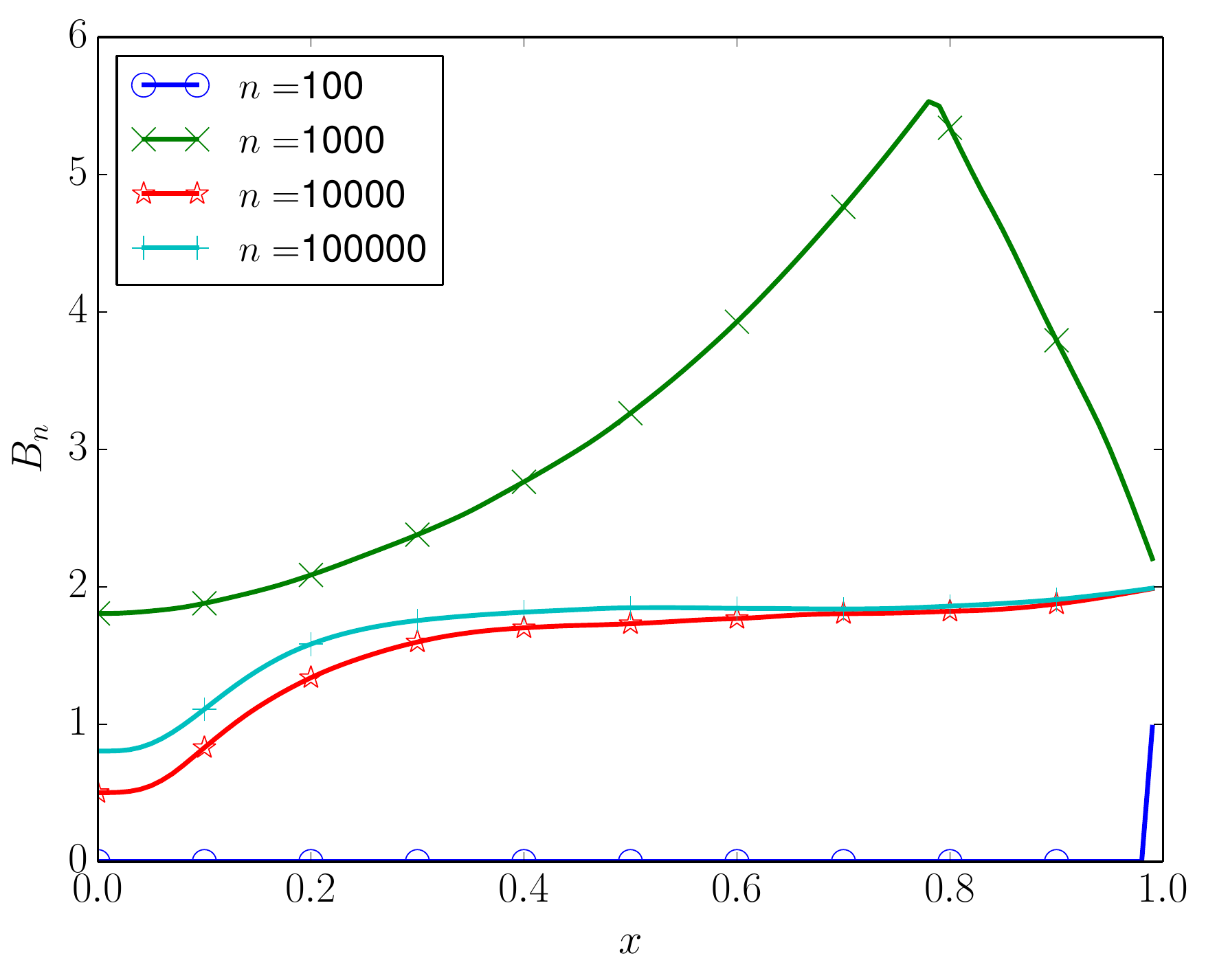}}

  \end{center}
  \caption{Convergence of the Robbins-Monro Algorithm \ref{a:RMforKL}
    applied to the Conditioned Diffusion problem, in the case of
    variable inverse covariance potential $B(t)$.  Figure (a) shows
    evolution of $m_n(t)$ with $n$; Figure (b) shows $m_n(t)$ at
    particular $n$.
    Figure (c) shows evolution of $B_n(t)$ with $n$; Figure (d) shows
    $B_n(t)$ at particular $n$.}
  \label{f:varB}

\end{figure}

\begin{figure}
  \begin{center}
    \subfigure{\includegraphics[width=6.25cm]{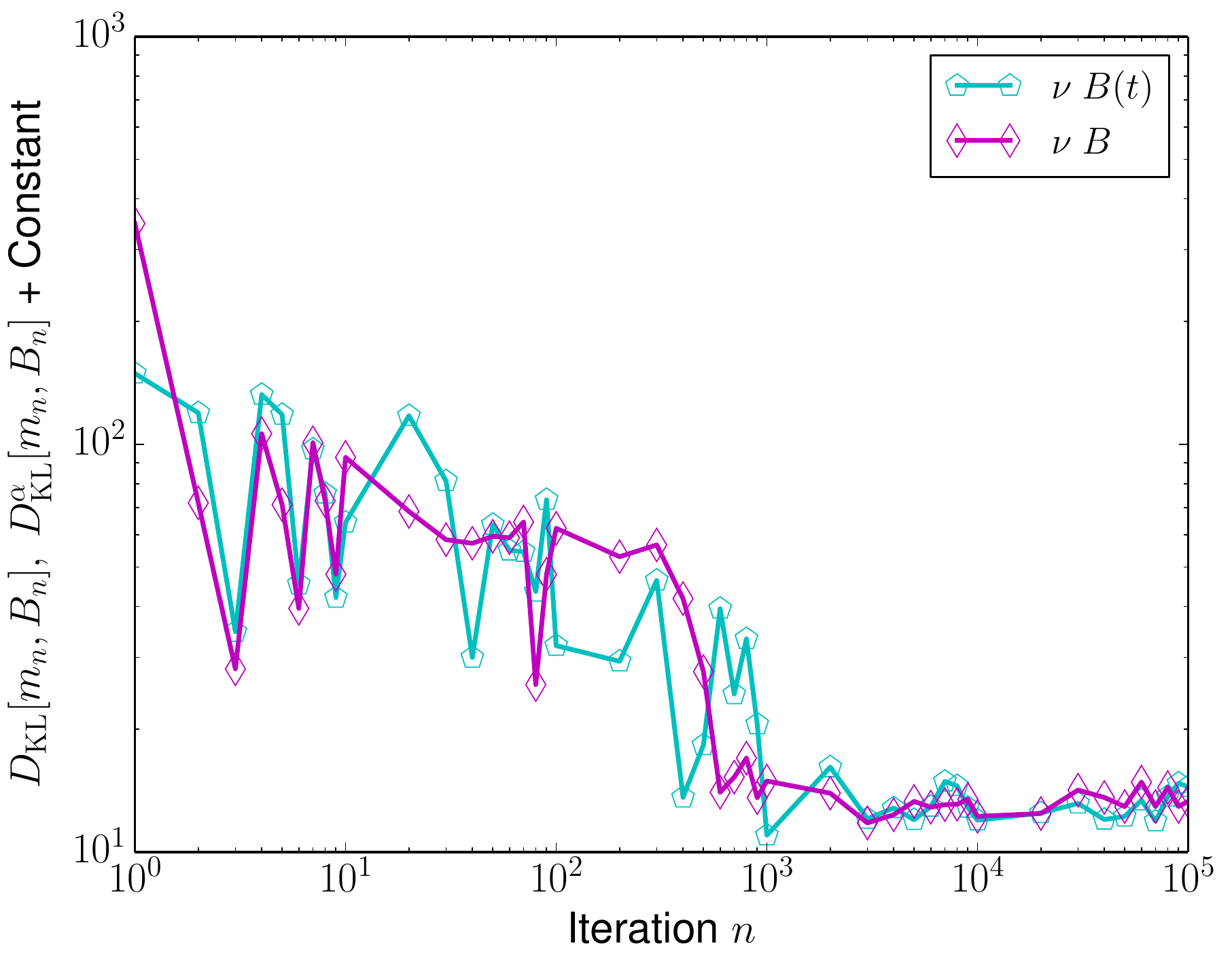}}
    \subfigure{\includegraphics[width=6.25cm]{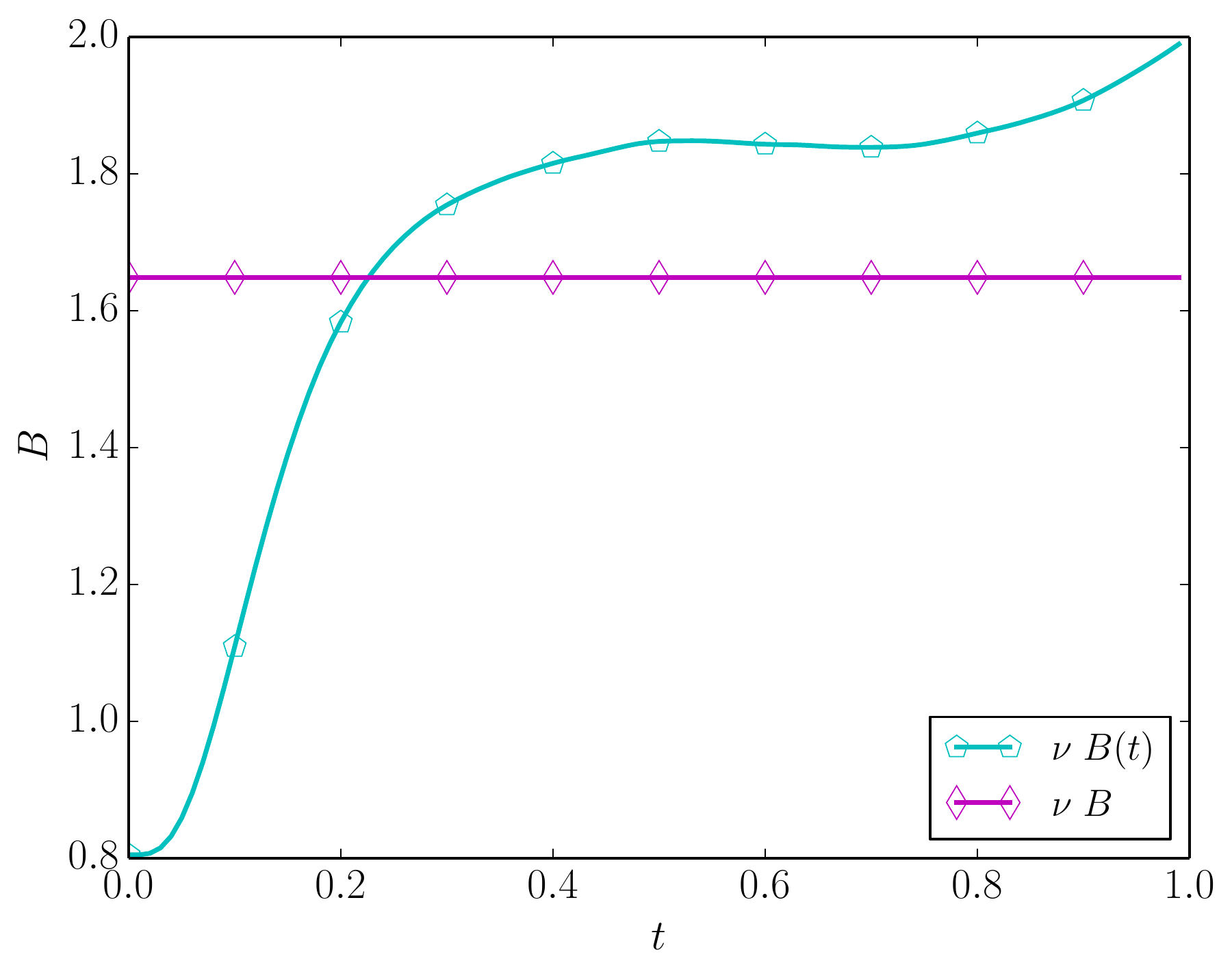}}
  \end{center}
  \caption{Minimization of $\Dkl^\alpha$ (for $B(t)$) and $\Dkl$ (for
    $B$) during Robbins-Monro Algorithm \ref{a:RMforKL} for the
    Conditioned Diffusion problem.  Also plotted is a comparison of
    $B$ and $B(t)$ for the optimized $\nu$ distributions.}
  \label{f:ac_dkl}
\end{figure}

The important comparison is when we sample the posterior using
these as the proposal distributions in MCMC Algorithms \ref{a1} and
\ref{a2}.  The results for this are given in Figure
\ref{f:allencahnpCN}.  Here, we compare both the prior and posterior
means and variances, along with the acceptance rates.  The means are
all in reasonable agreement, with the exception of the $m_0$, which
was to be expected.  The variances indicate that the sampling done
using $\mu_0$ has not quite converged, which is why it is far from the
posterior variances obtained from the optimized $\nu$'s, which are
quite close.  The optimized prior variances recover the plateau
between $t=0.2$ to $t=0.9$, but could not resolve the peak near $0.1$.
Variable $B(t)$ captures some of this information in that it has a
maximum in the right location, but of a smaller amplitude.  However,
when one standard deviation about the mean is plotted, it is difficult
to see this disagreement in variance between the reference and target
measures.

In Figure \ref{f:allencahnpCN_accept} we present the acceptance rate and
autocovariance, to assess the performance of Algorithms \ref{a1} and \ref{a2}.  For both the constant and variable potential cases,
there is better than an order of magnitude improvement over $\mu_0$.
In this case, it is difficult to distinguish an appreciable difference in
performance between $B(t)$ and $B$.

\begin{figure}
  \begin{center}
    \subfigure[Mean]{\includegraphics[width=6.25cm]{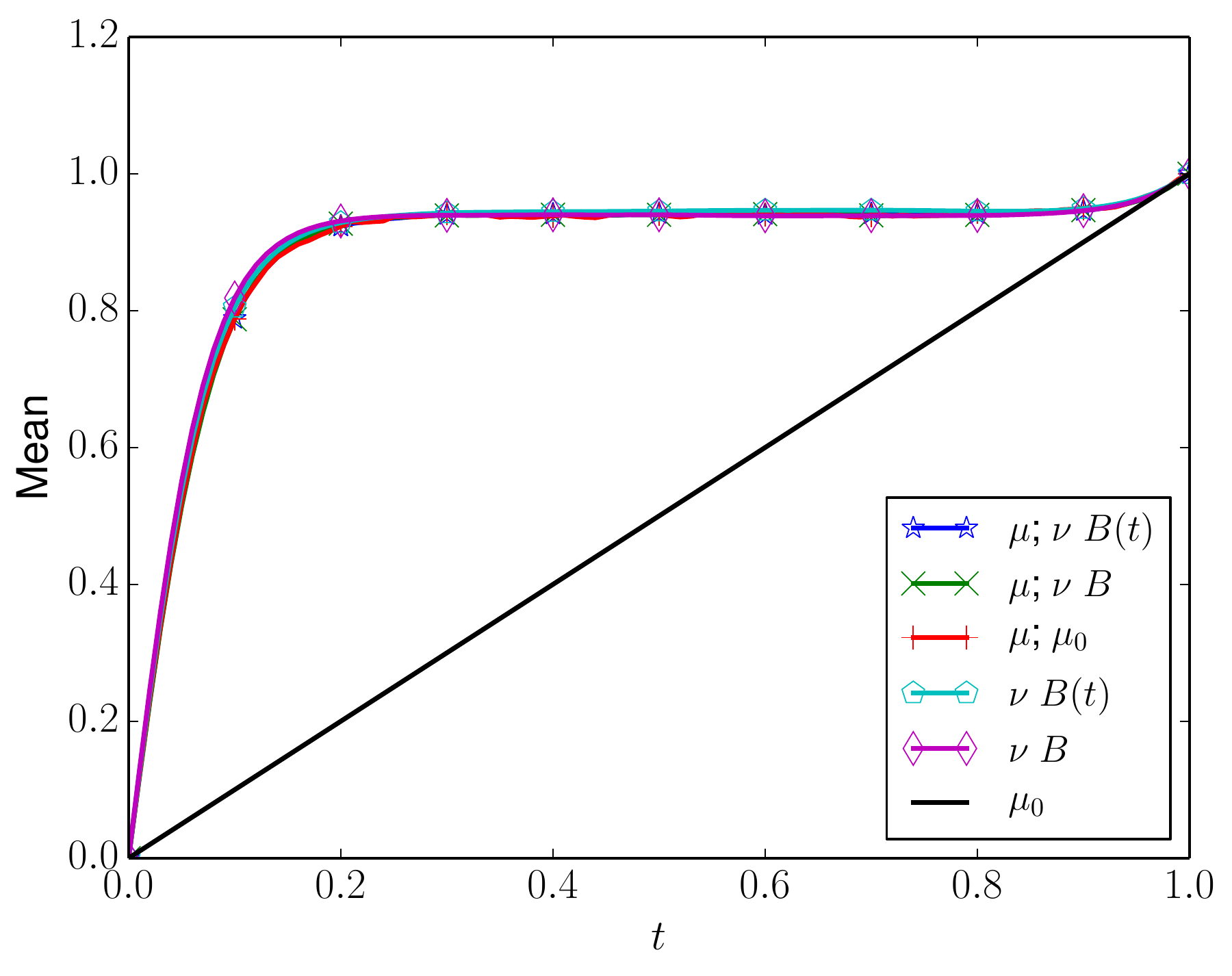}}
    \subfigure[Variance]{\includegraphics[width=6.25cm]{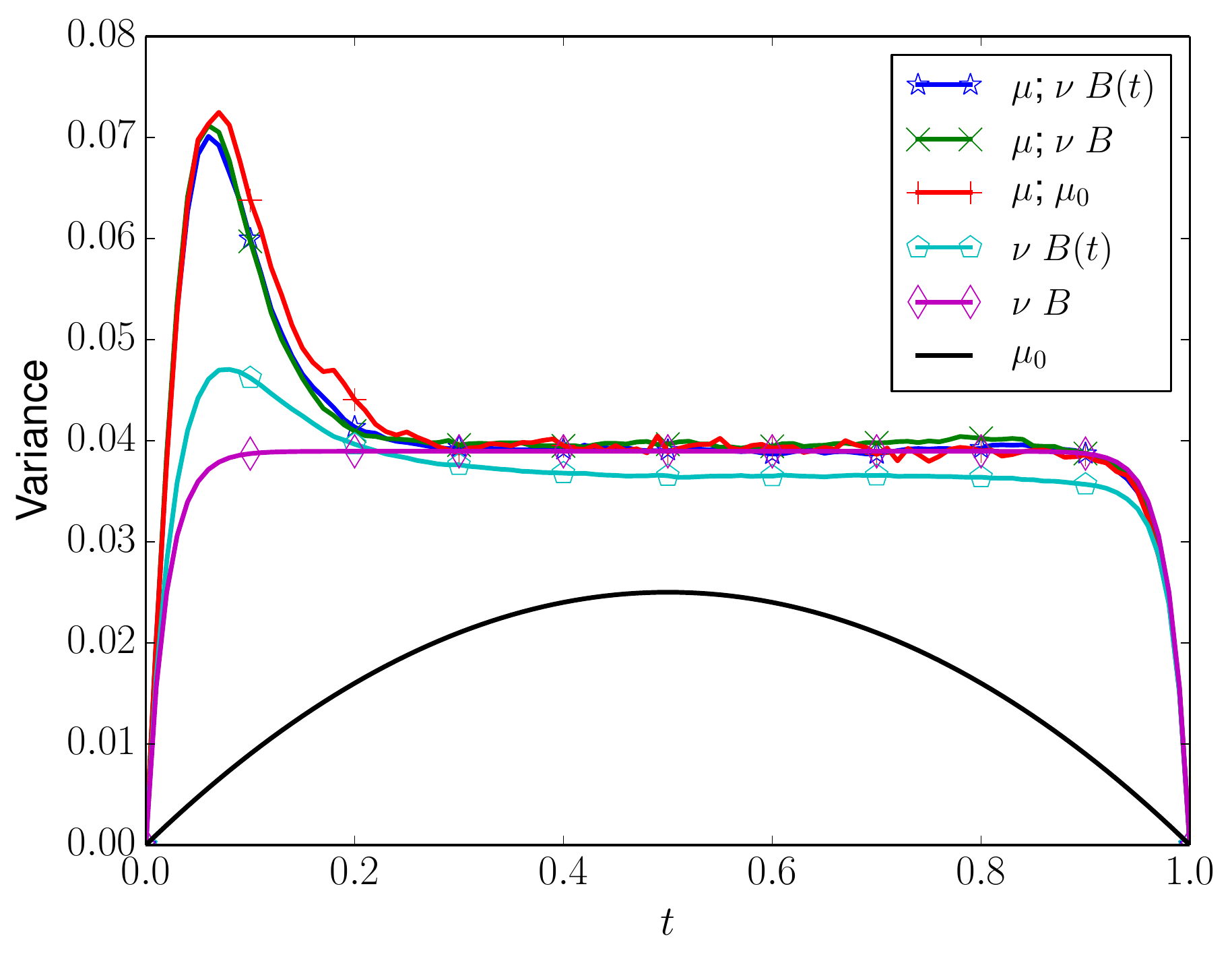}}

    \subfigure[$u(t$)]{\includegraphics[width=6.25cm]{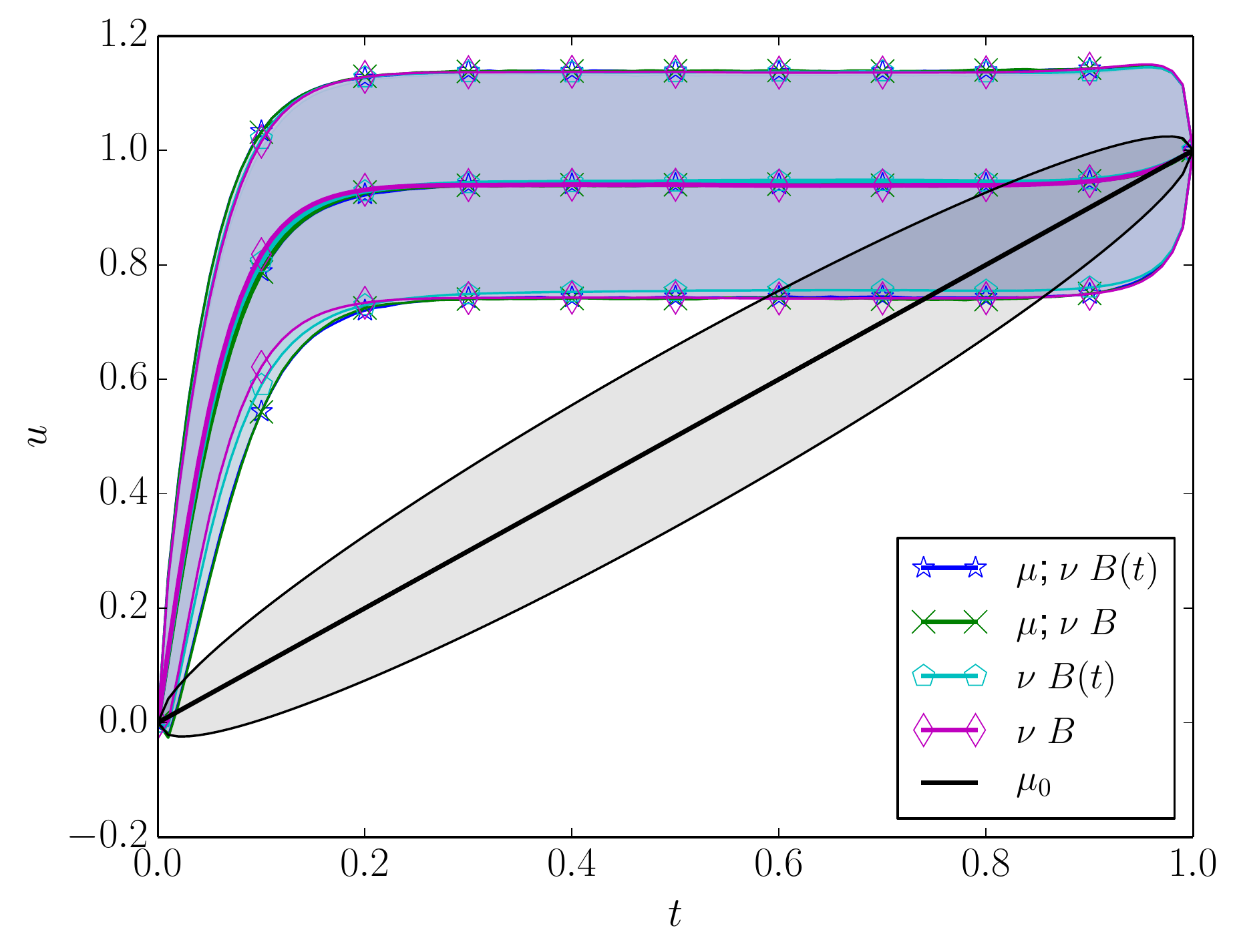}}
  \end{center}
  \caption{Behavior of MCMC Algorithms \ref{a1} and \ref{a2} for the
    Conditioned Diffusion problem.  The true posterior distribution,
    $\mu$, is sampled using $\mu_0$ (Algorithm \ref{a1}) and $\nu$,
    for both constant and variable potentials, $B$ and $B(t)$,
    (Algorithm \ref{a2}).  The resulting posterior approximations are
    denoted by $\mu; \mu_0$ (Algorithm \ref{a1}), and $\mu; \nu B$ and
    $\mu; \nu B(t)$ (Algorithm \ref{a2}).  The curves denoted $\mu_0$,
    and $\nu\; B$ and $\nu\; B(t)$, are the prior and best fit Gaussians.  For both optimized $\nu$'s, there is good
    agreement between the means and the posterior mean.  The variances
    are consistent, but the posterior shows a peak near $t=0.1$ that
    is not captured by $\nu$ distributions.  With the exception
    of $\mu_0$, there is good general agreement amongst the distributions of
    $u(t)$.  Shaded regions enclose $\pm$ one standard deviation.}
  \label{f:allencahnpCN}
\end{figure}

\begin{figure}

\begin{center}
  \subfigure[Acceptance
  Rate]{\includegraphics[width=6.25cm]{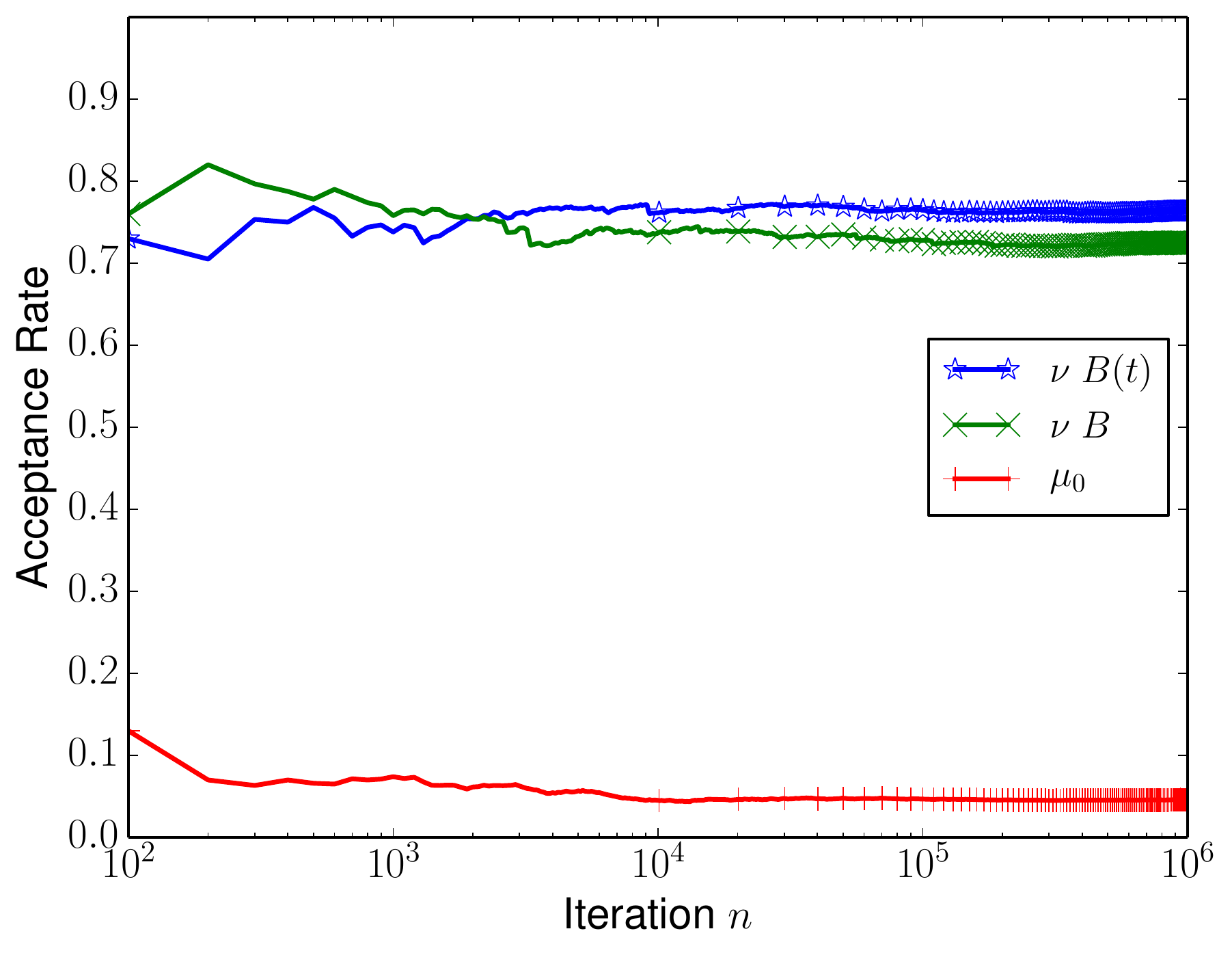}}
  \subfigure[Autocovariance]{\includegraphics[width=6.25cm]{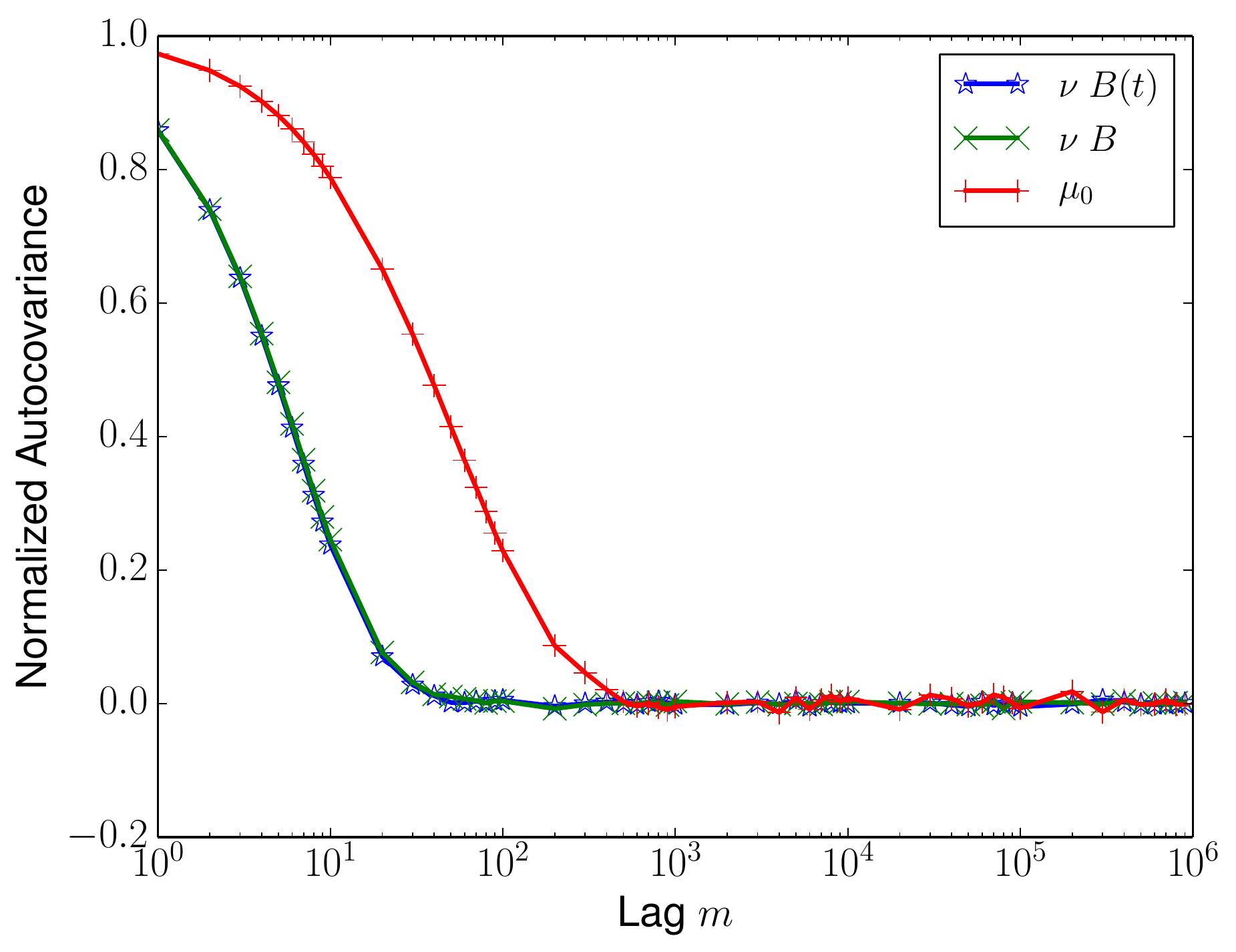}}
\end{center}
\caption{Performance of MCMC Algorithms \ref{a1} and \ref{a2} for the
  Conditioned Diffusion problem.  When $\mu_0$ is used for proposals
  in Algorithm \ref{a1}, the acceptance rate is far beneath either
  best fit Gaussian, $\nu\; B$ and $\nu\; B(t)$, within Algorithm
  \ref{a2}.  Variable $B(t)$ provides nominal improvement over
  constant $B$.}

\label{f:allencahnpCN_accept}
\end{figure}

\section{Conclusions}
\label{sec:C}
We have demonstrated a viable computational methodology for finding
the best Gaussian approximation to measures defined on a Hilbert space
of functions, using the Kullback-Leibler divergence as measure of fit.
We have parameterized the covariance via low rank matrices, or via a
Schr\"odinger potential in an inverse covariance representation, and
represented the mean nonparametrically, as a function; these
representations are guided by knowledge and understanding of the
properties of the underlying calculus of variations problem as
described in \cite{PSSW13}.  Computational results demonstrate that,
in certain natural parameter regimes, the Gaussian approximations are
good in the sense that they give estimates of mean and covariance
which are close to the true mean and covariance under the target
measure of interest, and that they consequently can be used to
construct efficient MCMC methods to probe the posterior distribution.

Further work is needed to explore the methodology in larger scale
applications and to develop application-specific parameterizations of
the covariance in this context. It would also be interesting to
combine the Robbins-Monro minimization with the MCMC method to
construct an adaptive MCMC method. On the analysis side it would be
instructive to demonstrate improved spectral gaps for the resulting
MCMC methods, with respect to observational noise (resp. temperature)
within the context of Bayesian inverse problems (resp. conditioned
diffusions), generalizing the analysis of Section
\ref{s:scalar_example}.

\appendix

\section{Scalar Example}
\label{s:SE}
In this section of the appendix we provide further details relating to
the motivational scalar example from section \ref{s:scalar_example}.

\subsection{Scalar Sampling}
\label{s:scalar}
Recall the scalar problem from Section \ref{s:scalar_example}.  One of
the motivations for considering such a problem is that many of the
calculations for $\Dnm$ are explicit.  Indeed, If $\nu = N(m,
\sigma^2)$ is the Gaussian which we intend to fit against $\mu$, then
\begin{equation}
  \label{e:scalar_dkl}
  \begin{split}
    \Dkl(\nu||\mu) &= \E^{\nu}\left[V(x) - \tfrac{1}{2\sigma^2}
      |x-m|^2\right] +\log Z_\mu - \log Z_\nu\\
    & = \E^{\nu_0}[V(y +m)] - \tfrac{1}{2} + \log Z_\mu - \log \sigma
    -
    \log\sqrt{2\pi},\\
    & = \E^{\nu_0}[V(y +m)]- \log \sigma + \text{Constant}.
  \end{split}
\end{equation}
The derivatives then take the simplified form
\begin{subequations}
  \label{e:dkl_derivs_scalar}
  \begin{align}
    D_m \Dkl & = \E^{\nu_0}[D_yV(y +m)], \\
    D_\sigma \Dkl & = \E^{\nu_0}[V(y +m)\sigma^{-3}(y^2- \sigma^2 )] -
    \sigma^{-1}.
  \end{align}
\end{subequations}
For some choices of $V(x)$, including \eqref{e:scalar_potential}, the
above expectations can be computed analytically, and the critical
points of \eqref{e:dkl_derivs_scalar} can then be obtained by
classical root finding.  Thus, we will be able to compare the
Robbins-Monro solution against a deterministic one, making for an
excellent benchmark problem.

The parameters used in these computation are:
\begin{itemize}
\item $10^6$ iterations of the Robbins-Monro with $10^2$ samples per
  iterations;
\item $a_0 = .1$ and $a_n = a_0 n^{-3/5}$;
\item $m_0 = 0$ and $\sigma_0 = 1$;
\item $m$ is constrained to the interval $[-.5,.5]$;
\item $\sigma$ is constrained to the interval $[10^{-3}, 10^0]$;
\item $10^6$ iterations of pCN, Algorithms \ref{a1}, \ref{a2}, are
  performed with $\beta = 1.$
\end{itemize}
While $10^6$ iterations of Robbins-Monro are used, Figure
\ref{f:scalar_conv} indicates that there is good agreement after
$10^3$ iterations. More iterations than needed are used in all of our
examples, to ensure convergence.  With appropriate convergence
diagnostics, it may be possible to identify a convenient termination
time.

\subsection{Analysis of the Sampling Performance}
\label{s:performance}
While the numerical experiments confirm our intuition, for this
example, the acceptance rate can be studied analytically.  Let
\begin{equation}
  \label{e:Tscalar}
  T(u,v) = \tfrac{1}{\eps}(u^4-v^4) +
  \left(\tfrac{1}{2\eps}-\tfrac{1}{2\sigma^2} \right)(u^2-v^2) + \tfrac{m}{\sigma^2}(u-v).
\end{equation} 
The acceptance probability for proposal $v$, given current state $u$,
is then $1 \wedge e^T$.  This is valid not only for our new Algorithm
\ref{a2}, using the optimized distribution $\nu=N(m,\sigma^2)$, but
also for Algorithm \ref{a1}, which uses the prior $\mu_0$, by taking
$m\mapsto 0$ and $\sigma \mapsto 1$ in \eqref{e:Tscalar}.

For an independence sampler, where proposals are generated solely from
$\nu$, we show that the expected acceptance rate of the optimized
$\nu$ tends to one as $\eps \to 0$. In contrast, when the prior,
$\mu_0 = N(0,1)$ is used as the proposal distribution, the acceptance
rate will be driven to zero.  We emphasize this case as the independence
sampler should have the poorest acceptance rate.  If instead of using
an independence sampler, we use a Crank-Nicolson proposal with
sufficiently small steps, favorable acceptance rates can be recovered
when $\mu_0$ is used for proposals.

These results are partially based on the following lemma, which
provides a lower bound on the acceptance rate:
\begin{lemma}[Lemma B.1 of \cite{Beskos:2009vx}]
  \label{l:accept}
  Let $Y$ be a real-valued random variable and $\gamma >0$.  Then
  \begin{equation*}
    \E[1\wedge e^Y] \geq e^{-\gamma} \left( 1 - \gamma^{-1}{\E[|Y|]} \right).
  \end{equation*}
\end{lemma}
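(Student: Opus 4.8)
The plan is to obtain the bound by a simple truncation argument combined with Markov's inequality, reproducing the proof of Lemma B.1 of \cite{Beskos:2009vx}. First, dispose of the degenerate case: if $\E[|Y|] \ge \gamma$ then the right-hand side $e^{-\gamma}(1 - \gamma^{-1}\E[|Y|])$ is non-positive, while $1 \wedge e^Y \ge 0$ pointwise and hence in expectation, so the inequality holds trivially. Thus we may assume $\E[|Y|] < \gamma$, and in particular that $Y$ is integrable so that all the quantities below are well defined.

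The key step is a pointwise lower bound on $1 \wedge e^Y$. On the event $\{Y \ge -\gamma\}$ we have $e^Y \ge e^{-\gamma}$, and since $\gamma > 0$ also $1 \ge e^{-\gamma}$, so $1 \wedge e^Y \ge e^{-\gamma}$ there; on the complementary event we simply use $1 \wedge e^Y \ge 0$. Combining the two cases, $1 \wedge e^Y \ge e^{-\gamma}\,\mathbf{1}_{\{Y \ge -\gamma\}}$ almost surely. Taking expectations gives $\E[1 \wedge e^Y] \ge e^{-\gamma}\,\bbP(Y \ge -\gamma) = e^{-\gamma}\bigl(1 - \bbP(Y < -\gamma)\bigr)$.

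It remains to control the tail probability. Since $\{Y < -\gamma\} \subseteq \{|Y| \ge \gamma\}$, Markov's inequality yields $\bbP(Y < -\gamma) \le \bbP(|Y| \ge \gamma) \le \gamma^{-1}\E[|Y|]$. Substituting this into the previous display gives $\E[1 \wedge e^Y] \ge e^{-\gamma}\bigl(1 - \gamma^{-1}\E[|Y|]\bigr)$, which is the claim. There is no genuine obstacle in this argument; the only point requiring a moment's care is handling the degenerate case $\E[|Y|] \ge \gamma$ at the outset, so that the subtraction $1 - \gamma^{-1}\E[|Y|]$ is meaningful and the chain of inequalities is not vacuously misapplied.
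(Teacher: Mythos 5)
Your proof is correct: the pointwise bound $1\wedge e^Y\ge e^{-\gamma}\mathbf{1}_{\{Y\ge-\gamma\}}$ followed by Markov's inequality on the tail $\bbP(Y<-\gamma)\le\gamma^{-1}\E[|Y|]$ is exactly the standard argument, and your handling of the degenerate case $\E[|Y|]\ge\gamma$ is appropriate. The paper itself offers no proof — it simply cites the result as Lemma B.1 of \cite{Beskos:2009vx} — and your argument reproduces the proof given there, so there is nothing to compare beyond noting that your write-up is complete and self-contained.
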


\begin{proposition}
  \label{p:optimizednu}
  Assume $\nu$ is the $\Dkl$ optimized distribution for
  \eqref{e:scalar_dist} with potential \eqref{e:scalar_potential}.
  Furthermore, assume that $\mu^\eps$ is sampled using Algorithm
  \ref{a2} with $\beta=1$, and that it has reached stationarity.  Then
  $\E[|T|]\leq 18 \eps + \bigo(\eps^2)$, and for any fixed $\gamma>0$,
  $\lim_{\eps \to 0}\E[1\wedge e^T] \geq e^{-\gamma}$.
\end{proposition}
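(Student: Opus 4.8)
The plan is to show that the one-step rejection probability is $\bigo(\eps)$ at stationarity and then conclude via Lemma~\ref{l:accept}. The starting point is that, by the explicit minimizer \eqref{e:sig_scalar}, we have $m=0$, so the last term of \eqref{e:Tscalar} drops and $T(u,v)=\Delta(u)-\Delta(v)$ with $\Delta(x):=\eps^{-1}V(x)-\tfrac{1}{2\sigma^2}x^2=\eps^{-1}x^4+\big(\tfrac{1}{2\eps}-\tfrac{1}{2\sigma^2}\big)x^2$. Since Algorithm~\ref{a2} with $\beta=1$ is an independence sampler, at stationarity the current state $u$ is distributed according to $\mu^\eps$ and the proposal $v$ according to $\nu=N(0,\sigma^2)$; hence by the triangle inequality $\E[|T|]\le\E^{\mu^\eps}[|\Delta|]+\E^{\nu}[|\Delta|]$, and it suffices to bound each marginal expectation by $9\eps+\bigo(\eps^2)$.

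First I would rescale to the scale on which both measures concentrate: writing $x=\sqrt{\eps}\,\xi$, the variable $\xi$ has density proportional to $\exp(-\eps\xi^4-\tfrac12\xi^2)$ under $\mu^\eps$, and $\xi\sim N(0,\sigma^2/\eps)$ under $\nu$. From \eqref{e:sig_scalar} one gets $\sigma^2/\eps=1-12\eps+\bigo(\eps^2)$, hence $\eps\big(\tfrac{1}{2\eps}-\tfrac{1}{2\sigma^2}\big)=\tfrac12-\tfrac{\eps}{2\sigma^2}=-6\eps+\bigo(\eps^2)$, these expansions being valid uniformly for $\eps$ in a bounded interval. In the rescaled variable this yields $\Delta=\eps\,\xi^4-6\eps\,\xi^2+\bigo(\eps^2)\,\xi^2$, so $|\Delta|\le\eps\,\xi^4+6\eps\,\xi^2+\bigo(\eps^2)\,\xi^2$.

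Next I would control the relevant moments uniformly in $\eps$. Under $\nu$ they are explicit: $\E^{\nu}[\xi^2]=\sigma^2/\eps=1+\bigo(\eps)$ and $\E^{\nu}[\xi^4]=3(\sigma^2/\eps)^2=3+\bigo(\eps)$. Under $\mu^\eps$ one uses $e^{-\eps\xi^4}\le1$ to bound the numerators, $\int\xi^{2k}e^{-\eps\xi^4-\xi^2/2}\,d\xi\le\int\xi^{2k}e^{-\xi^2/2}\,d\xi$, while the normalizing constant $\int e^{-\eps\xi^4-\xi^2/2}\,d\xi$ is bounded below by a positive constant for $\eps\in(0,1]$; by dominated convergence $\E^{\mu^\eps}[\xi^2]\to1$ and $\E^{\mu^\eps}[\xi^4]\to3$ as $\eps\to0$, and the same estimate controls the remainder. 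Substituting into the bound for $|\Delta|$ gives $\E^{\mu^\eps}[|\Delta|]\le3\eps+6\eps+\bigo(\eps^2)=9\eps+\bigo(\eps^2)$, and likewise for $\nu$, so $\E[|T|]\le18\eps+\bigo(\eps^2)$. Applying Lemma~\ref{l:accept} with $Y=T$ and any fixed $\gamma>0$ then gives $\E[1\wedge e^T]\ge e^{-\gamma}\big(1-\gamma^{-1}(18\eps+\bigo(\eps^2))\big)$, and sending $\eps\to0$ yields $\liminf_{\eps\to0}\E[1\wedge e^T]\ge e^{-\gamma}$.

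The only genuinely delicate step is the $\bigo(\eps^2)$ bookkeeping, and that is where I would be most careful: one must verify that the remainder in the expansion of $\tfrac{1}{2\eps}-\tfrac{1}{2\sigma^2}$ multiplies a fixed power of $\xi$ with expectation bounded uniformly as $\eps\to0$ under both measures, and that the normalizing constant of $\mu^\eps$ in the rescaled variable stays bounded away from zero. This is routine rather than structural, but it is the place where a careless estimate would break the argument; the rest is the triangle inequality, the explicit form of $\sigma^2$, and Lemma~\ref{l:accept}.
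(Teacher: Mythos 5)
Your proof is correct and follows essentially the same route as the paper: exploit the independence-sampler structure at stationarity ($u\sim\mu^\eps$, $v\sim\nu$ independent), use $m=0$ and the expansion $\tfrac{1}{2\eps}-\tfrac{1}{2\sigma^2}=-6+\bigo(\eps)$ from \eqref{e:sig_scalar} together with the moment bounds $\E^{\mu^\eps}[u^2]=\eps+\bigo(\eps^2)$, $\E^{\mu^\eps}[u^4]=3\eps^2+\bigo(\eps^3)$ (and their Gaussian counterparts) to get $\E[|T|]\le 3\eps+6\eps+3\eps+6\eps+\bigo(\eps^2)$, then invoke Lemma~\ref{l:accept}. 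The only cosmetic difference is your rescaling $x=\sqrt{\eps}\,\xi$, which repackages the same moment estimates the paper records in Section~\ref{e:accept_details}.
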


\begin{proof}
  First we estimate $\E[|T|]$, then we apply Lemma \ref{l:accept}.
  Since we are considering the case of the independence sampler, and
  have reached stationarity, we may take $u \sim \mu^\eps$ and $v \sim
  \nu$ to be independent.  Then, taking $m=0$ and $\sigma^2$ given by
  \eqref{e:sig_scalar},
  \begin{equation*}
    \begin{split}
      \E[|T|]&\leq\E^{\mu^\eps}\left[\tfrac{1}{\eps}u^4 + (6 +
        \bigo(\eps))u^2 \right] + \E^{\nu}\left[\tfrac{1}{\eps}v^4 +
        (6 + \bigo(\eps))v^2
      \right]\\
      &\leq 3\eps + 6 \eps + 3\eps + 6 \eps + \bigo(\eps^2).
    \end{split}
  \end{equation*}
  Details of the moment estimates are given in Section
  \ref{e:accept_details}.  The result is now obvious.
\end{proof}

\begin{proposition}
  \label{p:nonoptimizednu}
  Assume that $\mu^\eps$ is sampled using Algorithm \ref{a1} with
  $\beta=1$, and that it has reached stationarity.  Then $\E[1\wedge
  e^T] \lesssim\eps^{1/2}$.
\end{proposition}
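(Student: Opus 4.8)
The plan is to bound the expected acceptance rate $\E[1\wedge e^{T}]$ from above directly, exploiting the precise algebraic form of $T$ when the proposal is the prior. With $m=0$ and $\sigma=1$, formula \eqref{e:Tscalar} reads $T(u,v)=\Delta(u)-\Delta(v)$ where $\Delta(x):=\eps^{-1}V(x)-\tfrac12 x^{2}=\eps^{-1}x^{4}+(\tfrac{1}{2\eps}-\tfrac12)x^{2}$, i.e. $\Delta=\Phm$ written relative to $\mu_{0}$. Taking $\beta=1$ turns Algorithm \ref{a1} into an independence sampler, so at stationarity the current state and the proposal are independent with $u\sim\mu^{\eps}$ and $v\sim\mu_{0}=N(0,1)$, and the quantity to estimate is $\E[1\wedge e^{T(u,v)}]$ over this product law. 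The naive bound $\E[1\wedge e^{T}]\le\E[e^{T}]$ is worthless here, since $e^{T}$ is exactly the Metropolis likelihood ratio and $\E[e^{T}]=1$; the $\eps^{1/2}$ gain must come from the truncation. The device is the elementary inequality $1\wedge e^{t}\le e^{t/2}$, valid for all real $t$, which yields $\E[1\wedge e^{T}]\le\E[e^{T/2}]$, and — crucially — the right-hand side \emph{factorizes} by independence as $\E^{\mu^{\eps}}\!\big[e^{\Delta(u)/2}\big]\cdot\E^{\mu_{0}}\!\big[e^{-\Delta(v)/2}\big]$.

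Next I would compute the two factors, where a cancellation peculiar to the quartic potential \eqref{e:scalar_potential} is the point. Using $\mu^{\eps}(dx)=Z_{\eps}^{-1}e^{-\eps^{-1}V(x)}dx$, the first factor equals $Z_{\eps}^{-1}\int_{\R}e^{-\tfrac{1}{2\eps}V(x)-\tfrac14 x^{2}}dx\le Z_{\eps}^{-1}\widetilde Z_{\eps}$, where $\widetilde Z_{\eps}:=\int_{\R}e^{-\tfrac{1}{2\eps}V(x)}dx$: the quartic terms cancel exactly and the leftover $x^{2}$ coefficient is negative. Using $\mu_{0}(dv)=(2\pi)^{-1/2}e^{-v^{2}/2}dv$, the second factor equals $(2\pi)^{-1/2}\int_{\R}e^{-\tfrac{1}{2\eps}V(v)-\tfrac14 v^{2}}dv\le(2\pi)^{-1/2}\widetilde Z_{\eps}$, by the same cancellation. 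Both integrands decay like $e^{-c\eps^{-1}x^{4}}$ at infinity, so $\E[e^{T/2}]<\infty$, which legitimises the manipulation; and we arrive at $\E[1\wedge e^{T}]\le \widetilde Z_{\eps}^{2}\big/(\sqrt{2\pi}\,Z_{\eps})$.

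Finally I would bound the two normalisers by elementary Gaussian estimates, avoiding full Laplace asymptotics. Dropping $x^{4}\ge0$ gives $\widetilde Z_{\eps}\le\int_{\R}e^{-x^{2}/(4\eps)}dx=2\sqrt{\pi\eps}$, hence $\widetilde Z_{\eps}^{2}\le 4\pi\eps$. For a lower bound on $Z_{\eps}$, restrict the integral to $|x|\le\sqrt{\eps}$; for $\eps\le1$ one has $V(x)=x^{4}+\tfrac12 x^{2}\le\tfrac32 x^{2}\le\tfrac32\eps$ there, so $Z_{\eps}\ge\int_{|x|\le\sqrt{\eps}}e^{-3/2}\,dx=2e^{-3/2}\sqrt{\eps}$. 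Combining, $\E[1\wedge e^{T}]\le \sqrt{2\pi}\,e^{3/2}\,\eps^{1/2}$ for all $\eps\le1$, which is the asserted $\lesssim\eps^{1/2}$. I expect the only genuinely delicate point — worth spelling out rather than skipping — to be the first paragraph's observation that the trivial bound gives nothing and one must pass to a fractional power $e^{\theta T}$, $\theta\in(0,1)$, to break the likelihood‑ratio identity; once $\theta=\tfrac12$ is inserted, the exact quartic cancellation does the work and the $\eps^{1/2}$ is simply the ratio of the $O(\sqrt{\eps})$ masses $\widetilde Z_{\eps}^{2}$ and $Z_{\eps}$.
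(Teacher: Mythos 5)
Your proof is correct, but it takes a genuinely different route from the paper's. The paper first replaces $\mu^\eps$ by the surrogate Gaussian $N(0,\eps)$ via the pointwise bound $e^{-\eps^{-1}V(x)}\le e^{-x^2/2\eps}$, then splits $\E[1\wedge e^T]$ into $\E[e^T 1_{T<0}]+\bbP[T\ge 0]$, uses the factorization of $T$ (so that its sign is determined by $\tilde u^2$ versus $v^2$), and evaluates both pieces exactly as chi-squared integrals, each equal to $\tfrac{2}{\pi}\arctan(\eps^{1/2})$; this yields the sharp leading constant $\tfrac{4}{\pi}\eps^{1/2}$. You instead use $1\wedge e^t\le e^{t/2}$ together with the independence of $u\sim\mu^\eps$ and $v\sim\mu_0$ at stationarity to factorize $\E[e^{T/2}]$ into a product of two one-dimensional integrals; this is exactly the squared Hellinger affinity $\bigl(\int\sqrt{d\mu^\eps\,d\mu_0}\bigr)^2$ between target and proposal, a completely general upper bound on the mean acceptance rate of an independence sampler. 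Your route is more elementary (no surrogate measure, no explicit $\chi^2$ computations) and more portable, reducing everything to an upper bound on $\widetilde Z_\eps$ and a lower bound on $Z_\eps$, which your crude Gaussian estimates handle; the price is a worse constant ($\sqrt{2\pi}\,e^{3/2}$ versus $4/\pi$), which is irrelevant for the stated $\lesssim\eps^{1/2}$. Two minor remarks: the ``cancellation peculiar to the quartic potential'' is not actually special to \eqref{e:scalar_potential} --- pairing $e^{\Delta/2}$ with the density of $\mu^\eps$ always leaves $e^{-\frac{1}{2\eps}V(x)-\frac14 x^2}$ for any $V$, which is just the Hellinger structure at work; and your opening observation that $\E[e^{T}]=1$ forces the passage to a fractional power is exactly the right diagnosis of where the $\eps^{1/2}$ must come from.
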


\begin{proof}
  The strategy is to make estimates using a Gaussian in place of
  $\mu^\eps$.  Let $\tilde\mu^\eps = N(0, \eps)$, and when denote
  $\tilde u \sim \tilde\mu^\eps$ to distinguish it form $\mu^\eps$.
  Then, since $1 \wedge e^T\geq 0$,
  \begin{equation*}
    \E[1\wedge e^T] = \E[1\wedge e^{T(u,v)}] \leq
    \frac{\sqrt{2\pi \eps}}{Z_\eps}\E[1\wedge
    e^{T(\tilde u,v)}] = (1+\bigo(\eps))\E[1\wedge
    e^{T(\tilde u,v)}].
  \end{equation*}
  The estimate of $Z_\eps$ is given in Section \ref{e:accept_details},
  and
  \begin{equation}
    \label{e:accept1}
    \E[1\wedge
    e^{T(\tilde u,v)}]  = \E[
    e^{T(\tilde u,v)} 1_{{T(\tilde u,v)}<0}]  + \bbP[{{T(\tilde u,v)}\geq 0}].
  \end{equation}
  Observe now that \eqref{e:Tscalar} can be factored, and for $m=0$,
  $\sigma =1$, which is the case here,
  \begin{equation*}
    {T(\tilde u,v)} = (\tilde u^2-v^2) \left(\tfrac{1}{\eps}(\tilde u^2+v^2) + \tfrac{1-\eps}{2\eps}\right).
  \end{equation*}
  For $\eps <1$, $T\gtrless 0$ if and only if $\tilde u^2 \gtrless
  v^2$.  Using explicit integration, detailed in Section
  \ref{e:accept_details}, $\bbP[{T(\tilde u,v)\geq 0}] =
  \tfrac{2}{\pi}\arctan(\eps^{1/2})$.  For the other term in
  \eqref{e:accept1}, since the expectation is over the region $u^2 <
  v^2$, $ {T(\tilde u,v)}\leq \tfrac{1-\eps}{2\eps} (\tilde u^2-v^2),$
  so that
  \begin{equation*}
    \E[
    e^{T(\tilde u,v)}1_{{T(\tilde u,v)}<0}]\leq \E[
    \exp\left\{ \tfrac{1-\eps}{2\eps} (\tilde u^2-v^2) \right
    \}1_{\tilde u^2< v^2}]=  \tfrac{2}{\pi}\arctan(\eps^{1/2}).
  \end{equation*}
\end{proof}

\begin{proposition}
  \label{p:nonoptimizednubeta}
  Assume that $\mu^\eps$ is sampled using Algorithm \ref{a1} with
  $\beta = \eps<1$, and that it has reached stationarity.  Then
  $\E[|T|]\lesssim \eps^{1/2}$, and for any fixed $\gamma>0$,
  $\lim_{\eps \to 0}\E[1\wedge e^T] \geq e^{-\gamma}$.
\end{proposition}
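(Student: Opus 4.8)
The plan is to follow the same two-step strategy as in Propositions \ref{p:optimizednu} and \ref{p:nonoptimizednu}: first show $\E[|T|]\lesssim\eps^{1/2}$, then invoke Lemma \ref{l:accept}. The essential new feature, compared with the independence sampler treated in Proposition \ref{p:nonoptimizednu}, is that the Crank--Nicolson proposal with $\beta=\eps$ keeps $v$ within $O(\eps)$ of the current state $u$, so that the differences $u^2-v^2$ and $u^4-v^4$ are small and absorb the $\eps^{-1}$ prefactors in $T$.

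First I would specialize \eqref{e:Tscalar}. Algorithm \ref{a1} uses the prior $\mu_0=N(0,1)$, so $m=0$ and $\sigma=1$, giving
\[
T(u,v)=\tfrac1\eps\bigl(u^4-v^4\bigr)+\Bigl(\tfrac{1}{2\eps}-\tfrac12\Bigr)\bigl(u^2-v^2\bigr).
\]
At stationarity $u\sim\mu^\eps$, and since $m_0=0$ and $\beta=\eps$ the proposal is $v=\sqrt{1-\eps^2}\,u+\eps\xi$ with $\xi\sim N(0,1)$ independent of $u$. From $0\le 1-\sqrt{1-\eps^2}\le\eps^2$ one obtains the pointwise bounds $|v-u|\le\eps^2|u|+\eps|\xi|$, $|u+v|\le 2|u|+\eps|\xi|$, and $u^2+v^2\le 3u^2+2\eps^2\xi^2$ for $\eps$ small.

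Next, writing $u^2-v^2=(u-v)(u+v)$ and $u^4-v^4=(u^2-v^2)(u^2+v^2)$ and substituting the bounds above, I would obtain, for $\eps$ small,
\[
\tfrac{1}{2\eps}\,|u^2-v^2|\ \lesssim\ \eps u^2+|u|\,|\xi|+\eps\xi^2,\qquad
\tfrac1\eps\,|u^4-v^4|\ \lesssim\ |u|^3|\xi|+\eps u^4+\eps u^2\xi^2+\cdots,
\]
and since $\tfrac{1}{2\eps}-\tfrac12\le\tfrac{1}{2\eps}$, the first bound also dominates the quadratic term of $T$. Taking expectations, using independence of $u$ and $\xi$ together with the moment bounds $\E^{\mu^\eps}[|u|^k]\lesssim\eps^{k/2}$ (which follow from the rescaling $x=\eps^{1/2}y$, under which $\mu^\eps$ has density proportional to $\exp(-\eps y^4-\tfrac12 y^2)$, and which are recorded in Section \ref{e:accept_details}), every term is at most of order $\eps^{1/2}$: the quartic contributes only $\E[|u|^3|\xi|]\lesssim\eps^{3/2}$, while the genuinely dominant term is $\E[|u|\,|\xi|]=\E[|u|]\,\E[|\xi|]\lesssim\eps^{1/2}$. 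Hence $\E[|T|]\lesssim\eps^{1/2}$.

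Finally, Lemma \ref{l:accept} gives, for any fixed $\gamma>0$, $\E[1\wedge e^T]\ge e^{-\gamma}\bigl(1-\gamma^{-1}\E[|T|]\bigr)$; letting $\eps\to0$ and using $\E[|T|]\to0$ yields $\lim_{\eps\to0}\E[1\wedge e^T]\ge e^{-\gamma}$, as claimed. The one mildly delicate point, and the place where care is needed, is the bookkeeping in the expansion: one must see transparently that the leading cancellation $v-u=O(\eps)$ really does beat the $\eps^{-1}$ in front of both the quartic and the quadratic terms, and that each polynomial-in-$u$ weight that appears is integrated against $\mu^\eps$ with the advertised power of $\eps$. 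Both are routine given the moment estimates, so I expect no genuinely new analytic difficulty beyond what entered Propositions \ref{p:optimizednu} and \ref{p:nonoptimizednu}.
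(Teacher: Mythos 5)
Your proposal is correct and follows essentially the same route as the paper: exploit the $O(\eps)$ proximity of the Crank--Nicolson proposal $v=\sqrt{1-\eps^2}\,u+\eps\xi$ to $u$, bound $\E[|T|]$ via the factored differences $u^2-v^2$ and $u^4-v^4$ together with the moment estimates of $\mu^\eps$, and then apply Lemma \ref{l:accept}. The only (immaterial) difference is that you expand the products pointwise and use independence of $u$ and $\xi$ term by term, whereas the paper bounds $\E[|u^2-v^2|]$ and $\E[|u^4-v^4|]$ by Cauchy--Schwarz; both yield the dominant contribution $\E[|u|\,|\xi|]\lesssim\eps^{1/2}$ from the quadratic term.
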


\begin{proof}
  Now the proposal $v$ depends on $u$, according to
  \begin{gather*}
    v = \sqrt{1-\eps^2} u + \eps \xi, \quad \xi \sim \mu_0\\
    v - u = (\sqrt{1-\eps^2}-1) u + \eps \xi.
  \end{gather*}
  Notice that for small $\eps$, $\sqrt{1-\eps^2}-1 =
  -\tfrac{1}{2}\eps^2 + \big(\eps^4)$.  The idea is to use continuity
  of the functional, since $v$ is close to $u$, to get an upper bound
  on $\E|T|$, and then apply Lemma \ref{l:accept}.  Using conditioning
  and estimates of the moments found in Section
  \ref{e:accept_details},
  \begin{equation*}
    \begin{split}
      \E|T|&\leq \eps^{-1} \E[|u^4 - v^4|] +
      \tfrac{1-\eps}{2\eps}\E[|u^2-v^2|] \\
      &\lesssim \eps^{-1} \eps^{9/4} + \eps^{-1} \eps^{3/2}=
      \eps^{5/4} + \eps^{1/2}.
    \end{split}
  \end{equation*}
\end{proof}

Note that Algorithm \ref{a1} is equivalent to Algorithm \ref{a2}, when
$\nu = \mu_0$. However the preceding three propositions show the
advantages that result from use of Algorithm \ref{a2} when using a
well-chosen $\nu$. In particular the independence sampler ($\beta=1$)
accepts at rate which is $\epsilon$ independent, resulting in rapid
decorrelation of the Markov chain. In contrast, Algorithm \ref{a1}
with $\beta=1$ has acceptance probability which degenerates as
$\epsilon \to 0$, inducing slow decorrelation in the Markov chain; an
${\cal O}$(1) acceptance probability can be achieved for Algorithm
\ref{a1}, but this requires choosing $\beta={\cal O}(1)$, also
inducing slow decorrelation. In summary the results demonstrate
analytically the advantages of using Algorithm \ref{a2}.

\subsection{Details of the Acceptance Rate Estimates}
\label{e:accept_details}

\subsubsection{Moment Estimates}
Moments of $\mu^\eps$ are needed, which can be estimated using the
bound
\begin{equation}
  \left(1 - \frac{1}{\eps} x^4\right)  \exp\left(-\frac{x^2}{2\eps}\right)\leq \exp\left(-\frac{1}{\eps}
    V(x)\right)\leq \exp\left(-\frac{x^2}{2\eps}\right).
\end{equation}
We can then estimate the partition function and the moments:
\begin{subequations}
  \begin{align}
    Z_\eps &= \sqrt{2\pi \eps}(1  + \bigo(\eps)),\\
    \E^{\mu^\eps}[u^2] &= \eps + \bigo(\eps^2),\\
    \E^{\mu^\eps}[u^4] &= 3\eps^2 + \bigo(\eps^3),\\
    \E^{\mu^\eps}[u^6] &= 15\eps^3 + \bigo(\eps^4).
  \end{align}
\end{subequations}

\subsubsection{Upper Bound Estimates}
In the proof of Proposition \ref{p:nonoptimizednu}, the two terms in
\eqref{e:accept1} can be integrated explicitly.  This is done by
letting $V = v^2$ and $W = \tilde u^2/\eps$, so that $V$ and $W$ are
independent $\chi^2$ variables.  Then $T\geq 0$ corresponds to $W\geq
V/\eps$, and
\begin{equation*}
  \begin{split}
    \bbP[{T(\tilde u, v)\geq 0}] &= \int_{V=0}^{\infty} \left\{\int_{W
        = V/\eps}^{\infty}\chi^2(dW) \right\}\chi^2(dV)\\
    & = \int_{V=0}^{\infty}
    \left\{\Erfc\left(\sqrt{\tfrac{V}{2\eps}}\right)\right\}\chi^2(dV)=\tfrac{2}{\pi}\arctan(\eps^{1/2}).
  \end{split}
\end{equation*}
Analogously,
\begin{equation*}
  \begin{split}
    &\E\left[\exp\left\{ \tfrac{1-\eps}{2\eps}(\eps W -
        V)\right\}1_{T(\tilde u,v)<0}\right]\\
    &=\int_{V=0}^{\infty} \left \{\int_{W = 0}^{V/\eps} \exp\left\{
        \tfrac{1-\eps}{2\eps}(\eps W -
        V)\right\}\chi^2(dW)\right \}\chi^2(dV)  \\
    & =\int_{V=0}^{\infty}\left\{\exp
      \left\{-\tfrac{1-\eps}{2\eps}V\right\}\eps^{-1/2}\Erf\right(\sqrt{\tfrac{V}{2}}
    \left) \right\}\chi^2(dV) =\frac{2}{\pi}\arctan(\eps^{1/2}).
  \end{split}
\end{equation*}

\subsubsection{Estimates for Crank-Nicolson Proposals}
The last quantities we need are the differences appearing in the proof
of Proposition \ref{p:nonoptimizednubeta}:
\begin{subequations}
  \begin{align}
    \E[|u^2-v^2|]&\leq \sqrt{\E[|u+v|^2]}\sqrt{\E[|u-v|^2]},\\
    \E[|u^4-v^4|]&\leq \sqrt{\E[|u^3+u^2v + uv^2
      +v^3|^2]}\sqrt{\E[|u^2-v^2|]}.
  \end{align}
\end{subequations}
Using the definition of the proposal $v$ and the estimates of the
moments of $\mu^\eps$,
\begin{equation*}
  \begin{split}
    \sqrt{\E[|u-v|^2]} &= \sqrt{\E[|(\sqrt{1-\eps^2}-1) u + \eps
      \xi|^2]}\\
    &\leq (\tfrac{1}{2}\eps^2 + \bigo(\eps^4) )
    \sqrt{\E^{\mu^{\eps}}[u^2]} + \eps \sqrt{\E^{\mu_0}[\xi^2]}\\
    &\leq \tfrac{1}{2}\eps^{5/2} + \eps + \bigo(\eps^{7/2}),
  \end{split}
\end{equation*}
and
\begin{equation*}
  \begin{split}
    \sqrt{\E[|u+v|^2]} &= \sqrt{\E[|(\sqrt{1-\eps^2}+1) u + \eps
      \xi|^2]}\\
    &\leq (2 + \bigo(\eps^2) )
    \sqrt{\E^{\mu^{\eps}}[u^2]} + \eps \sqrt{\E^{\mu_0}[\xi^2]}\\
    &\leq 2\eps^{1/2} + \eps + \bigo(\eps^{3/2}).
  \end{split}
\end{equation*}
Consequently, $\E[|u^2-v^2|] \leq 2 \eps^{3/2} + \bigo(\eps^2)$.  The
cubic term can be bounded as
\begin{equation*}
  \begin{split}
    \sqrt{\E[|u^3+u^2v + uv^2 +v^3|^2]}& \leq  \sqrt{\E[u^6]} +\sqrt{\E[u^4v^2]}+ \sqrt{\E[u^2v^4]}+ \sqrt{\E[v^4]}\\
    & \leq \E[u^6]^{1/2} + \E[u^6]^{1/3}\E[v^6]^{1/6}\\
    &\quad + \E[u^6]^{1/6}\E[v^6]^{1/3}+\E[v^6]^{1/2}.
  \end{split}
\end{equation*}
Thus, the final estimate is
\begin{equation*}
  \begin{split}
    \E[v^6]^{1/6} =\E[|(\sqrt{1-\eps^2}) u + \eps \xi|^6]^{1/6} &\leq
    (1+\bigo(\eps))\E^{\mu^\eps}[u^6]^{1/6} + \eps
    \E^{\mu_0}[v^6]^{1/6}\\
    &\leq (1+\bigo(\eps))((15)^{1/6} \eps^{1/2} + \bigo(\eps^{3/2})) +
    \eps\\
    &\quad = 15^{1/6} \eps^{1/2} + \eps + \bigo(\eps^{3/2}).
  \end{split}
\end{equation*}
Therefore, $\E[|u^4-v^4|]\lesssim \eps^{3/2} \cdot \eps^{3/4} =
\eps^{9/4}$.

\section{Sample Generation}
\label{a:samples}
In this section of the appendix we briefly comment on how samples were
generated to estimate expectations and perform pCN sampling of the
posterior distributions.  Three different methods were used

\subsection{Bayesian Inverse Problem}
For the Bayesian inverse problem presented in Section \ref{ssec:BIP},
samples were drawn from $N(0, C)$, where $C$ was a finite rank
perturbation of $C_0$, $C_0^{-1} = \delta^{-1} (-d^2/dx^2)$ equipped
with periodic boundary conditions on $[0,1)$.  This was accomplished
using the Karhunen Lo{\`e}ve series expansion (KLSE) and the fast
Fourier transform (FFT).  Observe that the spectrum of $C_0$ is:
\begin{equation}
  \varphi_n(x) = \begin{cases}
    \sqrt{2}\sin(2\pi \tfrac{n+1}{2} x) & \text{$n$ odd},\\
    \sqrt{2}\cos(2\pi \tfrac{n}{2} x) & \text{$n$ even},
  \end{cases}, \quad \lambda_n^2 = \begin{cases}
    \frac{\delta}{(2\pi \tfrac{n+1}{2})^2}& \text{$n$ odd},\\
    \frac{\delta}{(2\pi \tfrac{n}{2})^2}& \text{$n$ even}.
  \end{cases}
\end{equation}
Let ${\bf x}^n$ and $\mu_n^2$ denote the normalized eigenvectors and
eigenvalues of matrix $B$ of rank $K$. Then if $u \sim N(0,C)$, $\xi_n
\sim N(0,1)$, i.i.d., the KLSE is:
\begin{equation}
  u = \sum_{\ell =1}^K \left\{\sum_{n=1}^K{\mu_n} \xi_n
    x_\ell^n\right\}\varphi_{\ell}(x)+\sum_{\ell = K+1}^\infty{\lambda_\ell} \xi_\ell\varphi_\ell(x)
\end{equation}
Truncating this at some index, $N>K$, we are left with a trigonometric
polynomial which can be evaluated by FFT.  This will readily adapt to
problems posed on the $d$-dimensional torus.

\subsection{Conditioned Diffusion with Constant Potential}
For the conditioned diffusion in Section \ref{ssec:CDP}, the case of
the constant potential $B$ can easily be treated, as this corresponds
to an Ornstein-Uhlenbeck (OU) bridge.  Provided $B>0$ is constant, we
can associate to $N(0, C)$ the conditioned OU bridge:
\begin{equation}
  dy_t = \eps^{-1} \sqrt{B} y_t dt + \sqrt{2} dw_t, \quad y_0= y_1 = 0,
\end{equation}
and the unconditioned OU process
\begin{equation}
  dz_t = \eps^{-1} \sqrt{B} z_t dt + \sqrt{2} dw_t, \quad z_0= 0.
\end{equation}
Using the relation
\begin{equation}
  y_t = z_t - \frac{\sinh(\sqrt{B}t/\eps)}{\sinh(\sqrt{B}/\eps)}z_1,
\end{equation}
if we can generate a sample of $z_t$, we can then sample from
$N(0,C)$.  This is accomplished by picking a time step $\Delta t>0$,
and then iterating:
\begin{equation}
  z_{n+1} = \exp\{-\eps^{-1} \sqrt{B} \Delta t\}z_n + \eta_n, \quad
  \eta_n \sim N(0,\eps/\sqrt{B} (1-\exp(-2\eps^{-1} \sqrt{B} \Delta t)).
\end{equation}
Here, $z_0 = 0$, and $z_n\approx z_{n \Delta t}$.  This is highly
efficient and generalizes to $d$-dimensional diffusions.

\subsection{Conditioned Diffusion with Variable Potential}
Finally, for the conditioned diffusion with a variable potential
$B(t)$, we observe that for the Robbins-Monro algorithm, we do not
need the samples themselves, but merely estimates of the expectations.
Thus, we employ a change of measure so as to sample from a constant
$B$ problem, which is highly efficient.  Indeed, for any observable
$\mathcal{O}$,
\begin{equation}
  \E^{\nu_0}[\mathcal{O}] = \E^{\bar\nu} [\mathcal{O}\tfrac{d\nu_0}{d\bar\nu}]=\frac{\E^{\bar\nu} [\mathcal{O}\exp(-\Psi)]}{\E^{\bar\nu} [\exp(-\Psi)]}
\end{equation}
Formally,
\begin{equation}
  \frac{d\nu_0}{d\bar\nu} \propto \exp\left\{- \frac{1}{4\eps^2}\int_0^1
    (B(t) - \bar B) z_t^2 dt\right\},
\end{equation}
and we take $\bar B = \max_t B(t)$ for stability.

For pCN sampling we need actual samples from $N(0,C)$.  We again use a\\
Karhunen-Lo{\`e}ve series expansion, after discretizing the precision
operator $C^{-1} = C_0^{-1} +B(t)$ with appropriate boundary
conditions, and computing its eigenvalues and eigenvectors.  While
this computation is expensive, it is only done once at the beginning
of the posterior sampling algorithm.

\vspace{0.1in}
\noindent{\bf Acknowledgements} AMS is grateful to EPSRC, ERC and ONR
for financial support. He is also grateful to Folkmar Bornemann for
helpful discussions concerning paramaterization of the covariance
operator.

\noindent FJP would like to acknowledge the hospitality of the
University of Warwick during his stay. 

\noindent GS was supported in part by DOE Award DE-SC0002085 and NSF
PIRE Grant OISE-0967140.

\noindent HW was supported by an EPSRC First Grant.

\vspace{0.1in}

\bibliographystyle{siam} \bibliography{KL}

\def\cprime{$'$}
\begin{thebibliography}{10}

\bibitem{archambeau2007gaussian}
{\sc C.~Archambeau, D.~Cornford, M.~Opper, and J.~Shawe-Taylor}, {\em Gaussian
  process approximations of stochastic differential equations}, Journal of
  Machine Learning Research, 1 (2007), pp.~1--16.

\bibitem{Asmussen:2010aa}
{\sc S.~Asmussen and P.~W. Glynn}, {\em Stochastic Simulation}, Springer, 2010.

\bibitem{Beskos:2009vx}
{\sc A.~Beskos, G.~Roberts, and A.~Stuart}, {\em {Optimal scalings for local
  Metropolis{\textendash}Hastings chains on nonproduct targets in high
  dimensions}}, Annals of Applied Probability, 19 (2009), pp.~863--898.

\bibitem{bishop2006pattern}
{\sc C.~M. Bishop and N.~M. Nasrabadi}, {\em {Pattern Recognition and Machine
  Learning}}, vol.~1, {Springer New York}, 2006.

\bibitem{Blum:1954tf}
{\sc J.~R. Blum}, {\em {Approximation methods which converge with probability
  one}}, Annals of Mathematical Statistics, 25 (1954), pp.~382--386.

\bibitem{Bog}
{\sc V.~I. Bogachev}, {\em Gaussian measures}, vol.~62 of Mathematical Surveys
  and Monographs, American Mathematical Society, Providence, RI, 1998.

\bibitem{Byrd:2012bd}
{\sc R.~H. Byrd, G.~M. Chin, J.~Nocedal, and Y.~Wu}, {\em {Sample size
  selection in optimization methods for machine learning}}, Mathematical
  Programming, 134 (2012), pp.~127--155.

\bibitem{Conrad:2014vc}
{\sc P.~R. Conrad, Y.~M. Marzouk, N.~S. Pillai, and A.~Smith}, {\em
  {Asymptotically Exact MCMC Algorithms via Local Approximations of
  Computationally Intensive Models}}, arXiv.org,  (2014).

\bibitem{CRSW13}
{\sc S.~L. Cotter, G.~O. Roberts, A.~M. Stuart, and D.~White}, {\em {MCMC}
  methods for functions: modifying old algorithms to make them faster},
  Statistical Science, 28 (2013), pp.~424--446.

\bibitem{Dvoretzky:1986ca}
{\sc A.~Dvoretzky}, {\em {Stochastic approximation revisited}}, Advances in
  Applied Mathematics, 7 (1986), pp.~220--227.

\bibitem{ElMoselhy:2012hn}
{\sc T.~A. El~Moselhy and Y.~M. Marzouk}, {\em {Bayesian inference with optimal
  maps}}, Journal Of Computational Physics, 231 (2012), pp.~7815--7850.

\bibitem{Flath:2011gm}
{\sc H.~P. Flath, L.~C. Wilcox, V.~Ak{\c c}elik, J.~Hill, B.~van
  Bloemen~Waanders, and O.~Ghattas}, {\em {Fast algorithms for Bayesian
  uncertainty quantification in large-scale linear inverse problems based on
  low-rank partial Hessian approximations}}, SIAM Journal on Scientific
  Computing, 33 (2011), pp.~407--432.

\bibitem{Gershgorin:2012hu}
{\sc B.~Gershgorin and A.~J. Majda}, {\em Quantifying uncertainty for climate
  change and long-range forecasting scenarios with model errors. part i:
  {G}aussian models}, Journal of Climate, 25 (2012), pp.~4523--4548.

\bibitem{girolami2011riemann}
{\sc M.~Girolami and B.~Calderhead}, {\em Riemann manifold {L}angevin and
  {H}amiltonian {M}onte {C}arlo methods}, Journal of the Royal Statistical
  Society: Series B (Statistical Methodology), 73 (2011), pp.~123--214.

\bibitem{hairer2011spectral}
{\sc M.~Hairer, A.~Stuart, and S.~Vollmer}, {\em Spectral gaps for a
  metropolis-hastings algorithm in infinite dimensions}, Ann. Appl. Prob. to
  appear; arXiv:1112.1392,  (2014).

\bibitem{HSV11}
{\sc M.~Hairer, A.~Stuart, and J.~Voss}, {\em Signal processing problems on
  function space: {B}ayesian formulation, stochastic {PDEs} and effective
  {MCMC} methods}, in The Oxford Handbook of Nonlinear Filtering, D.~Crisan and
  B.~Rozovsky, eds., Oxford University Press, 2011, pp.~833--873.

\bibitem{Higham:1988jg}
{\sc N.~J. Higham}, {\em {Computing a nearest symmetric positive semidefinite
  matrix}}, Linear Algebra and its Applications, 103 (1988), pp.~103--118.

\bibitem{Hinze:2009aa}
{\sc M.~Hinze, R.~Pinnau, M.~Ulbrich, and S.~Ulbrich}, {\em {Optimization with
  PDE Constraints}}, Springer, 2009.

\bibitem{katsoulakis2013information}
{\sc M.~A. Katsoulakis and P.~Plech{\'a}{\v{c}}}, {\em Information-theoretic
  tools for parametrized coarse-graining of non-equilibrium extended systems},
  The Journal of Chemical Physics, 139 (2013), p.~074115.

\bibitem{Kushner:2003aa}
{\sc H.~J. Kushner and G.~Yin}, {\em Stochastic Approximation and Recursive
  Algorithms and Applications}, Springer, 2003.

\bibitem{Martin:2012fj}
{\sc J.~Martin, L.~C. Wilcox, C.~Burstedde, and O.~Ghattas}, {\em {A stochastic
  Newton MCMC method for large-scale statistical inverse problems with
  application to seismic inversion}}, SIAM Journal on Scientific Computing, 34
  (2012), pp.~A1460--A1487.

\bibitem{Marzouk:2007vi}
{\sc Y.~M. Marzouk, H.~N. Najm, and L.~A. Rahn}, {\em {Stochastic spectral
  methods for efficient Bayesian solution of inverse problems}}, Journal Of
  Computational Physics,  (2007).

\bibitem{Pasupathy:2011cs}
{\sc R.~Pasupathy and S.~Kim}, {\em {The stochastic root-finding problem}}, ACM
  Transactions on Modeling and Computer Simulation, 21 (2011), pp.~1--23.

\bibitem{PSSW13}
{\sc F.~J. Pinski, G.~Simpson, A.~M. Stuart, and H.~Weber}, {\em
  Kullback-{L}eibler approximation for probability measures on infinite
  dimensional spaces}, {\tt http://arxiv.org/abs/1310.7845},  (2013).

\bibitem{reznikoff2005invariant}
{\sc M.~G. Reznikoff and E.~Vanden-Eijnden}, {\em Invariant measures of
  stochastic partial differential equations and conditioned diffusions},
  Comptes Rendus Mathematique, 340 (2005), pp.~305--308.

\bibitem{Robbins:1950ua}
{\sc H.~Robbins and S.~Monro}, {\em {A stochastic approximation method}}, The
  Annals of Mathematical Statistics,  (1950).

\bibitem{Shell:2008cj}
{\sc M.~S. Shell}, {\em {The relative entropy is fundamental to multiscale and
  inverse thermodynamic problems}}, The Journal of Chemical Physics, 129
  (2008), p.~144108.

\bibitem{Spantini:2014tb}
{\sc A.~Spantini, A.~Solonen, T.~Cui, J.~Martin, L.~Tenorio, and Y.~Marzouk},
  {\em {Optimal low-rank approximations of Bayesian linear inverse problems}},
  arXiv.org,  (2014).

\bibitem{S10a}
{\sc A.~M. Stuart}, {\em Inverse problems: a {B}ayesian perspective}, in Acta
  Numerica 2010, vol.~19, Cambridge University Press, 2010, pp.~451--559.

\bibitem{Yin:1990wv}
{\sc G.~Yin and Y.~M. Zhu}, {\em {On H-valued Robbins-Monro processes}},
  Journal of multivariate analysis, 34 (1990), pp.~116--140.

\end{thebibliography}
\end{document}